\theoremstyle{plain}
\newtheorem{definition}[equation]{Definition}
\newtheorem{corollary}[equation]{Corollary}
\newtheorem{lemma}[equation]{Lemma}
\newtheorem{proposition}[equation]{Proposition}
\newtheorem{theorem}[equation]{Theorem}
\newtheorem{example}{Example}
\newtheorem{remark}[equation]{Remark}
\numberwithin{equation}{subsection}
\title{Deformations of nonsingular Poisson varieties and Poisson invertible sheaves}
\author{Chunghoon Kim}
\date{Februry 21, 2015}
\subjclass[2010]{14B10, 14B12, 17B63, 32G07, 53D17}
\thanks{The author was partially supported by NRF grant 2011-0027969.}
\address{Reserch Institute of Mathematics\\
               Seoul National University\\
               Seoul 151-747, Korea}
\email{ckim042@gmail.com}            
\begin{document}

\maketitle
\begin{abstract}
In this paper, we study deformations of nonsingular Poisson varieties, deformations of Poisson invertible sheaves and simultaneous deformations of nonsingular Poisson varieties and Poisson invertible sheaves, which extend flat deformation theory of nonsingular varieties and invertible sheaves. In an appendix, we study deformations of Poisson vector bundles. We identify first-order deformations and obstructions.
\end{abstract}

\tableofcontents

\section{Introduction}

In this paper, we study deformations of nonsingular Poisson varieties and Poisson invertible sheaves, which extend  flat deformation theory of algebraic schemes and invertible sheaves (see \cite{Ser06},\cite{Har10}). In other words, when we ignore Poisson structures, underlying deformation theory is exactly same to flat deformation theory of algebraic schemes and invertible sheaves. An algebraic Poisson scheme $X$ over $k$ is an algebraic scheme over $k$ whose structure sheaf $\mathcal{O}_X$ is a sheaf of Poisson $k$-algebras\footnote{For general information on Poisson geometry, we refer to \cite{Lau13}.}, where $k$ is an algebraically closed field with characteristic $0$.  Equivalently, a Poisson structure on an algebraic scheme $X$ is characterized by an element $\Lambda_0\in \Gamma(X,\mathscr{H}om_{\mathcal{O}_X}(\wedge^2\Omega_{X/k}^1,\mathcal{O}_X))$ with $[\Lambda_0,\Lambda_0]=0$, where $[-,-]$ is the Schouten bracket on $X$. In the sequel  we denote an algebraic Poisson scheme by $(X,\Lambda_0)$. It is known that infinitesimal deformations of a nonsingular variety $X$ is controlled by the tangent sheaf $T_X$ so that $H^1(X,T_X)$ represents first-order deformations and $H^2(X,T_X)$ represents obstructions (see \cite{Ser06} Proposition 1.2.9, Proposition 1.2.12). On the other hand, infinitesimal deformations of an invertible sheaf $L$ on a nonsingular variety $X$ is controlled by the structure sheaf $\mathcal{O}_X$ so that $H^1(X,\mathcal{O}_X)$ represents first-order deformations and $H^2(X,\mathcal{O}_X)$ represents obstructions (see \cite{Ser06} Theorem 3.3.1). In Poisson deformations of a nonsingular Poisson variety $(X,\Lambda_0)$,  the role of $T_X$ is replaced by degree-shifted (by $1$) truncated Lichnerowicz-Poisson complex $T_X^\bullet: T_X\to \wedge^2 T_X\to \cdots $ induced by $[\Lambda_0,-]$. We will denote the $i$-th hypercohomology group by $\mathbb{H}^i(X,\Lambda_0,T_X^\bullet)$\footnote{In \cite{Kim14},\cite{Kim15} , the author used the notation $HP^i(X,\Lambda_0)$ for the $i$-th hypercohomology group of unshifted truncated Lichnerowicz-Poisson complex $0\to T_X\to \wedge^2 T_X\to \wedge^3 T_X\to \cdots$ induced by $[\Lambda_0,-]$ in order to keep notational consistency with \cite{Nam09}, \cite{Gin04}. However the author decides to use the notation $\mathbb{H}^i(X,\Lambda_0, T_X^\bullet)$ to express the $i$-th hypercohomology group of shifted truncated Lichnerowicz-Poisson complex $T_X\to \wedge^2 T_X\to \wedge^3 T_X\to \cdots$ induced by $[\Lambda_0,-]$ since it looks more natural by the general philosophy of deformation theory.}. Then $\mathbb{H}^1(X,\Lambda_0,T_X^\bullet)$ represents first-order deformations and $\mathbb{H}^2(X,\Lambda_0,T_X^\bullet)$ represents obstructions (see Proposition \ref{3q}, Proposition \ref{lifting}). A  Poisson invertible sheaf $L$ is an invertible sheaf equipped with a flat Poisson connection $\nabla$. We will denote the Poisson invertible sheaf by $(L,\nabla)$ (see Definition \ref{pc}).  In Poisson deformations of a Poisson invertible sheaf $(L,\nabla)$, the role of $\mathcal{O}_X$ is replaced by Lichnerowicz-Poisson complex $\mathcal{O}_X^\bullet:\mathcal{O}_X\to T_X\to \wedge^2 T_X\to \cdots$ induced by $[\Lambda_0,-]$. We will denote the $i$-th hypercohomology group by $\mathbb{H}^i(X,\Lambda_0,\mathcal{O}_X^\bullet)$. Then $\mathbb{H}^1(X,\Lambda_0,\mathcal{O}_X^\bullet)$ represents first-order deformations of $(L,\nabla)$ and $\mathbb{H}^2(X,\Lambda_0,\mathcal{O}_X^\bullet)$ represents obstructions (see Proposition \ref{tpl}).

We will review simultaneous deformation theory of nonsingular varieties and invertible sheaves, and explain how the theory is extended in terms of simultaneous deformations of nonsingular Poisson varieties and Poisson invertible sheaves.  Let $X$ be a nonsingular projective variety and $L$ be an invertible sheaf on it. We can identify isomorphism classes of invertible sheaves on $X$ with the Picard group $H^1(X,\mathcal{O}_X^*)$. Then the image of $L$ under $H^1(X,\mathcal{O}_X^*)\to H^1(X,\Omega_X^1)$ induced by $d\log-$, where $d$ is the canonical derivation, defines the Chern class $c(L)\in H^1(X,\Omega_X^1)$, which  gives the Atiyah extension associated with $c(L)$,  $0\to \mathcal{O}_X\to \mathcal{E}_L^0\to T_X\to 0$. Then simultaneous deformations of a nonsingular projective variety $X$ and an invertible sheaf $L$ is controlled by the locally free sheaf $\mathcal{E}_L^0$ so that $H^1(X,\mathcal{E}_L^0)$ represents first-order deformations and $H^2(X,\mathcal{E}_L^0)$ represents obstructions (see \cite{Ser06} Theorem 3.3.11). Now assume that $(X,\Lambda_0)$ is a nonsingular projective Poisson variety over $\mathbb{C}$. We can identify isomorphism classes of Poisson invertible sheaves on $(X,\Lambda_0)$ with the Poisson Picard group $\mathbb{H}^1(X,\Lambda_0,\mathcal{O}_X^{*\bullet})$ which is the first hypercohomology group of the complex of sheaves $\mathcal{O}_X^{*\bullet} :\mathcal{O}_X^*\xrightarrow{[\Lambda_0,\log -]} T_X \xrightarrow{[\Lambda_0,-]} \wedge^2 T_X\xrightarrow{[\Lambda_0,-]}\cdots$ (see Definition \ref{pic}). We define the notion of the Poisson Chern class associated with a Poisson invertible sheaf $(L,\nabla)$ where the Chern class $c(L)$ of the invertible sheaf $L$ is realized as a component of the Poisson Chern class in the following way (see Definition \ref{pchern}). We have a morphism of complex of sheaves, $\mathcal{O}_X^{*\bullet} \to (\Omega_X^1\xrightarrow{i_{\Lambda_0}} T_X)$, which induces $\mathbb{H}^1(X,\Lambda_0,\mathcal{O}_X^{*\bullet})\to \mathbb{H}^1(\Omega_X^1\xrightarrow{i_{\Lambda_0}} T_X)$, where $i_{\Lambda_0}:\Omega_X^1\to T_X$ is the natural morphism induced by the bivector field $\Lambda_0\in H^0(X,\wedge^2 T_X)$ by contraction. We call the image of $(L,\nabla)$ the Poisson Chern class associated with $(L,\nabla)$ and denote it by $c(L,\nabla)$. Then the Poisson Chern class $c(L,\nabla)\in \mathbb{H}^1(\Omega_X^1\to T_X)$ gives the Poisson Atiyah extension $0\to \mathcal{O}_X^\bullet\to \mathcal{E}_L^\bullet\to T_X^\bullet \to 0$ which extends the  Atiyah extension $0\to \mathcal{O}_X\to \mathcal{E}_L^0\to T_X\to 0$ associated with the Chern class $c(L)$, where $\mathcal{E}_L^\bullet:\mathcal{E}_L^0\xrightarrow{d}\mathcal{E}_L^1\xrightarrow{d} \mathcal{E}_L^2\to \cdots$ is a complex of locally free sheaves and $d$ is induced by $[\Lambda_0,-]$ and the component of $c(L,\nabla)$ associated with $T_X$ (see Proposition \ref{patiyah}). In simultaneous deformations of a nonsingular projective Poisson variety $(X,\Lambda_0)$ and a Poisson invertible sheaf $(L,\nabla)$, the role of $\mathcal{E}_L^0$ in simultaneous deformations of a nonsingular projective variety $X$ and an invertible sheaf $L$ is replaced by $\mathcal{E}_L^\bullet:\mathcal{E}_L^0\xrightarrow{d}\mathcal{E}_L^1\xrightarrow{d}\mathcal{E}_L^1\to\cdots$.  We will denote the $i$-th hypercohomology group by $\mathbb{H}^i(X,\Lambda_0,\mathcal{E}_L^\bullet)$ so that $\mathbb{H}^1(X,\Lambda_0,\mathcal{E}_L^\bullet)$ represents first-order deformations and $\mathbb{H}^2(X,\Lambda_0,\mathcal{E}_L^\bullet)$ represents obstructions (see Proposition \ref{spo}).

In section \ref{section1}, we review the characterization of the Poisson structure of an algebraic Poisson scheme $X$ over a base scheme $S$ by an element $\Lambda\in \Gamma(X,\mathscr{H}om_{\mathcal{O}_X}(\wedge^2 \Omega_{X/S}^1,\mathcal{O}_X))$ with $[\Lambda,\Lambda]=0$ where $[-,-]$ is the Schouten bracket on $X$ over $S$ (see Remark \ref{remarkc}). We also review Lichnerowicz-Poisson complex and define degree-shifted (by $1$) truncated Lichnerowicz-Poisson complex with their cohomology groups (see Remark \ref{remarkd}).

In section \ref{section2}, we study deformations of algebraic Poisson schemes which extend flat deformation theory of algebraic schemes. We define the Poisson deformation functor $Def_{(X,\Lambda_0)}$ which is a functor of Artin rings for an algebraic Poisson scheme $(X,\Lambda_0)$ (see Definition \ref{definitionp}). We identify first order deformations of a nonsingular Poisson variety $(X,\Lambda_0)$ with $\mathbb{H}^1(X,\Lambda_0,T_X^\bullet)$ and obstructions with $\mathbb{H}^2(X,\Lambda_0,T_X^\bullet)$ (see Proposition \ref{3q}, Proposition \ref{lifting}). We show that for a nonsingular projective Poisson variety $(X,\Lambda_0)$ with $\mathbb{H}^0(X,\Lambda_0,T_X^\bullet)=0$, the Poisson deformation functor $Def_{(X,\Lambda_0)}$ is pro-representable (see Theorem \ref{ppro}).

In section \ref{section3}, we study Poisson invertible sheaves on an algebraic Poisson scheme $(X,\Lambda_0)$. A Poisson invertible sheaf $L$ on $(X,\Lambda_0)$ is an invertible sheaf equipped with a flat Poisson connection $\nabla$, which is denoted by $(L,\nabla)$ (see Definition \ref{pc}). The group of Poisson invertible sheaves on an algebraic Poisson scheme on $(X,\Lambda_0)$ can be identified with $\mathbb{H}^1(X,\Lambda_0,\mathcal{O}_X^{*\bullet})$ (see Definition \ref{pic}). A flat Poisson connection $\nabla$ on an invertible sheaf $L$ on a nonsingular Poisson variety $(X,\Lambda_0)$ is extended to a complex of sheaves $L^\bullet:L\xrightarrow{\nabla} T_X\otimes L\to \wedge^2 T_X\otimes L \to\cdots$ (see Remark \ref{lco}). We denote the $i$-th hypercohomology group by $\mathbb{H}^i(X,\Lambda_0,L^\bullet, \nabla)$. We define the notion of the Poisson Chern class associated with a Poisson invertible sheaf on a nonsingular Poisson variety over $\mathbb{C}$. (see Definition \ref{pchern}).

In section \ref{section4}, we study deformations of Poisson invertible sheaves under the trivial Poisson deformations. We define the associated deformation functor $Def_{(L,\nabla)}$ for a Poisson invertible sheaf $(L,\nabla)$ on a nonsingular Poisson variety $(X,\Lambda_0)$ (see Definition \ref{unt}). We identify first-order deformations of a Poisson invertible sheaf on a nonsingular Poisson variety with $\mathbb{H}^1(X,\Lambda_0,\mathcal{O}_X^\bullet)$ and obstructions with $\mathbb{H}^2(X,\Lambda_0,\mathcal{O}_X^\bullet)$ (see Proposition \ref{tpl}). We show that for a nonsingular projective Poisson variety, the functor $Def_{(L,\nabla)}$ is pro-representable (see Theorem \ref{ppro2}).

In section \ref{section5}, we study deformations of sections of a Poisson invertible sheaf $(L,\nabla)$ in the trivial Poisson deformations. We show that the formalism of deformations of sections of an invertible sheaf $L$ in the trivial deformations can be extended to Poisson deformations by replacing $H^1(X,\mathcal{O}_X)$ and $H^i(X,L)$ by $\mathbb{H}^1(X,\Lambda_0,\mathcal{O}_X^\bullet)$ and $\mathbb{H}^i(X,\Lambda_0,L^\bullet,\nabla), i=0,1$, respectively. For a global section $s\in \mathbb{H}^0(X,\Lambda_0,L^\bullet,\nabla)$ and a first-order deformation $a\in \mathbb{H}^1(X,\Lambda_0,\mathcal{O}_X^\bullet)$, we study a condition when $s$ can be extended to a section $\tilde{s}\in \mathbb{H}^0(X\times Spec(k[\epsilon]),\Lambda_0,\mathcal{L}_a^\bullet,\nabla_a)$ of the first-order deformation associated with $a$ (see Proposition \ref{sec1}).

In section \ref{section6}, we study simultaneous deformations of nonsingular Poisson varieties and Poisson invertible sheaves. We define the associated deformation functor $Def_{(X,\Lambda_0,L,\nabla)}$ for a nonsingular Poisson variety $(X,\Lambda_0)$ and a Poisson invertible sheaf $(L,\nabla)$ on $(X,\Lambda_0)$ (see Definition \ref{simul}). Given a nonsingular projective Poisson variety $(X,\Lambda_0)$ over $\mathbb{C}$ and a Poisson invertible sheaf $(L,\nabla)$, we show that the Poisson Chern class $c(L,\nabla)$ of $(L,\nabla)$ gives the Poisson Atiyah extension $0\to \mathcal{O}_X^\bullet\to \mathcal{E}_L^\bullet\to T_X^\bullet\to 0$ (see Proposition \ref{patiyah}). We identify first-order deformations with $\mathbb{H}^1(X,\Lambda_0,\mathcal{E}_L^\bullet)$ and obstructions with $\mathbb{H}^2(X,\Lambda_0,\mathcal{E}_L^\bullet)$ (see Proposition \ref{spo}).

In section \ref{section7}, we study deformations of sections of a Poisson invertible sheaf $(L,\nabla)$ in flat Poisson deformations. We show that the formalism of deformations of sections of an invertible sheaf $L$ in flat deformations can be extended to Poisson deformations by replacing $H^1(X,\mathcal{E}_L^\bullet)$ and $H^i(X,L)$ by $\mathbb{H}^1(X,\Lambda_0,\mathcal{E}_L^0)$ and $\mathbb{H}^i(X,\Lambda_0,L^\bullet,\nabla), i=0,1$, respectively. For a global section $s\in \mathbb{H}^0(X,\Lambda_0,L^\bullet,\nabla)$ and a first-order deformation $\eta\in \mathbb{H}^1(X,\Lambda_0,\mathcal{E}_L^\bullet)$, we study a condition when $s$ can be extended to a section $\tilde{s}\in\mathbb{H}^0(\mathcal{X},\Lambda, \mathcal{L}^\bullet,\nabla_{\mathcal{L}})$ of the first-order deformation associated with $\eta$ (see Proposition \ref{sec2}).

In Appendix \ref{appendixB}, more generally, we study deformations of Poisson vector bundles which extend deformations of vector bundles. It is known that infinitesimal deformations of a vector bundle $F$ on a nonsingular variety $X$ is controlled by the sheaf $\mathscr{H}om(F,F)$ so that $H^1(X,\mathscr{H}om(F,F))=Ext^1(F,F)$ represents first-order deformations and $H^2(X,\mathscr{H}om(F,F))=Ext^2(F,F)$ represents obstructions. Given a nonsingular Poisson variety $(X,\Lambda_0)$, a Poisson vector bundle $F$ is a locally free sheaf of finite rank equipped with a flat Poisson connection $\nabla$. We will denote the Poisson vector bundle by $(F,\nabla)$ (see Definition \ref{ve3}). Then $\mathscr{H}om(F,F)$ has a natural flat Poisson connection $\nabla_{\mathscr{H}om(F,F)}$ which defines a complex of sheaves $\mathscr{H}om(F,F)^\bullet:\mathscr{H}om(F,F)\to T_X\otimes \mathscr{H}om(F,F)\to \wedge^2 T_X\otimes \mathscr{H}om(F,F)\to \cdots$. We will denote the $i$-th hypercohomology group by $\mathbb{H}^i(X,\Lambda_0, \mathscr{H}om(F,F)^\bullet,\nabla_{\mathscr{H}om(F,F)})$ (see Remark \ref{ve5}). We define the associated deformation functor $Def_{(F,\nabla)}$ (see Definition \ref{ve4}). We show that infinitesimal deformations of a Poisson vector bundle $(F,\nabla)$ is controlled by $\mathscr{H}om(F,F)^\bullet$ so that $\mathbb{H}^1(X,\Lambda_0,\mathscr{H}om(F,F)^\bullet,\nabla_{\mathscr{H}om(F,F)})$ represents first-order deformations and $\mathbb{H}^2(X,\Lambda_0, \mathscr{H}om(F,F)^\bullet,\nabla_{\mathscr{H}om(F,F)})$ represents obstructions (see Proposition \ref{ve2}).

\section{Preliminaries}\label{section1}
In this paper, every algebra is a commutative $k$-algebra, where $k$ is an algebraically closed field with characteristic $0$. We review the characterization of a Poisson structure on a commutative algebra $A$ over $R$ in terms of an element $\Lambda \in Hom_A(\wedge^2 \Omega_{R/A}^1,A)$ with $[\Lambda,\Lambda]=0$ where $[-,-]$ is the Schouten bracket on $\bigoplus_{p\geq 0}Hom_A(\wedge^p \Omega_{A/R}^1,A)$ and $\Omega_{A/R}^1$ is the $A$-module of relative K\"{a}hler differential forms of $A$ over $R$. For the detail, we refer to \cite{Lau13} Chapter 3. Let $d:A\to \Omega_{A/R}^1$ be the canonical map.
\begin{definition}[Poisson algebras]
A commutative algebra $A$ over a commutative algebra $R$ is a Poisson algebra over $R$ if there is an operation $\{-,-\}:A\times A\to A$ such that for $F,G,H\in A$,
\begin{enumerate}
\item $\{-,-\}$ is a skew-symmetric $R$-bilinear $:$ $\{F,G\}=-\{G,F\}$.
\item $\{FG,H\}=F\{G,H\}+G\{F,H\}$ $($biderivation$)$
\item $\{F,\{G,H\}\}+\{G,\{H,F\}\}+\{H,\{F,G\}\}=0$ $($Jacobi identity$)$.
\end{enumerate}
$\{-,-\}$ is called the Poisson bracket of the Poisson algebra $A$ over $R$. The bracket $\{-,-\}$ defines an element $\Lambda\in Hom_A(\wedge^2 \Omega_{A/R}^1,A)$ so that for $F,G\in A$, $\{F,G\}=\Lambda(dF\wedge dG)$. Then we will denote by $(A,\Lambda)$ the Poisson algebra $A$ over $R$ with the Poisson bracket $\{-,-\}$.
\end{definition}

\begin{definition}
For $p,q \in \mathbb{N}$, a $(p,q)$-shuffle is a permutation $\sigma$ of the set $\{1,...,p+q\}$, such that $\sigma(1)< \cdots < \sigma(p)$ and $\sigma(p+1) < \cdots <\sigma(p+q)$. The set of all $(p,q)$-shuffles is denoted by $S_{p,q}$. For a shuffle $\sigma \in S_{p,q}$, we denote the signature of $\sigma$ by $sgn(\sigma)$. By convention, $S_{p,-1}:=\emptyset$ and $S_{-1,q}:= \emptyset $ for $p,q\in \mathbb{N}$.
\end{definition}

\begin{definition}
We define the Schouten bracket $[-,-]$ on $\bigoplus_{p\geq 0} Hom_A(\wedge^p \Omega_{A/R}^1,A)$, namely a family of maps
\begin{align*}
[-,-]:Hom_A(\wedge^p \Omega_{A/R}^1,A) \times Hom_A(\wedge^q \Omega_{A/R}^1,A) \to Hom_A(\wedge^{p+q-1}\Omega_{A/R}^1,A)
\end{align*}
for $p,q\in \mathbb{N}$ in the following way. Let $P\in Hom_A(\wedge^p \Omega_{A/R}^1,A)$ and $Q\in Hom_A(\wedge^q \Omega_{A/R}^1,A)$, and let $F_1,...,F_{p+q-1}\in A$. Then $[P,Q]\in Hom_A(\wedge^{p+q-1}\Omega_{A/R}^1,A)$ is defined by
\begin{align*}
[P,Q](dF_1\wedge \cdots\wedge dF_{p+q-1})=\sum_{\sigma\in S_{q,p-1}}sgn(\sigma)P(d(Q(dF_{\sigma(1)}\wedge...\wedge dF_{\sigma(q)}))\wedge dF_{\sigma(q+1)}\cdots\wedge dF_{\sigma(q+p-1)})\\
                                     -(-1)^{(p-1)(q-1)}\sum_{\sigma\in S_{p,q-1}}sgn(\sigma)Q(d(P(dF_{\sigma(1)}\wedge...\wedge dF_{\sigma(p)}))\wedge dF_{\sigma(p+1)}\wedge\cdots \wedge dF_{\sigma(p+q-1)})
\end{align*} 
\end{definition}

\begin{example}\label{3ex}
Let $P\in Hom_A(\wedge^2 \Omega_{A/R}^1,A)$, $Q\in Hom_A(\Omega_{A/R}^1,A)$ and $R\in Hom_A(\Omega_{A/R}^1,A)$. Then
\begin{enumerate}
\item $[P,Q](dF_1\wedge dF_2)=P(dQ(dF_1)\wedge dF_2)-P(d(Q(F_2))\wedge dF_1)-Q(d(P(dF_1\wedge dF_2)))$
\item $[Q,R](dF)=Q(dR(dF))-R(dQ(dF))$
\end{enumerate}
\end{example}

\begin{proposition}
Let $A$ be a commutative algebra over $R$. If $\Lambda$ is a skew symmetric biderivation of $A$ over $R$, i.e $\Lambda\in Hom_A(\wedge^2 \Omega_{A/R}^1,A)$, then $P$ defines a Poisson bracket $($i.e Jacobi identity holds$)$ if and only if $[\Lambda,\Lambda]=0$.
\end{proposition}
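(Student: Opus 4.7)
The plan is to reduce the statement to a direct computation of $[\Lambda,\Lambda]$ on exact wedge products $dF_1\wedge dF_2\wedge dF_3$ and compare with the Jacobiator $J(F_1,F_2,F_3) := \{F_1,\{F_2,F_3\}\} + \{F_2,\{F_3,F_1\}\} + \{F_3,\{F_1,F_2\}\}$, where $\{F,G\} = \Lambda(dF\wedge dG)$.

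First I would observe that, since $\Lambda$ is a skew-symmetric biderivation, the associated bracket $\{-,-\}$ is skew-symmetric and a derivation in each argument. A standard check then shows that $J$ is itself a derivation in each of its three arguments (e.g.\ $J(FG,H,K) = F\cdot J(G,H,K)+G\cdot J(F,H,K)$, the cross-terms coming from the biderivation rule cancelling against each other by skew-symmetry), and it is clearly totally antisymmetric. Because $\Omega_{A/R}^1$ is generated as an $A$-module by the exact forms $dF$, $F\in A$, both $[\Lambda,\Lambda]\in \mathrm{Hom}_A(\wedge^3\Omega_{A/R}^1,A)$ and $J$ (viewed as a map $\wedge^3\Omega_{A/R}^1\to A$) are determined by their values on elements of the form $dF_1\wedge dF_2\wedge dF_3$. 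Hence it suffices to establish a numerical identity on such elements.

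Next I would specialize the general Schouten bracket formula to the case $p=q=2$. The relevant shuffle set is $S_{2,1}=\{(1,2,3),(1,3,2),(2,3,1)\}$ with signs $+,-,+$, and the sign prefactor $-(-1)^{(p-1)(q-1)}=+1$ makes the two sums in the definition coincide, yielding
\begin{align*}
[\Lambda,\Lambda](dF_1\wedge dF_2\wedge dF_3) = 2\bigl[&\Lambda(d\{F_1,F_2\}\wedge dF_3) - \Lambda(d\{F_1,F_3\}\wedge dF_2)\\
&+\Lambda(d\{F_2,F_3\}\wedge dF_1)\bigr].
\end{align*}
Rewriting each term using $\Lambda(d G\wedge dH)=\{G,H\}$ gives
\[
[\Lambda,\Lambda](dF_1\wedge dF_2\wedge dF_3) = 2\bigl[\{\{F_1,F_2\},F_3\} - \{\{F_1,F_3\},F_2\} + \{\{F_2,F_3\},F_1\}\bigr],
\]
and an application of skew-symmetry to the Jacobiator shows that this last bracketed expression is exactly $-J(F_1,F_2,F_3)$ (up to an overall sign depending on convention), so $[\Lambda,\Lambda]=0$ as an element of $\mathrm{Hom}_A(\wedge^3\Omega_{A/R}^1,A)$ is equivalent to $J\equiv 0$, i.e.\ to the Jacobi identity.

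The routine but slightly annoying step is verifying that $J$ is a derivation in each variable (so that both sides really do extend from exact generators to all of $\wedge^3\Omega_{A/R}^1$); everything else is bookkeeping with shuffles and signs. I would regard the shuffle/sign computation as the only place where an error is likely, and would cross-check it against Example~\ref{3ex}(1) applied with $Q=\Lambda(\cdot\wedge dF_3)$ to make sure the three-term cyclic expression above is recovered.
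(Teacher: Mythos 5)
Your argument is correct, and the sign bookkeeping checks out against the paper's conventions: the three $(2,1)$-shuffles $(1,2\,|\,3)$, $(1,3\,|\,2)$, $(2,3\,|\,1)$ carry signs $+,-,+$, the prefactor $-(-1)^{(p-1)(q-1)}$ equals $+1$ for $p=q=2$ so the two sums coincide, and one gets $[\Lambda,\Lambda](dF_1\wedge dF_2\wedge dF_3)=-2J(F_1,F_2,F_3)$ with $J$ the Jacobiator. Note, however, that the paper does not actually prove this proposition: it simply cites \cite{Lau13} Proposition 3.5, so your write-up supplies the computation that the reference carries out. One small simplification is available: you do not need to verify that $J$ is a derivation in each argument. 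Since $[\Lambda,\Lambda]$ is by construction an element of $Hom_A(\wedge^3\Omega_{A/R}^1,A)$, i.e.\ $A$-linear, and $\wedge^3\Omega_{A/R}^1$ is generated over $A$ by the elements $dF_1\wedge dF_2\wedge dF_3$, the vanishing of $[\Lambda,\Lambda]$ is already equivalent to its vanishing on those generators, and the displayed identity then converts this directly into the vanishing of $J$ on all triples. The derivation property of $J$ would only be needed if you wanted to realize $J$ itself as a well-defined element of $Hom_A(\wedge^3\Omega_{A/R}^1,A)$, which the equivalence does not require.
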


\begin{proof}
See \cite{Lau13} Proposition 3.5  page 80.
\end{proof}


\begin{remark}
Let $(A,\Lambda)$ be a Poisson algebra over $R$ with $\Lambda \in Hom_A(\wedge^2, \Omega_{A/R}^1,A)$ with $[\Lambda,\Lambda]=0$. Then we have the following properties: for $P\in Hom_A(\wedge^p \Omega_{A/R}^1,A)$ and $Q\in Hom_A(\wedge^q \Omega_{A/R}^1,A)$ and $S\in Hom_A(\wedge^r \Omega_{A/R}^1,A) $,
\begin{enumerate}
\item $[\Lambda,[\Lambda,P]]]=0$ and $[\Lambda,P]\in Hom_A(\wedge^{p+1} \Omega_{A/R}^1,A)$
\item $[P,Q]=-(-1)^{(p-1)(q-1)}[Q,P]$
\item $[[P,Q],S]=[P,[Q,S]]-(-1)^{(p-1)(q-1)}[Q,[P,S]]$
\item $[\Lambda,[P,Q]]=[[\Lambda,P],Q]+(-1)^{p-1}[P,[\Lambda,Q]]$
\item $[P,Q\wedge S]=Q\wedge [P,S]+(-1)^{(p-1)r}[P,Q]\wedge S $
\item $[\Lambda,\Lambda]=0$ define a complex
\begin{align*}
A\xrightarrow{[\Lambda,-]}  Hom_A(\Omega_{A/R}^1,A)\xrightarrow{[\Lambda,-]} Hom_A (\wedge^2 \Omega_{A/R}^1,A)\xrightarrow{[\Lambda,-]} Hom_A(\wedge^3 \Omega_{A/R}^1,A)\xrightarrow{[\Lambda,-]} \cdots
\end{align*}
which is known as Lichnerowicz-Poisson complex.
\end{enumerate}
\end{remark}

\begin{example}
Let $B\otimes_k A$ be a $A$-algebra and $B$ is a finitely generated $k$-algebra so that $\Omega_{B/k}^1$ is finitely presented. Then $Hom_{B\otimes_kA}(\wedge^p \Omega_{B\otimes_k A/A}^1,B\otimes_k A)\cong Hom_B(\wedge^p \Omega_{B/k}^1, B)\otimes _k A$. So the Schouten bracket $[-,-]_{B\otimes_k A}$ on $Hom_B(\wedge^p\Omega_{B/k}^1,B)\otimes A$ over $A$ can be seen as
\begin{align*}
[P\otimes a,Q\otimes b]_{B\otimes_k A}=[P,Q]_B\otimes ab
\end{align*}
\end{example}

We can globalize a Poisson algebra $(A,\Lambda)$ over $R$ to define a Poisson scheme over some base scheme. We note that we can globalize the Schouten bracket, and so characterize a Poisson scheme over some base scheme. (for the detail, see the third part of the author's Ph.D thesis \cite{Kim14})

\begin{definition}
Let $f:X\to S$ be a morphism of $k$-schemes. There is an operation 
\begin{align*}
[-,-]:\mathscr{H}om_{\mathcal{O}_X}(\wedge^p \Omega_{X/S}^1,\mathcal{O}_X )\times \mathscr{H}om_{\mathcal{O}_X}(\wedge^q \Omega_{X/S}^1,\mathcal{O}_X )\to\mathscr{H}om_{\mathcal{O}_X}(\wedge^{p+q-1} \Omega_{X/S}^1,\mathcal{O}_X )
\end{align*}
 which is called the Schouten bracket on a scheme $X$ over $S$.
\end{definition}

\begin{remark}\label{remarkc}
Let $f:X\to S$ be a morphsim of $k$-schemes. The following are equivalent.
\begin{enumerate}
\item $X$ is a Poisson scheme over $S$.
\item There exists a global section $\Lambda\in \Gamma(X,\mathscr{H}om_{\mathcal{O}_X}(\wedge^2 \Omega_{X/S}^1,\mathcal{O}_X))$ with $[\Lambda,\Lambda]=0$.
\end{enumerate}
We will denote the Poisson scheme by $(X,\Lambda)$.
\end{remark}

\begin{definition}
Let $(X,\Lambda_0)$ be an algebraic Poisson scheme over $S$. Then we define Lichnerowicz-Poisson complex by the following complex of sheaves
\begin{align*}
\mathcal{O}_X\xrightarrow{[\Lambda_0,-]} \mathscr{H}om_{\mathcal{O}_X}(\Omega_{X/S}^1,\mathcal{O}_X)\xrightarrow{[\Lambda_0,-]} \mathscr{H}om_{\mathcal{O}_X}(\wedge^2 \Omega^1_{X/S},\mathcal{O}_X)\xrightarrow{[\Lambda_0,-]}\mathscr{H}om_{\mathcal{O}_X}(\wedge^3\Omega^1_{X/S},\mathcal{O}_X)\xrightarrow{[\Lambda_0,-]}\cdots
\end{align*}
We define $i$-th shifted $($by $1$$)$ truncated Lichnerowicz-Poisson complex by the following complex of sheaves
\begin{align*}
\mathscr{H}om_{\mathcal{O}_X}(\Omega_{X/S}^1,\mathcal{O}_X)\xrightarrow{[\Lambda_0,-]} \mathscr{H}om_{\mathcal{O}_X}(\wedge^2 \Omega^1_{X/S},\mathcal{O}_X)\xrightarrow{[\Lambda_0,-]}\mathscr{H}om_{\mathcal{O}_X}(\wedge^3\Omega^1_{X/S},\mathcal{O}_X)\xrightarrow{[\Lambda_0,-]}\cdots
\end{align*}
\end{definition}

\begin{remark}\label{remarkd}
Let $(X,\Lambda_0)$ be a nonsingular Poisson variety over $k$. In this case we denote $\mathscr{H}om_{\mathcal{O}_X}(\wedge^i\Omega_{X/k}^1,\mathcal{O}_X)$ by $\wedge^i T_X$ so that the Lichnerowicz-Poisson complex is
\begin{align*}
\mathcal{O}_X^\bullet:\mathcal{O}_X\xrightarrow{[\Lambda_0,-]}T_X\xrightarrow{[\Lambda_0,-]} \wedge^2 T_X\xrightarrow{[\Lambda_0,-]}\cdots
\end{align*}
We will denote its $i$-th hypercohomology group by $\mathbb{H}^i(X,\Lambda_0,\mathcal{O}_X^\bullet)$. On the other hand, shiftd truncated Lichnerowicz-Poisson complex is
\begin{align*}
T_X^\bullet:T_X\xrightarrow{[\Lambda_0,-]}\wedge^2 T_X\xrightarrow{[\Lambda_0,-]} \wedge^3 T_X\xrightarrow{[\Lambda_0,-]}\cdots
\end{align*}
We will denote its $i$-th hypercohomology group by $\mathbb{H}^i(X,\Lambda_0,T_X^\bullet)$.
\end{remark}

\section{Deformations of algebraic Poisson schemes}\label{section2}
We denote by $\bold{Art}$ the category of local artinian $k$-algebras with residue field $k$, where $k$ is an algebraically closed field with characteristic $0$.
\begin{definition}[small extension]
We say that for $(\tilde{A},\tilde{\mathfrak{m}}), (A,\mathfrak{m})\in \bold{Art}$, an exact sequence of the form $0\to (t)\to \tilde{A}\to A\to 0$ is a small extension if $t\in \tilde{\mathfrak{m}}$ is annihilated by $\tilde{\mathfrak{m}}$ $($i.e, $t\cdot \tilde{\mathfrak{m}}=0)$ so that $(t)$ is an one dimensional $k$-vector space.
\end{definition}

\begin{lemma}[compare \cite{Ser06} Lemma 1.2.6 page 26]\label{3l}
Let $B_0$ be a Poisson $k$-algebra with the Poisson structure $\Lambda_0\in Hom_{B_0}(\wedge^2 \Omega_{B_0/k},B_0)$, and
\begin{align*}
e:0\to(t)\to \tilde{A}\to A\to 0
\end{align*}
a small extension in $\bold{Art}$. Let $\Lambda\in Hom_{B_0}(\wedge^2 \Omega_{B_0/k}^1,B_0)\otimes_k A$ be a Poisson structure on $B_0\otimes_k A$ over $A$  inducing $\Lambda_0$. Let $\Lambda_1,\Lambda_2\in Hom_{B_0}(\wedge^2 \Omega_{B_0/k}^1,B_0)\otimes_k \tilde{A}$ be skew-symmetric biderivations on $B_0\otimes_k \tilde{A}$ over $\tilde{A}$ which induces $\Lambda$. This implies that there exists a $\Lambda'\in Hom_{B_0}(\wedge^2 \Omega_{B_0/k}^1,B_0)$ such that $\Lambda_1-\Lambda_2=t\Lambda'$. Then there is one to one correspondence
\begin{align*}
\{\text{isomorphisms between} \,\,\,(B_0\otimes_k \tilde{A},\Lambda_1)\,\,\,\text{and}\,\,\,(B_0\otimes_k\tilde{A},\Lambda_2)\,\,\,\text{inducing the identity on}\,\,\,(B_0\otimes_k A,\Lambda)\}\\\to \{P\in Der_k(B_0,B_0)=Hom_{B_0}(\Omega_{B_0/k}^1,B_0)| \Lambda'-[\Lambda_0, P]=\Lambda'+[P,\Lambda_0]=0\}
\end{align*}
In particular, when $\Lambda_1=\Lambda_2$, there is a canonical isomorphism of groups
\begin{align*}
\{\text{automorphisms on} \,\,\,(B_0\otimes_k \tilde{A},\Lambda_1)\,\,\,\text{inducing the identity on}\,\,\,(B_0\otimes_k A,\Lambda)\}\to PDer_k(B_0,B_0)
\end{align*}
\end{lemma}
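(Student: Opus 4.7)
The plan is to reduce to the classical (non-Poisson) small-extension lemma and then isolate, within its bijection, the condition that makes the underlying algebra automorphism a Poisson morphism. By \cite{Ser06} Lemma 1.2.6, every $\tilde{A}$-algebra isomorphism $\phi : B_0 \otimes_k \tilde{A} \to B_0 \otimes_k \tilde{A}$ restricting to the identity on $B_0 \otimes_k A$ has the unique form $\phi = \mathrm{id} + tP$, where $P \in Der_k(B_0, B_0)$ is extended $\tilde{A}$-linearly (well-defined because $t\cdot\tilde{\mathfrak{m}}=0$ collapses the $\tilde{A}$-factor to its reduction in $k$); conversely, every such $P$ produces such an isomorphism. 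It therefore suffices to single out the $P$'s that make $\phi$ a Poisson map from $(B_0 \otimes_k \tilde{A}, \Lambda_1)$ to $(B_0 \otimes_k \tilde{A}, \Lambda_2)$, i.e., satisfy $\phi(\{f, g\}_{\Lambda_1}) = \{\phi(f), \phi(g)\}_{\Lambda_2}$ for all $f, g \in B_0 \otimes_k \tilde{A}$.

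To carry this out, I would compute the obstruction $\phi(\{f, g\}_{\Lambda_1}) - \{\phi(f), \phi(g)\}_{\Lambda_2}$ directly, systematically exploiting $t^2 = 0$ and $t\cdot \tilde{\mathfrak{m}} = 0$. Writing $f_0, g_0 \in B_0$ for the images of $f, g$ modulo $\tilde{\mathfrak{m}}$ and using $\Lambda_1 = \Lambda_2 + t\Lambda'$, every $t$-prefixed term forces its companion arguments (both the entries of the bracket and $\Lambda_2$ itself) down to their reductions in $B_0$ and $\Lambda_0$. Expanding and collecting yields
\begin{align*}
\phi(\{f, g\}_{\Lambda_1}) - \{\phi(f), \phi(g)\}_{\Lambda_2} = t\bigl[\Lambda'(df_0 \wedge dg_0) + P(\{f_0, g_0\}_{\Lambda_0}) - \{P(f_0), g_0\}_{\Lambda_0} - \{f_0, P(g_0)\}_{\Lambda_0}\bigr].
\end{align*}

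The final step is to recognize the bracketed quantity as $(\Lambda' - [\Lambda_0, P])(df_0 \wedge dg_0)$: Example \ref{3ex}(1) with the bivector $\Lambda_0$ playing the role of $P$ and the derivation $P$ playing the role of $Q$ gives
\begin{align*}
[\Lambda_0, P](df_0 \wedge dg_0) = \{P(f_0), g_0\}_{\Lambda_0} + \{f_0, P(g_0)\}_{\Lambda_0} - P(\{f_0, g_0\}_{\Lambda_0}).
\end{align*}
Hence Poisson compatibility of $\phi$ is equivalent to $(\Lambda' - [\Lambda_0, P])(df_0 \wedge dg_0) = 0$ for all $f_0, g_0 \in B_0$, i.e., to $\Lambda' - [\Lambda_0, P] = 0$, establishing the claimed bijection. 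For the final statement about automorphisms one specializes to $\Lambda_1 = \Lambda_2$ (so $\Lambda' = 0$), giving the condition $[\Lambda_0, P] = 0$, which is exactly the definition of $PDer_k(B_0, B_0)$; the group-homomorphism property is formal, since $(\mathrm{id}+tP_1)\circ(\mathrm{id}+tP_2) = \mathrm{id} + t(P_1+P_2)$ once $t^2 = 0$ and $t \cdot \tilde{\mathfrak{m}} = 0$ are imposed. The main obstacle is the careful $t$-bookkeeping; once one is comfortable dropping everything in $t\tilde{\mathfrak{m}}$, the identification of the residual obstruction with $[\Lambda_0, P]$ via Example \ref{3ex}(1) is immediate.
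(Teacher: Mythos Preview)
Your proof is correct and follows essentially the same approach as the paper: write the isomorphism as $\mathrm{id}+tP$ via the underlying small-extension lemma, expand the Poisson-compatibility equation using $t\tilde{\mathfrak m}=0$, and identify the residual term with $\Lambda'-[\Lambda_0,P]$ via Example~\ref{3ex}(1); the automorphism and group-homomorphism statements are handled identically.
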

\begin{proof}
Let $\theta:(B_0\otimes_k \tilde{A},\Lambda_1)\to (B_0\otimes_k \tilde{A}, \Lambda_2)$ be an isomorphism compatible with skew symmetric biderivations which induces the identity on $(B_0\otimes_k A,\Lambda)$ so that $\theta$ is $\tilde{A}$-linear and induces the identity modulo by $t$. We have $\theta(x)=x+tPx$, where $P\in Der_{\tilde{A}}(B_0\otimes_k \tilde{A},B_0)=Der_k(B_0,B_0)=Hom_{B_0}(\Omega_{B_0/k}^1,B_0)$. When we think of $P$ as an element of $Hom_{B_0}(\Omega_{B_0/k}^1,B_0)$, we have $\theta(x)=x+tP(dx)$. We define the correspondence by $\theta \mapsto P$. Now we check that $\Lambda'-[\Lambda_0,P]=0$. Since $\theta$ is compatible with skew-symmetric biderivations $\Lambda_1,\Lambda_2$, for $x,y\in B_0$, we have by Example $(\ref{3ex})$,
\begin{align*}
&\theta(\Lambda_1(dx\wedge dy))=\Lambda_2(d(\theta x)\wedge d (\theta y))\\
&\Lambda_1(dx\wedge dy)+t P(d(\Lambda_1(dx\wedge dy)))=\Lambda_2((dx+td(P(dx)))\wedge (dy+td(P(dy))))\\
&\Lambda_1(dx\wedge dy)+t P(d(\Lambda_0(dx\wedge dy)))=\Lambda_2(dx\wedge dy)+t\Lambda_0(dx\wedge d(P(dy)))+t\Lambda_0(d(P(dx))\wedge dy)\\
&t[\Lambda'(dx\wedge dy)+P(d(\Lambda_0(dx\wedge dy)))-\Lambda_0(dx\wedge d(P(dy)))-\Lambda_0(d(P(dx))\wedge dy)]=0\\
&\Lambda'-[\Lambda_0,P]=0
\end{align*}
Since $\theta$ is determined by $P$, the correspondence is one to one.

Now we assume that $\Lambda_1=\Lambda_2$. So $\theta$ corresponds to $P$ with $[\Lambda_0,P]=0$. First we note that $P\in Hom_{B_0}(\Omega_{B_0/k}^1,B_0)$ with $[\Lambda,P]=0$ is a Poisson derivation. i.e $P\in PDer_k(B_0,B_0)$. In other words, $P(\{x,y\})=\{Px,y\}+\{x,Py\}$. Indeed, $0=[\Lambda_0,P](dx\wedge dy)=\Lambda_0(d(Px)\wedge dy)-\Lambda_0(d(Py)\wedge dx)-P(d(\Lambda_0(dx\wedge dy))$.

We show that the correspondence is a group isomorphism. Indeed, let $\theta(x)=x+tPx$ and $\sigma(y)=y+tQy$ with $[\Lambda_0,P]=[\Lambda_0,Q]=0$. Then $\sigma(\theta(x))=\theta(x)+tQ(\theta(x))=x+tPx+tQ(x+tPx)=x+tPx+tQx=x+t(P+Q)x$. Hence $\sigma\circ \theta$ corresponds to $P+Q$. Since $[\Lambda_0,P+Q]=0$ and identity map corresponds to $0$, the correspondence is a group isomorphism.
\end{proof}

\begin{lemma}\label{3n}
 Let $B_0$ be a $k$-algebra with $T_0\in Hom_{B_0}(\Omega_{B_0/k}^1,B_0)$, and $e:0\to (t)\to\tilde{A}\to A\to 0$ a small extension in $\bold{Art}$. Let $T\in Hom_{B_0}(\Omega_{B_0/k}^1,B_0)\otimes A$ inducing $T_0$, and $T_1,T_2\in  Hom_{B_0}(\Omega_{B_0/k},B_0)\otimes \tilde{A}$ which induce $T$. This implies that there is a $T'\in Hom_{B_0}(\Omega_{B_0/k}^1,B_0)$ such that $T_1-T_2=tT'$. Then there is one to one correspondence
 \begin{align*}
\{\text{isomorphisms between} \,\,\,(B_0\otimes_k \tilde{A},T_1)\,\,\,\text{and}\,\,\,(B_0\otimes_k\tilde{A},T_2)\,\,\,\text{inducing the identity on}\,\,\,(B_0\otimes_k A,T)\}\\\to \{P\in Der_k(B_0,B_0)=Hom_{B_0}(\Omega_{B_0/k},B_0)| T'-[T_0, P]=T'+[P,T_0]=0\}
\end{align*}
\end{lemma}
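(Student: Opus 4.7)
The plan is to mirror the proof of Lemma \ref{3l} one degree lower, replacing the bivector $\Lambda_0$ by the derivation $T_0$. First I would observe that any $\tilde{A}$-linear ring isomorphism $\theta:B_0\otimes_k \tilde{A}\to B_0\otimes_k\tilde{A}$ inducing the identity on $B_0\otimes_k A$ has the form $\theta(x)=x+tPx$ for $x\in B_0$, where $P\in Der_k(B_0,B_0)=Hom_{B_0}(\Omega_{B_0/k}^1,B_0)$: the relation $t\tilde{\mathfrak{m}}=0$ forces the first-order correction to land in $B_0$, and compatibility of $\theta$ with the multiplication on $B_0\otimes_k\tilde{A}$ forces $P$ to be a $k$-derivation. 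This opening step is formally identical to the one used in the proof of Lemma \ref{3l}.

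Next I would impose the condition that $\theta$ intertwines $T_1$ and $T_2$, i.e. $\theta(T_1(dx))=T_2(d\theta(x))$ for every $x\in B_0$. Writing $T_i=T_0+\tilde{T}_i$ with $\tilde{T}_i\in Hom_{B_0}(\Omega^1_{B_0/k},B_0)\otimes \tilde{\mathfrak{m}}$ and $\tilde{T}_1-\tilde{T}_2=tT'$, the key observation is that $t\cdot\tilde{\mathfrak{m}}=0$ causes $\theta$ to act as the identity on $B_0\otimes\tilde{\mathfrak{m}}$ and causes every factor of $t$ to annihilate the $\tilde{\mathfrak{m}}$-parts of the $T_i$. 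Expanding both sides then yields
\begin{align*}
T_1(dx)+tP(T_0(dx))\;=\;T_2(dx)+tT_0(d(Px)),
\end{align*}
which collapses to $tT'(dx)=t\bigl[T_0(d(Px))-P(d(T_0(dx)))\bigr]$. By Example \ref{3ex}(2) the bracketed expression is exactly $[T_0,P](dx)$, so the compatibility condition is equivalent to $T'-[T_0,P]=0$, as claimed.

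Finally I would verify that the assignment $\theta\mapsto P$ is a bijection. Injectivity is clear since $\theta$ is determined by its values on $B_0\otimes 1$; for surjectivity, given any derivation $P\in Hom_{B_0}(\Omega^1_{B_0/k},B_0)$ satisfying $T'=[T_0,P]$, the formula $\theta(x)=x+tPx$ (extended $\tilde{A}$-linearly) defines an $\tilde{A}$-algebra automorphism with inverse $x\mapsto x-tPx$, and reversing the above computation shows it intertwines $T_1$ and $T_2$. The only mildly delicate point, exactly as in Lemma \ref{3l}, is the careful bookkeeping of which terms are annihilated by $t\cdot\tilde{\mathfrak{m}}=0$; beyond that there is no genuine obstacle, since the calculation is a direct and strictly simpler analogue of the bivector case.
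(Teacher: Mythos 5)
Your proposal is correct and follows essentially the same route as the paper: write $\theta(x)=x+tPx$, impose $\theta(T_1(dx))=T_2(d\theta(x))$, use $t\cdot\tilde{\mathfrak{m}}=0$ to reduce to $T_1(dx)+tP(d(T_0(dx)))=T_2(dx)+tT_0(d(P(dx)))$, and identify the result with $T'-[T_0,P]=0$ via Example \ref{3ex}(2). The paper's own proof is just this one-line computation, deferring the setup and the bijectivity to the proof of Lemma \ref{3l}, so your additional verification of injectivity and surjectivity only makes the argument more self-contained.
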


\begin{proof}
We keep the notations in the proof of Lemma \ref{3l}. Since $\theta(T_1(dx))=T_2(d\theta(x))$, we have $T_1(dx)+tP(dT_0(dx))=T_2(dx)+tT_0(dP(dx))$, which means $T'+[P,T_0]=0$.
\end{proof}

Now we discuss deformations of algebraic Poisson schemes. All schemes will be assumed to be defined over an algebraically closed field $k$ with characteristic $0$, locally noetherian and separated. 
\begin{definition}[flat Poisson deformations, compare \cite{Ser06} and see also \cite{Nam09}, \cite{Gin04}]\label{definitionp}
Let $A\in \bold{Art}$. Let $(X,\Lambda_0)$ be an algebraic Poisson scheme over $k$. An infinitesimal $($Poisson$)$ deformation of $(X,\Lambda_0)$ over $A$ is a cartesian diagram of morphisms of schemes
\begin{center}
$\xi:$
$\begin{CD}
(X,\Lambda_0) @>i>> (\mathcal{X},\Lambda)\\
@VVV @VV{\pi}V\\
Spec(k) @>>>Spec(A)
\end{CD}$
\end{center}
where $\pi$ is flat, $(\mathcal{X},\Lambda)$ is a Poisson scheme over $Spec(A)$ with $\Lambda\in \Gamma(\mathcal{X}, \mathscr{H}om_{\mathcal{O}_{\mathcal{X}}}(\wedge^2 \Omega_{\mathcal{X}/A}^1, \mathcal{O}_{\mathcal{X}}))$ and $(X,\Lambda_0) \cong (\mathcal{X},\Lambda) \times_{Spec(A)} Spec(k)$ as a Poisson isomorphism: in other words, $\Lambda_0$ is induced from $\Lambda$ so that $(X,\Lambda_0)$ is a closed Poisson subscheme of $(\mathcal{X},\Lambda)$. $\xi$ is called a first-order deformation if $A=k[\epsilon]$. Two deformations $(\mathcal{X},\Lambda)$ and $(\mathcal{X}',\Lambda')$ of $(X,\Lambda_0)$ is isomorphic if there is a Poisson isomorphism $\phi:(\mathcal{X},\Lambda)\to (\mathcal{X}',\Lambda')$ over $Spec(A)$ inducing $(X,\Lambda_0)$. Then we can define a functor of Artin rings
\begin{align*}
 Def_{(X,\Lambda_0)}:\bold{Art}&\to (sets)\\
                              A&\mapsto \{\text{infinitesimal deformations of $(X,\Lambda_0)$ over $A$}\}/\text{isomorphism}
\end{align*} 
\end{definition} 

\begin{definition}[trivial Poisson deformations]
Let $(X,\Lambda_0)$ be an algebraic Poisson scheme over $k$. An infinitesimal deformation of $(X,\Lambda_0)$ over $A\in \bold{Art}$ is called trivial if it is isomorphic to the following infinitesimal deformation 
\begin{center}
$\begin{CD}
(X,\Lambda_0) @>>> (X\times_{Spec(k)} Spec(A), \Lambda_0)\\
@VVV @VV{\pi}V\\
Spec(k) @>>>Spec(A)
\end{CD}$
\end{center}
\end{definition}

\begin{definition}[rigid Poisson deformations]
An algebraic Poisson scheme $(X,\Lambda_0)$ over $k$ is called rigid if every infinitesimal Poisson deformation of $(X,\Lambda_0)$ over $A$ is trivial for every $A$ in $\bold{Art}$.
\end{definition}

\begin{proposition}[compare \cite{Ser06} Proposition 1.2.9 page 29 and see also \cite{Nam09} Proposition 8]\label{3q}
Let $(X,\Lambda_0)$ be a nonsingular Poisson variety with $\Lambda_0\in \Gamma(X,\wedge^2 T_X)$. There is a canonical isomorphism
\begin{align*}
Def_{(X,\Lambda_0)}(k[\epsilon])=\{\text{first order Poisson deformations of $(X,\Lambda_0)$/isomorphism}\} \xrightarrow{\kappa} \mathbb{H}^1(X,\Lambda_0,T_X^\bullet)
\end{align*}

such that $\kappa(\xi)=0$ if and only $\xi$ is the trivial Poisson deformation class. 
\end{proposition}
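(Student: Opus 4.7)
The plan is to follow the classical \v{C}ech-cohomological proof of \cite{Ser06} Proposition 1.2.9 that identifies first-order flat deformations of a nonsingular variety with $H^1(X,T_X)$, replacing $T_X$ by the truncated Lichnerowicz--Poisson complex $T_X^\bullet$ and packaging the extra Poisson datum as the degree-$1$ part of a total cocycle in the associated \v{C}ech--Lichnerowicz double complex. Choose an affine open cover $\mathcal{U}=\{U_i\}$ of $X$. Since every smooth affine scheme is rigid as a flat deformation over $k[\epsilon]$, for each $i$ there exists a trivialization $\phi_i:\mathcal{X}|_{U_i}\xrightarrow{\sim} U_i\times\mathrm{Spec}(k[\epsilon])$ of the underlying scheme deformation. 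Under $\phi_i$ the Poisson bivector $\Lambda|_{U_i}$ transports to $\Lambda_0|_{U_i}+\epsilon\,\Lambda_i'$ for a unique $\Lambda_i'\in\Gamma(U_i,\wedge^2 T_X)$, and expanding $[\Lambda,\Lambda]=0$ modulo $\epsilon^2$ yields $[\Lambda_0,\Lambda_i']=0$, so each $\Lambda_i'$ is a local section of $\ker\bigl([\Lambda_0,-]:\wedge^2T_X\to\wedge^3T_X\bigr)$.

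On each double overlap $U_{ij}$, the transition $\phi_j\circ\phi_i^{-1}$ is an isomorphism from $(U_{ij}\times\mathrm{Spec}(k[\epsilon]),\Lambda_0+\epsilon\Lambda_i'|_{U_{ij}})$ to $(U_{ij}\times\mathrm{Spec}(k[\epsilon]),\Lambda_0+\epsilon\Lambda_j'|_{U_{ij}})$ inducing the identity modulo $\epsilon$, and Lemma~\ref{3l} (applied with $\tilde{A}=k[\epsilon]$, $A=k$, $t=\epsilon$) converts this datum into a unique derivation $P_{ij}\in\Gamma(U_{ij},T_X)$ satisfying $\Lambda_i'-\Lambda_j'=[\Lambda_0,P_{ij}]$. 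The \v{C}ech cocycle identity $\phi_{ik}=\phi_{ij}\circ\phi_{jk}$ on triple overlaps, together with the statement of Lemma~\ref{3l} that composing Poisson isomorphisms corresponds to adding their associated derivations, yields $P_{ij}+P_{jk}=P_{ik}$. Thus $(\{P_{ij}\},\{\Lambda_i'\})$ is a $1$-hypercocycle in the total complex of $\check{C}^\bullet(\mathcal{U},T_X^\bullet)$, and I define $\kappa(\xi)\in\mathbb{H}^1(X,\Lambda_0,T_X^\bullet)$ to be its class.

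The remaining verifications follow the classical template. A change of local trivializations $\phi_i\mapsto\psi_i\circ\phi_i$, where $\psi_i$ corresponds via Lemma~\ref{3l} to a derivation $Q_i\in\Gamma(U_i,T_X)$, modifies $(\{P_{ij}\},\{\Lambda_i'\})$ by the total coboundary of $\{Q_i\}$, so $\kappa(\xi)$ depends only on the isomorphism class of $\xi$; the same computation shows that a Poisson isomorphism between two first-order deformations produces cohomologous cocycles. Conversely, given a $1$-hypercocycle $(\{P_{ij}\},\{\Lambda_i'\})$, glue the local Poisson deformations $\bigl(U_i\times\mathrm{Spec}(k[\epsilon]),\,\Lambda_0+\epsilon\Lambda_i'\bigr)$ along $U_{ij}$ using the Poisson isomorphism determined by $P_{ij}$ through Lemma~\ref{3l}; the two cocycle conditions provide exactly the compatibility of the Poisson structures on overlaps and the triple-overlap identity of the gluing isomorphisms, producing a first-order Poisson deformation whose class is the starting cocycle. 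This is the inverse of $\kappa$. Finally, $\kappa(\xi)=0$ iff the cocycle is a coboundary, iff the $\phi_i$'s can be adjusted so that all $\Lambda_i'=0$ and all $P_{ij}=0$, iff $\xi$ is the trivial Poisson deformation.

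The only real difficulty is bookkeeping: the signs in the total differential $\check\delta\pm[\Lambda_0,-]$ of the \v{C}ech--Lichnerowicz double complex must be matched against those produced by Lemma~\ref{3l}, and the group-isomorphism statement in Lemma~\ref{3l} has to be extended cleanly to compositions of Poisson isomorphisms between \emph{different} Poisson structures (an easy computation modulo $\epsilon^2$, essentially identical to the case $\Lambda_1=\Lambda_2$ treated there). Once these conventions are fixed, every step reduces to the kind of routine verification that underlies the classical proof in \cite{Ser06}.
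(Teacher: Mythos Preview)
Your proposal is correct and follows essentially the same approach as the paper: trivialize locally over an affine cover, transport $\Lambda$ to $\Lambda_0+\epsilon\Lambda_i'$, extract the transition derivations $P_{ij}$ via Lemma~\ref{3l}, and verify that $(\{P_{ij}\},\{\Lambda_i'\})$ is a \v{C}ech $1$-hypercocycle for $T_X^\bullet$; the inverse is gluing. One small correction on the bookkeeping you already flag: with your conventions the map $\phi_j\circ\phi_i^{-1}$ of schemes corresponds to a ring map from the $\Lambda_j'$-side to the $\Lambda_i'$-side, so Lemma~\ref{3l} actually gives $\Lambda_j'-\Lambda_i'=[\Lambda_0,P_{ij}]$, and accordingly the paper packages the cocycle as $(\{p_{ij}\},\{-\Lambda_i\})$ rather than $(\{P_{ij}\},\{\Lambda_i'\})$.
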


\begin{proof}
Given a first-order Poisson deformation of a nonsingular Poisson variety $(X,\Lambda_0)$, 
\begin{center}
$\begin{CD}
(X,\Lambda_0)@>>> (\mathcal{X},\Lambda)\\
@VVV @VVV\\
Spec(k)@>>> Spec(k[\epsilon])
\end{CD}$
\end{center}
we choose an affine open covering $\mathcal{U}=\{U_i\}$ of $X$ such that $\mathcal{X}|_{U_i}\cong U_i\times Spec(k[\epsilon])$ is trivial for all $i$ with the induced Poisson structure $\Lambda_0+\epsilon\Lambda_i\in \Gamma(U_i,T_X)\otimes_k k[\epsilon]$ on $U_i\times Spec(k[\epsilon])$ from $\Lambda$. For each $i$, we have a Poisson isomorphism
\begin{align*}
\theta_i:(U_i\times Spec(k[\epsilon]),\Lambda_0+\epsilon\Lambda_i)\to (\mathcal{X}|_{U_i},\Lambda|_{U_i})
\end{align*}
Then for each $i,j$, $\theta_{ij}:=\theta_j^{-1}\theta_i:(U_{ij}\times Spec(k[\epsilon]),\Lambda_0+\epsilon\Lambda_i)\to (U_{ij}\times Spec(k[\epsilon]), \Lambda_0+\epsilon\Lambda_j)$ is a Poisson isomorphism inducing the identity on $(U_{ij},\Lambda_0)$ by modulo $\epsilon$. Hence by Lemma \ref{3l}, $\theta_{ij}$ corresponds to $Id+\epsilon p_{ij}:(\mathcal{O}_X(U_{ij})\otimes k[\epsilon],\Lambda_0+\epsilon \Lambda_j)\to (\mathcal{O}_X(U_{ij})\otimes k[\epsilon],\Lambda_0+\epsilon \Lambda_i)$ where $p_{ij}\in \Gamma(U_{ij}, T_X)$ where $T_X=\mathscr{H}om_{\mathcal{O}_X}(\Omega_{X/k}^1,\mathcal{O}_X)=Der_k(\mathcal{O}_X,\mathcal{O}_X)$ such that $\Lambda_j-\Lambda_i-[\Lambda_0,p_{ij}]=0$. We claim that $(\{p_{ij}\},\{-\Lambda_i\})\in C^1(\mathcal{U},T_X)\oplus C^0(\mathcal{U},\wedge^2 T_X)$ is a $1$-cocycle in the following diagram (here $\delta$ is the \v{C}ech map)
\begin{center}
$\begin{CD}
C^0(\mathcal{U},\wedge^3 T_X)\\
@A[\Lambda_0,-]AA \\
C^0(\mathcal{U},\wedge^2 T_X)@>\delta>> C^1(\mathcal{U},T_X)\\
@A[\Lambda_0,-]AA @A[\Lambda_0,-]AA \\
C^0(\mathcal{U},T_X)@>-\delta>>C^1(\mathcal{U},T_X)@>\delta>>C^2(\mathcal{U},T_X)
\end{CD}$
\end{center}

Since $[\Lambda_0+\epsilon \Lambda_i,\Lambda_0+\epsilon\Lambda_i]=0$, we have $[\Lambda_0,-\Lambda_i]=0$. Since on each $U_{ijk}$ we have $\theta_{ij}\theta_{jk}\theta_{ik}^{-1}=Id_{U_{ijk}\times Spec(k[\epsilon])}$, we have  $(Id+\epsilon p_{ij})\circ(Id+\epsilon p_{jk})\circ (Id+\epsilon p_{ki})=Id$ so that $p_{ij}+p_{jk}-p_{ik}=0$, and so $\delta(\{p_{ij}\})=0$. Since $\Lambda_j-\Lambda_i-[\Lambda_0,p_{ij}]=0$, we have $\delta(\{-\Lambda_i\})+[\Lambda_0,\{p_{ij}\}]=0$. Hence $(\{p_{ij}\},\{-\Lambda_i\})$ defines an element in $\mathbb{H}^1(X,\Lambda,T_X^\bullet)$.

Now we show that for two equivalent Poisson deformations of $(X,\Lambda_0)$, associated $1$-cocycles are equivalent. If we have another Poisson deformation
\begin{center}
$\begin{CD}
(X,\Lambda_0)@>>> (\mathcal{X}',\Lambda')\\
@VVV @VVV\\
Spec(k)@>>> Spec(k[\epsilon])
\end{CD}$
\end{center}
which induces a $1$-cocycle $(\{p_{ij}'\},\{-\Lambda_i'\})$ and $\Phi:(\mathcal{X},\Lambda)\to (\mathcal{X'},\Lambda')$ is a Poisson isomorphism of deformations, then for each $i$, there is an induced Poisson isomorphism:
\begin{align*}
\alpha_i:(U_i\times Spec(k[\epsilon]),\Lambda_0+\epsilon\Lambda_i) \xrightarrow{\theta_i} (\mathcal{X}|_{U_i},\Lambda|_{U_i} )\xrightarrow{\Phi|_{U_i}} (\mathcal{X}'|_{U_i},\Lambda'|_{U_i} )\xrightarrow{\theta_i^{'-1}} (U_i\times Spec(k[\epsilon]),\Lambda_0+\epsilon\Lambda_i')
\end{align*}
Then $\alpha_i$ corresponds to $a_i\in \Gamma(U_i,T_X)$ such that $\Lambda_i'-\Lambda_i-[\Lambda_0,a_i]=0$ by Lemma \ref{3l} . We have $\theta_i'\alpha_i=\Phi|_{U_i}\theta_i$ and therefore $(\theta_j'\alpha_j)^{-1}(\theta_i'\alpha_i)=\theta_j^{-1}\Phi|_{U_{ij}}^{-1}\Phi|_{U_{ij}}\theta_i=\theta_{ij}$ so that we have $\alpha_j^{-1}\theta_{ij}'\alpha_i=\theta_{ij}$ Hence $(Id+\epsilon a_i)(Id+\epsilon p_{ij}')(Id-\epsilon a_j')=Id+\epsilon p_{ij}$ which means $a_i-a_j=p_{ij}-p_{ij}'$.

 Since $-\delta(\{a_i\})=a_i-a_j=p_{ij}-p_{ij}'$ and $\Lambda'_i - \Lambda_i=[\Lambda_0,a_i]$, $(\{p_{ij}\},\{-\Lambda_i\})$ and $(\{p_{ij}'\},\{-\Lambda_i'\})$ are cohomologous. 

Now we define an inverse map. Given an element in $\mathbb{H}^1(X,\Lambda_0,T_X^\bullet)$, we represent it by a \v{C}ech $1$-cocylce $(\{p_{ij}\}, \{-\Lambda_i\})$ for an affine open cover $\mathcal{U}=\{U_i\}$ of $X$. So we have $[\Lambda_0,-\Lambda_i]=0$, $p_{ij}+p_{jk}-p_{ik}=0$ and $\Lambda_i-\Lambda_j=[\Lambda_0,p_{ij}]=0$. By reversing the above process, the cohomology class gives a glueing condition to make a Poisson deformation of $(X,\Lambda_0)$.
\end{proof}

\begin{definition}
Let $(X,\Lambda_0)$ be a nonsingular Poisson variety. Consider a small extension
\begin{align*}
e:0\to (t)\to \tilde{A}\to A\to 0
\end{align*}
in $\bold{Art}$. let
\begin{center}
$\xi:
\begin{CD}
(X,\Lambda_0)@>>> (\mathcal{X},\Lambda)\\
@VVV @VVV\\
Spec(k)@>>> Spec(A)
\end{CD}$
\end{center}
be an infinitesimal Poisson deformation of $(X,\Lambda_0)$ over $A$. A lifting of $\xi$ to $\tilde{A}$ is an infinitesimal Poisson deformation $\tilde{\xi}$ over $\tilde{A}$ 
\begin{center}
$\tilde{\xi}:
\begin{CD}
(X,\Lambda_0)@>>> (\tilde{\mathcal{X}},\tilde{\Lambda})\\
@VVV @VVV\\
Spec(k)@>>> Spec(\tilde{A})
\end{CD}$
\end{center}
inducing $\xi$ up to isomorphism.
\end{definition}

\begin{proposition}[compare \cite{Ser06} Proposition 1.2.12]\label{lifting}
Let $(X,\Lambda_0)$ be a nonsingular Poisson variety. Let $A\in \bold{Art}$ and an infinitesimal Poisson deformation $\xi=(\mathcal{X},\Lambda)$ of $(X,\Lambda_0)$ over $A$. To every small extension $e:0\to (t)\to \tilde{A}\to A\to 0$,  we can associate an element $o_{\xi}(e)\in \mathbb{H}^2(X,\Lambda_0,T_X^\bullet)$ called the obstruction lifting of $\xi$ to $\tilde{A}$, which is $0$ if and only if a lifting of $\xi$ to $\tilde{A}$ exists.
\end{proposition}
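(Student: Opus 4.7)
The plan is to mimic the classical construction of the obstruction class in $H^2(X,T_X)$ for lifting a deformation of a smooth scheme (Serre, Proposition 1.2.12), and upgrade it to the Poisson setting by additionally recording (i) the failure of a local scheme-theoretic lift of $\Lambda$ to satisfy Jacobi, and (ii) the failure of local gluing isomorphisms to be Poisson. These three pieces will assemble into a $2$-cocycle of the total \v{C}ech complex of $T_X^\bullet = [T_X\xrightarrow{[\Lambda_0,-]}\wedge^2 T_X\xrightarrow{[\Lambda_0,-]}\wedge^3 T_X\to\cdots]$, yielding a class in $\mathbb{H}^2(X,\Lambda_0,T_X^\bullet)$ whose vanishing is the precise obstruction to lifting.

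Concretely, choose an affine open cover $\mathcal{U}=\{U_i\}$ of $X$ and Poisson trivializations $\theta_i:(U_i\times\mathrm{Spec}(A),\Lambda_{A,i})\xrightarrow{\sim}(\mathcal{X}|_{U_i},\Lambda|_{U_i})$ as in the proof of Proposition \ref{3q}. Since each $U_i$ is smooth affine, the trivial scheme-theoretic lift $U_i\times\mathrm{Spec}(\tilde A)$ exists, and the skew-symmetric biderivation $\Lambda_{A,i}$ admits some lift (not a priori Poisson) $\tilde\Lambda_i\in\Gamma(U_i,\wedge^2 T_X)\otimes\tilde A$. The bracket $[\tilde\Lambda_i,\tilde\Lambda_i]$ reduces to $0$ modulo $t$, so equals $t\,C_i$ for a unique $C_i\in\Gamma(U_i,\wedge^3 T_X)$, and the graded Jacobi identity $[P,[P,P]]=0$ for bivectors applied to $\tilde\Lambda_i$ forces $[\Lambda_0,C_i]=0$. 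Next, lift the transition isomorphisms $\theta_{ij}=\theta_j^{-1}\theta_i$ to scheme-theoretic (not necessarily Poisson) $\tilde A$-isomorphisms $\tilde\theta_{ij}$ between trivial lifts over $U_{ij}$, using smoothness. The Poisson discrepancy $\tilde\theta_{ij}^*\tilde\Lambda_j-\tilde\Lambda_i$ vanishes modulo $t$ and hence equals $t\,B_{ij}$ for a unique $B_{ij}\in\Gamma(U_{ij},\wedge^2 T_X)$. Finally, $\tilde\theta_{ik}^{-1}\tilde\theta_{jk}\tilde\theta_{ij}$ induces the identity modulo $t$ on $U_{ijk}$, and by Lemma \ref{3n} corresponds to an element $A_{ijk}\in\Gamma(U_{ijk},T_X)$.

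The next step is a direct \v{C}ech-level verification, one degree above the calculation in the proof of Proposition \ref{3q}, that $(A_{ijk},B_{ij},C_i)$ is a total $2$-cocycle, i.e.\ satisfies $\delta A=0$ on $4$-fold intersections (associativity of composition), $\delta B=[\Lambda_0,A]$ up to sign (comparing the Poisson discrepancy of $\tilde\theta_{ik}^{-1}\tilde\theta_{jk}\tilde\theta_{ij}$ with that of the identity via Lemma \ref{3l}), $\delta C=[\Lambda_0,B]$ up to sign (applying $\tilde\theta_{ij}^*$ to $[\tilde\Lambda_i,\tilde\Lambda_i]=t\,C_i$ and comparing with $[\tilde\Lambda_j,\tilde\Lambda_j]=t\,C_j$), and $[\Lambda_0,C]=0$ from the previous paragraph. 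Replacing the local data by $(\tilde\Lambda_i+t\beta_i,(\mathrm{Id}+t\gamma_{ij})\tilde\theta_{ij})$ with $\beta_i\in\Gamma(U_i,\wedge^2 T_X)$ and $\gamma_{ij}\in\Gamma(U_{ij},T_X)$ modifies $(A,B,C)$ by the total differential of $(\gamma_{ij},-\beta_i)\in C^1(T_X)\oplus C^0(\wedge^2 T_X)$; similarly, refinement of $\mathcal{U}$ does not change the class. Hence the class $o_\xi(e):=[(A_{ijk},B_{ij},C_i)]\in\mathbb{H}^2(X,\Lambda_0,T_X^\bullet)$ depends only on $\xi$ and $e$.

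For the final equivalence: if $o_\xi(e)=0$, subtracting a coboundary allows one to choose $(\tilde\Lambda_i,\tilde\theta_{ij})$ with $C_i=0$, $B_{ij}=0$, $A_{ijk}=0$; these are exactly the conditions that each $\tilde\Lambda_i$ is a Poisson structure, each $\tilde\theta_{ij}$ is a Poisson isomorphism, and they satisfy the strict cocycle, so they glue to a Poisson lifting $\tilde\xi$ of $\xi$ over $\tilde A$. Conversely, any Poisson lifting $\tilde\xi$ furnishes Poisson trivializations and gluing data realizing all three failures as zero, so $o_\xi(e)=0$. The main technical obstacle is the bookkeeping in the cocycle verification, since it requires the transformation law for the Schouten bracket under a scheme automorphism that lifts the identity modulo $t$ — essentially a second-order analogue of Lemma \ref{3l}; everything else is a routine parallel to the classical argument.
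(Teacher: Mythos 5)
Your proposal is correct and follows essentially the same route as the paper's proof: arbitrary (non-Poisson) lifts of the local bivectors and transition isomorphisms, the three discrepancies $(A_{ijk},B_{ij},C_i)$ measuring the cocycle failure, the Poisson incompatibility of the transitions, and the failure of Jacobi, assembled into a total \v{C}ech $2$-cocycle of $T_X^\bullet$ whose class is independent of choices and vanishes iff a lifting exists. The only differences are cosmetic (the paper's cocycle carries normalizing factors of $2$, e.g.\ $(\{\Pi_i\},\{2\Lambda_{ij}'\},\{-2\tilde d_{ijk}\})$, which your ``up to sign'' bookkeeping would produce, and the identification of the triple-overlap automorphism with a derivation comes from Lemma \ref{3l} rather than Lemma \ref{3n}).
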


\begin{proof}
Let $\mathcal{U}=\{U_i\}$ be an affine open covering of $X$ such that we have Poisson isomorphisms $\theta_i:(U_i\times Spec(A),\Lambda_i)\to (\mathcal{X}|_{U_i},\Lambda|_{U_i})$, where $\Lambda_i\in \Gamma(U_i,\wedge^2 T_X)\otimes A$ with $[\Lambda_i,\Lambda_i]=0$, and $\theta_{ij}:=\theta_j^{-1}\theta_i$ is a Poisson isomorphism with $\theta_{ij}\theta_{jk}=\theta_{ik}$ on $U_{ijk}\times Spec(A)$. To give a lifting $\tilde{\xi}$ of $\xi$ to $\tilde{A}$ is equivalent to give a collection of $\{\tilde{\Lambda}_i\}$ where $\tilde{\Lambda}_i \in \Gamma(U_i,\wedge^2 T_X)\otimes_k \tilde{A}$ with $[\tilde{\Lambda}_i,\tilde{\Lambda}_i]=0$ is a Poisson structure on $U_i\times Spec(\tilde{A})$ and a collection of Poisson isomorphisms $\{\tilde{\theta}_{ij}\}$ where $\tilde{\theta}_{ij}:(U_{ij}\times Spec(\tilde{A}),\tilde{\Lambda}_i)\to (U_{ij}\times Spec(\tilde{A}),\tilde{\Lambda}_j)$ such that
\begin{enumerate}
\item $\tilde{\theta}_{ij}\tilde{\theta}_{jk}=\tilde{\theta}_{ik}$ as a Poisson isomorphism.
\item $\tilde{\theta}_{ij}$ restricts to $\theta_{ij}$ on $U_{ij}\times Spec(A)$.
\item $\tilde{\Lambda}_i$ restrits to $\Lambda_i$.
\end{enumerate}

From such data, we can glue together $(U_i\times Spec(\tilde{A}),\tilde{\Lambda}_i)$ to make a Poisson deformation $(\tilde{\mathcal{X}},\tilde{\Lambda})$ inducing $(\mathcal{X},\Lambda)$. Now given a Poisson deformation $\xi=(\mathcal{X},\Lambda)$ over $A$ and a small extension $e:0\to (t)\to \tilde{A}\to A\to0$, we associate an element $o_{\xi}(e)\in \mathbb{H}^2(X,\Lambda_0,T_X^\bullet)$. Choose arbitrary automorphisms $\{\tilde{\theta}_{ij}\}$ satisfying $(2)$ (for the existence of lifting, see \cite{Ser06} Lemma 1.2.8) and arbitrary $\tilde{\Lambda}_i\in \Gamma(U_i,\wedge^2 T_X)$ satisfying $(3)$ (not necessarily $[\tilde{\Lambda}_i,\tilde{\Lambda}_i]=0$). The lifting exists since $\Gamma(U_i,\wedge^2 T_X)\otimes_k \tilde{A} \to \Gamma(U_i,\wedge^2 T_X)\otimes_k A$ is surjective. Let $\tilde{\theta}_{ijk}=\tilde{\theta}_{ij}\tilde{\theta}_{jk}\tilde{\theta}_{ik}^{-1}$. Since $\tilde{\theta}_{ijk}$ is an automorphism on $U_{ijk}\times Spec(\tilde{A})$ inducing the identity on $U_{ijk}\times Spec(A)$, $\tilde{\theta}_{ijk}$ corresponds to $\tilde{d}_{ijk}\in \Gamma(U_{ijk},T_X)$ and $\tilde{d}_{jkl}-\tilde{d}_{ikl}+\tilde{d}_{ijl}-\tilde{d}_{jkl}=0$. So we have $-\delta(\{\tilde{d}_{ijk}\})=0$. Since $[\tilde{\Lambda}_i,\tilde{\Lambda}_i]$ is zero  modulo $(t)$ by $[\Lambda_i,\Lambda_i]=0$, there exists $\Pi_i\in \Gamma(U_i,\wedge^3 T_X)$ such that $[\tilde{\Lambda}_i,\tilde{\Lambda}_i]=t\Pi_i$. Since $0=[\tilde{\Lambda}_i,[\tilde{\Lambda}_i,\tilde{\Lambda}_i]]=[\tilde{\Lambda}_i,t\Pi_i]=t[\Lambda_0,\Pi_i]=0$, we have $[\Lambda_0,\Pi_i]=0$. 

 Let $\tilde{f}_{ij}: \mathcal{O}_X(U_{ij})\otimes_k \tilde{A} \to \mathcal{O}_X(U_{ij})\otimes_k \tilde{A}$ be the ring homomorphism corresponding to $\tilde{\theta}_{ij}$. We will denote by $\tilde{f}_{ij}\Lambda_j$ be the induced skew symmetric biderivation structure on $\mathcal{O}_X(U_{ij})\otimes_k \tilde{A}$ such that $\tilde{f}_{ij}:(\mathcal{O}_X(U_{ij})\otimes_k\tilde{A},\tilde{\Lambda}_j)\to (\mathcal{O}_X(U_{ji})\otimes_k\tilde{A}, \tilde{f}_{ij}\Lambda_j)$ is skew symmetric biderivation-preserving.  Since $\tilde{f}_{ij}\tilde{\Lambda}_j$ and $\tilde{\Lambda}_i$ are same modulo $(t)$ by $(3)$, there exists $\Lambda_{ij}'\in \Gamma(U_{ij},\wedge^2 T_X)$ such that $t\Lambda_{ij}'=\tilde{f}_{ij}\tilde{\Lambda}_j-\tilde{\Lambda}_i$. Then $t\Lambda_{ji}'=\tilde{f}_{ji}\Lambda_i-\Lambda_j$. By applying $\tilde{f}_{ij}$ on both sides, we have $t\Lambda_{ji}'=\tilde{\Lambda}_i-\tilde{f}_{ij}\tilde{\Lambda}_j=-t\Lambda_{ij}'$. Hence $\Lambda_{ji}'=-\Lambda_{ij}'$. We note that $t\Pi_i-t\Pi_j=t\Pi_i-\tilde{f}_{ij}(t\Pi_j)=[\tilde{\Lambda}_i,\tilde{\Lambda}_i]-\tilde{f}_{ij}[\tilde{\Lambda}_j,\tilde{\Lambda}_j]=[\tilde{\Lambda}_i,\tilde{\Lambda}_i]-[\tilde{f}_{ij}\tilde{\Lambda}_j,\tilde{f}_{ij}\tilde{\Lambda}_j]=[\tilde{\Lambda}_i,\tilde{\Lambda}_i]-[\tilde{\Lambda}_i+t\Lambda_{ij}',\tilde{\Lambda}_i+t\Lambda_{ij}']=t[\Lambda_0,-2\Lambda_{ij}']$. Hence we have $\Pi_i-\Pi_j+[\Lambda_0, 2\Lambda_{ij}']=0$. So we have $-\delta(\{\Pi_i\})+[\Lambda_0,\{2\Lambda_{ij}'\}]=0$. In the following isomorphism
\begin{align*}
\tilde{\alpha}_{ijk}:U_{ijk}\times Spec(\tilde{A}) \xrightarrow{\tilde{\theta}_{ij}}U_{ijk}\times Spec(\tilde{A}) \xrightarrow{\tilde{\theta}_{jk}}U_{ijk}\times Spec(\tilde{A})\xrightarrow{\tilde{\theta}_{ki}}U_{ijk}\times Spec(\tilde{A})
\end{align*}
which corresponds to a $\tilde{d}_{ijk} \in \Gamma(U_{ijk},T_X)$. Then we have
\begin{align*}
Id+t\tilde{d}_{ijk}:\mathcal{O}_X(U_{ijk})\otimes_k \tilde{A} \xrightarrow{\tilde{f}_{ki}}\mathcal{O}_X(U_{ijk})\otimes_k \tilde{A} \xrightarrow{\tilde{f}_{jk}}\mathcal{O}_X(U_{ijk})\otimes_k \tilde{A} \xrightarrow{\tilde{f}_{ij}} \mathcal{O}_X(U_{ijk})\otimes_k \tilde{A}
\end{align*}
 $Id+t\tilde{d}_{ijk}: (\mathcal{O}_X(U_{ijk})\otimes \tilde{A} ,\tilde{\Lambda}_i) \to (\mathcal{O}_X(U_{ijk})\otimes_k \tilde{A} ,\tilde{f}_{ij}\tilde{f}_{jk}\tilde{f}_{ki}\tilde{\Lambda}_i)$ is an isomorphism compatible with skew-symmetric bidervations. We note that $\tilde{\Lambda}_i-\tilde{f}_{ij}\tilde{f}_{jk}\tilde{f}_{ki}\tilde{\Lambda}_i=\tilde{\Lambda}_i-\tilde{f}_{ij}\tilde{f}_{jk}(\tilde{\Lambda}_k+t\Lambda_{ki}')=\tilde{\Lambda}_i-\tilde{f}_{ij}(\tilde{\Lambda}_j+t\Lambda'_{jk}+\Lambda_{ki}')=\tilde{\Lambda}_i-(\tilde{\Lambda}_i+t\Lambda_{ij}'+t\Lambda_{jk}'+t\Lambda_{ki}')=-t(\Lambda_{ij}'+\Lambda_{jk}'+\Lambda_{ki}')$.  Hence by Lemma \ref{3l}, we have $-(\Lambda_{ij}'+\Lambda_{jk}'+\Lambda_{ki}')-[\Lambda_0, \tilde{d}_{ijk}]=0$. So we have $-\delta(\{\Lambda_{ij}'\})+[\Lambda_0,-\{\tilde{d}_{ijk}\}]=0$. Hence $\alpha=(\{\Pi_i\}, \{2\Lambda_{ij}'\},\{-2\tilde{d}_{ijk}\})\in C^0(\mathcal{U},\wedge^3 T_X)\oplus C^1(\mathcal{U},\wedge^2 T_X)\oplus C^2(\mathcal{U},T_X)$ is a $2$-cocyle in the following diagram 
\begin{center}
$\begin{CD}
\mathcal{C}^0(\mathcal{U},\wedge^4 T_X)\\
@A[\Lambda_0,-]AA\\
\mathcal{C}^0(\mathcal{U},\wedge^3 T_X)@>-\delta>>\mathcal{C}^1(\mathcal{U},\wedge^3 T_X)\\
@A[\Lambda_0,-]AA @A[\Lambda_0,-]AA\\
C^0(\mathcal{U},\wedge^2 T_X)@>\delta>> C^1(\mathcal{U},\wedge^2 T_X)@>-\delta>>\mathcal{C}^2(\mathcal{U},\wedge^2 T_X)\\
@A[\Lambda_0,-]AA @A[\Lambda_0,-]AA @A[\Lambda_0,-]AA\\
C^0(\mathcal{U},T_X)@>-\delta>>C^1(\mathcal{U},T_X)@>\delta>>C^2(\mathcal{U},T_X)@>-\delta>>\mathcal{C}^3(\mathcal{U},T_X)\\
\end{CD}$
\end{center}

We claim that given a different choice $\{\tilde{\theta}_{ij}' \}$ and $\{\tilde{\Lambda}_i'\}$ satisfying $(1),(2),(3)$, the associated $2$-cocycle $\beta=(\{\Pi_i'\}, \{2\Lambda_{ij}''\},\{-2\tilde{d}'_{ijk}\})\in C^0(\mathcal{U},\wedge^3 T_X)\oplus C^1(\mathcal{U},\wedge^2 T_X)\oplus C^2(\mathcal{U},T_X)$ is cohomologous to the $2$-cocycle $\alpha$ associated with $\{\tilde{\theta}_{ij}\}$ and $\{\tilde{\Lambda}_i\}$. Let $\tilde{f}'_{ij}:\mathcal{O}_X(U_{ij})\otimes \tilde{A}\to \mathcal{O}_X(U_{ij})\otimes \tilde{A}$ corresdpong to $\tilde{\theta}_{ij}'$. Then $\tilde{f}_{ij}'=\tilde{f}_{ij}+tp_{ij}$ for some $p_{ij}\in \Gamma(U_{ij},T_X)$ \footnote{Since $\tilde{f}'_{ij}-\tilde{f}_{ij}$ is zero modulo $t$, we have $(\tilde{f}'_{ij}-\tilde{f}_{ij})(x)=0+tp_{ij}(x)$ for some map $p_{ij}$. We show that $p_{ij}$ is a derivation. Indeed,  $tp_{ij}(xy)=(\tilde{f}_{ij}-f_{ij})(xy)=\tilde{f}_{ij}(x)(\tilde{f}_{ij}-f_{ij})(y)+(\tilde{f}_{ij}-f_{ij})(x)f_{ij}(y)=\tilde{f}_{ij}(x)tp_{ij}(y)+tp_{ij}(y)f_{ij}(y)=t(xp_{ij}(y)+yp_{ij}(x))$. So $p_{ij}$ is a derivation and so an element in $\Gamma(U_{ij},T_X)$.} and $\tilde{\Lambda}'_i=\tilde{\Lambda}_i+t\Lambda_i'$ for some $\Lambda_i'\in \Gamma(U_i,\wedge^2 T_X)$. For each $i,j,k$, $\tilde{\theta}_{ij}'\tilde{\theta}_{jk}'\tilde{\theta'}_{ik}^{-1}$ corresponds to the derivation $\tilde{d}'_{ijk}=\tilde{d}_{ijk}+(p_{ij}+p_{jk}-p_{ik})$. Hence $\delta(2\{p_{ij}\})=\{-2\tilde{d}_{ijk}-(-2\tilde{d}_{ijk}')\}$. We also note that  $t\Pi_i'=[\tilde{\Lambda}'_i,\tilde{\Lambda}'_i]=[\tilde{\Lambda}_i+t\Lambda_i',\tilde{\Lambda}_i+t\Lambda_i']=[\tilde{\Lambda}_i,\tilde{\Lambda}_i]+t[2\Lambda_i',\Lambda_0]=t\Pi_i+t[2\Lambda_i',\Lambda_0]$. Hence we have $[\Lambda_0,\{-2\Lambda_i'\}]=\{\Pi_i-\Pi_i'\}$. Since $t\Lambda_{ij}'=\tilde{f}_{ij}\tilde{\Lambda}_j-\tilde{\Lambda}_i$, and $ t\Lambda_{ij}''=\tilde{f}_{ij}'\tilde{\Lambda}_j'-\tilde{\Lambda}_i'=\tilde{f}_{ij}\tilde{\Lambda}_j'+t[p_{ij},\tilde{\Lambda}_j']-\tilde{\Lambda}_i'=\tilde{f}_{ij}\tilde{\Lambda}_j+t\Lambda_j'+t[p_{ij},\Lambda_0]-\tilde{\Lambda}_i-t\Lambda_i'$, we have $\Lambda_{ij}'-\Lambda_{ij}''=-\Lambda_j'+[\Lambda_0,p_{ij}]+\Lambda_i'$. So $\delta(\{-2\Lambda_i'\})+[\Lambda_0,\{2p_{ij}\}]=2\Lambda_{ij}'-2\Lambda_{ij}''.$ Hence $(\{-2\Lambda_i'\}, \{2p_{ij}\})$ is mapped to $\alpha-\beta$ so that $\alpha$ and $\beta$ are cohomologous. So given a deformation $\xi$ and a small extension $e:0\to (t)\to \tilde{A}\to A\to 0$, we can associate an element $o_{\xi}(e):=$ the cohomology class of $\alpha \in \mathbb{H}^2(X,\Lambda_0,T_X^\bullet)$. We also note that $o_{\xi}(e)=0$ if and only if there exists a collection of $\{\tilde{\theta}_{ij}\}$ and $\{\tilde{\Lambda}_i\}$ satisfying $(2),(3)$ with $[\tilde{\Lambda}_i,\tilde{\Lambda}_i]=0$ (which means $\tilde{\Lambda}_i$ defines a Poisson structure), $\Lambda_{ij}'=0$ (which implies $\tilde{f}_{ij}\tilde{\Lambda}_j=\tilde{\Lambda}_i$) and $\tilde{d}_{ijk}=0$ (which means $(1)$) if and only if there is a lifting $\tilde{\xi}$.

\end{proof}

\begin{definition}
The Poisson deformation $\xi$ is called unobstructed if $o_{\xi}$ is the zero map, otherwise $\xi$ is called obstructed. $(X,\Lambda_0)$ is unobstructed if every infinitesimal deformation of $(X,\Lambda_0)$ is unobstructed, otherwise $(X,\Lambda_0)$ is obstructed.
\end{definition}

\begin{corollary}
A nonsingular Poisson variety $(X,\Lambda_0)$ is unobstructed if $\mathbb{H}^2(X,\Lambda_0,T_X^\bullet)=0$.
\end{corollary}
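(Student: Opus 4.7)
The plan is to deduce this corollary as an immediate consequence of Proposition \ref{lifting} together with the definition of "unobstructed" just given. The content of the statement is a purely formal transfer of information: if the target group of the obstruction map is zero, then the obstruction map itself is zero, and hence every infinitesimal Poisson deformation is unobstructed.

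Concretely, I would argue as follows. Fix an arbitrary $A\in\bold{Art}$ and an arbitrary infinitesimal Poisson deformation $\xi=(\mathcal{X},\Lambda)$ of $(X,\Lambda_0)$ over $A$. By the definition preceding this corollary, showing that $\xi$ is unobstructed amounts to showing that the obstruction map $o_\xi$ is the zero map on the set of all small extensions of $A$. Now let $e:0\to(t)\to\tilde{A}\to A\to 0$ be any small extension in $\bold{Art}$. Proposition \ref{lifting} attaches to the pair $(\xi,e)$ a well-defined cohomology class
\[
o_\xi(e)\in \mathbb{H}^2(X,\Lambda_0,T_X^\bullet),
\]
whose vanishing is equivalent to the existence of a lifting $\tilde{\xi}$ of $\xi$ to $\tilde{A}$. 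Under the hypothesis $\mathbb{H}^2(X,\Lambda_0,T_X^\bullet)=0$, the target of this obstruction map is the zero group, so necessarily $o_\xi(e)=0$ for every small extension $e$. Thus $o_\xi$ is the zero map, and a lifting of $\xi$ exists over every small extension.

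Since $\xi$ and $e$ were arbitrary, every infinitesimal Poisson deformation of $(X,\Lambda_0)$ is unobstructed, so by definition $(X,\Lambda_0)$ itself is unobstructed. There is no genuine obstacle here: the proof is a single invocation of Proposition \ref{lifting}, and the only thing to verify carefully is that the definition of "unobstructed" quantifies over all $\xi$ and all small extensions $e$, both of which are handled uniformly by the vanishing hypothesis on $\mathbb{H}^2(X,\Lambda_0,T_X^\bullet)$.
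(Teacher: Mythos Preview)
Your proof is correct and matches the paper's approach: the paper does not supply an explicit proof for this corollary, treating it as an immediate consequence of Proposition~\ref{lifting} and the definition of ``unobstructed'' just above it, which is exactly what you have written out.
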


\begin{example}
Let $(X,\Lambda_0)$ be a nondegenerate Poisson $K3$ surface. In other words, $(X,\Lambda_0)$ is symplectic. Since $i_{\Lambda_0}:\Omega_X^1\to T_X$ is isomorphic, $\mathbb{H}^i(X,\Lambda_0,T_X^\bullet)$ is isomorphic to $\mathbb{H}^i(X,\Omega_X^\bullet)$, where $\Omega_X^\bullet:\Omega_X^1\xrightarrow{\partial} \wedge^2 \Omega_X^1\xrightarrow{\partial} \wedge^3\Omega_X^1\xrightarrow{\partial}\cdots.$ By using the exact sequence of complex of sheaves $0\to \Omega_X^{\bullet-1}\to\mathcal{O}_X^\bullet \to \mathcal{O}_X\to 0$, where $\Omega_X^{\bullet-1}:0\to \Omega_X^1\xrightarrow{\partial }\wedge^2 \Omega_X^1\xrightarrow{\partial}\cdots$, and $\mathcal{O}_X^\bullet:\mathcal{O}_X\xrightarrow{\partial}\Omega_X^1\xrightarrow{\partial}\wedge^2 \Omega_X^1\xrightarrow{\partial}\cdots$, we get $\mathbb{H}^1(X,\Omega_X^\bullet)=21$ and $\mathbb{H}^2(X,\Omega_X^\bullet)=0$. Hence a symplectic $K3$ surface is unobstructed. Given a cocycle $\{p_{ij}\}\in \mathcal{C}^1(\mathcal{U},\Omega_X^1)$, there exists $\{\Lambda_i \}\in \mathcal{C}^0(\mathcal{U},\wedge^2 \Omega_X^1)$ such that $\delta(\{\Lambda_i\})=\{\partial{p}_{ij}\}$ since $H^1(X,\wedge^2 \Omega_X^1)=0$. Hence we get a surjection $\mathbb{H}^1(X,\Omega_X^\bullet)\to H^1(X,\Omega_X^1) $. In other words, any first-order flat deformation extends to a first-order Poisson deformation. Similarly, for a trivial Poisson $K3$ surface, we get the same situation. 
\end{example}

\begin{proposition}\label{3rigid}
A nonsingular Poisson variety $(X,\Lambda_0)$ is rigid if and only if $\mathbb{H}^1(X,\Lambda_0,T_X^\bullet)=0$.\footnote{The author could not find any example of rigid Poisson varieties.}
\end{proposition}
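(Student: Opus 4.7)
The plan is to prove both directions, one essentially immediate from the first-order classification, and the other by induction on the length of $A$ using a torsor argument along small extensions.

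For the forward direction, suppose $(X,\Lambda_0)$ is rigid. Then in particular every first-order Poisson deformation is trivial, i.e.\ $Def_{(X,\Lambda_0)}(k[\epsilon])$ consists of the single trivial class. By Proposition \ref{3q}, the map $\kappa$ sets up a bijection between $Def_{(X,\Lambda_0)}(k[\epsilon])$ and $\mathbb{H}^1(X,\Lambda_0,T_X^\bullet)$ with the trivial class corresponding to $0$, so $\mathbb{H}^1(X,\Lambda_0,T_X^\bullet)=0$.

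For the converse, assume $\mathbb{H}^1(X,\Lambda_0,T_X^\bullet)=0$. I would argue by induction on $\dim_k A$ that every infinitesimal Poisson deformation $\xi$ of $(X,\Lambda_0)$ over $A\in\bold{Art}$ is trivial. The case $A=k$ is vacuous. For the inductive step, pick a small extension $e:0\to(t)\to A\to A'\to 0$ with $\dim_k A'<\dim_k A$. By the inductive hypothesis, the restriction $\xi'$ of $\xi$ to $A'$ is Poisson-isomorphic to the trivial deformation over $A'$. Choosing such an isomorphism and pulling back, I may assume $\xi$ itself is a lifting of the trivial deformation $(X\times\operatorname{Spec}(A'),\Lambda_0)$ to $\tilde A=A$ in the sense of the paragraph preceding Proposition \ref{lifting}. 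The trivial deformation over $A$ is another such lifting, so the set of isomorphism classes of liftings of $\xi'$ to $A$ is non-empty.

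The key step is then the standard torsor statement: when non-empty, the set of isomorphism classes of liftings of a fixed deformation $\xi'/A'$ across the small extension $e$ is a principal homogeneous space under $\mathbb{H}^1(X,\Lambda_0,T_X^\bullet)$. Granting this, the hypothesis forces the set to be a singleton, so $\xi$ is isomorphic to the trivial deformation over $A$ and the induction goes through.

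The main obstacle, and the only real work, is verifying this torsor structure in the Poisson setting. The argument is a direct variant of the cocycle computations in Proposition \ref{3q} and Proposition \ref{lifting}: fix an affine open covering $\mathcal U=\{U_i\}$ and local Poisson trivializations; given two liftings $\tilde\xi_1,\tilde\xi_2$ with local bivectors $\tilde\Lambda_i^{(1)},\tilde\Lambda_i^{(2)}$ and transition Poisson automorphisms $\tilde\theta_{ij}^{(1)},\tilde\theta_{ij}^{(2)}$, the differences $\tilde\Lambda_i^{(1)}-\tilde\Lambda_i^{(2)}=t\Lambda_i'$ and $\tilde\theta_{ij}^{(1)}(\tilde\theta_{ij}^{(2)})^{-1}=\mathrm{Id}+t\,p_{ij}$ yield a pair $(\{p_{ij}\},\{-\Lambda_i'\})$ in $C^1(\mathcal U,T_X)\oplus C^0(\mathcal U,\wedge^2 T_X)$. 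The cocycle conditions $[\Lambda_0,-\Lambda_i']=0$, $\delta\{p_{ij}\}=0$, and $\delta\{-\Lambda_i'\}+[\Lambda_0,\{p_{ij}\}]=0$ follow by applying Lemma \ref{3l} exactly as in the proof of Proposition \ref{3q}, and a change of trivializations of $\tilde\xi_1,\tilde\xi_2$ alters this by a coboundary in the total complex, giving a well-defined class in $\mathbb{H}^1(X,\Lambda_0,T_X^\bullet)$ vanishing iff $\tilde\xi_1\cong\tilde\xi_2$ over $A$. Conversely any such class can be used to twist a given lifting to produce another, so the action is free and transitive. With this verified the proposition follows.
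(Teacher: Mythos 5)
Your proposal is correct and follows essentially the same route as the paper: the forward direction via Proposition \ref{3q}, and the converse by induction on $\dim_k A$ using uniqueness of liftings across a small extension, which the paper establishes by exactly the difference-cocycle computation you describe (forming $(\{\Lambda_i'\},\{-p_{ij}\})$, trivializing it by some $\{a_i\}$ when $\mathbb{H}^1(X,\Lambda_0,T_X^\bullet)=0$, and gluing the local maps $Id+ta_i$). The only cosmetic difference is that the paper states this as ``at most one lifting'' rather than as a torsor; note that only transitivity of the $\mathbb{H}^1$-action is needed, and your added claim of freeness is superfluous (and in general requires $\mathbb{H}^0(X,\Lambda_0,T_X^\bullet)=0$, cf.\ Proposition \ref{prorepre1}).
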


\begin{proof}
Assume that $(X,\Lambda_0)$ is rigid. Since any infinitesimal Poisson deformation  (in particular, any first order Poisson deformations) are trivial, $\mathbb{H}^1(X,\Lambda_0,T_X^\bullet)=0$ by Proposition \ref{3q}. Assume that $\mathbb{H}^1(X,\Lambda_0,T_X^\bullet)=0$. First we claim that given an infinitesimal Poisson deformation $\eta$ of $(X,\Lambda_0)$ over $A\in \bold{Art}$ and a small extension $e:0\to (t)\to \tilde{A}\to A\to 0$, any two liftings $ \xi, \tilde{\xi}$ of $\eta$ to $\tilde{A}$ are equivalent. Let $\{U_i\}$ be an affine open covering of $\xi=(\mathcal{X},\Lambda)$ and $\tilde{\xi}=(\tilde{\mathcal{X}},\tilde{\Lambda})$. Let $\{\theta_i\}$ where $\theta_i :U_i\times Spec(\tilde{A})\to \mathcal{X}|_{U_i} $, $\{\Lambda_i\}$ where $\Lambda_i$ is the Poisson structure on $U_i\times Spec(\tilde{A})$ induced from from $\Lambda|_{U_i}$ and let $\theta_{ij}=\theta_j^{-1}\theta_i$. Let $\{\tilde{\theta}_i\}$  where $\tilde{\theta}_i :U_i\times Spec(\tilde{A})\to \mathcal{\tilde{X}}|_{U_i} $, $\{\tilde{\Lambda}_i\}$ where the induced Poisson structure from $\tilde{\Lambda}$ on $U_i\times Spec(\tilde{A})$ and let $\tilde{\theta}_{ij}=\tilde{\theta}_j^{-1}\tilde{\theta}_i$. Let $f_{ij}:(\mathcal{O}_X({U_{ij}})\otimes\tilde{A},{\Lambda}_j )\to (\mathcal{O}_X(U_{ij})\otimes \tilde{A},\Lambda_i)$ be the homomorphism corresponding to $\theta_{ij}$ and $\tilde{f}_{ij}:(\mathcal{O}_X(U_{ij})\otimes \tilde{A},\tilde{\Lambda}_j)\to (\mathcal{O}_X(U_{ij})\otimes \tilde{A},\tilde{\Lambda}_i)$ corresponding to $\tilde{\theta}_{ij}$. Since $\xi,\tilde{\xi}$ induce the same Poisson deformation $\eta$ over $A$, we have
\begin{center}
$\tilde{f}_{ij}=f_{ij}+tp_{ij}$,$\,\,\,\,\,\,\,\,\,\,\, \tilde{\Lambda}_i=\Lambda_i+t\Lambda_i'$
\end{center}
for some $p_{ij}\in \Gamma(U_{ij},T_X)$.
Then for all $i,j,k$ we have $p_{ij}+p_{jk}-p_{ik}=0$. Since $0=[\tilde{\Lambda}_i,\tilde{\Lambda}_i]=[\Lambda_i+t\Lambda_i',\Lambda_i+t\Lambda_i']=2t[\Lambda'_i,\Lambda_0]$, we have $[\Lambda_0,\Lambda_i']=0$. Since $f_{ij}\Lambda_j=\Lambda_i$ and $\tilde{f}_{ij}\tilde{\Lambda}_j=\tilde{\Lambda}_i$,  we have ${\Lambda}_i+t\Lambda_i'=\tilde{\Lambda}_i=\tilde{f}_{ij}\tilde{\Lambda}_j=(f_{ij}+tp_{ij})(\Lambda_j+t\Lambda_j')=\Lambda_i-t[\Lambda_0,p_{ij}]+t\Lambda_j'$. Hence we have $\Lambda_j'-\Lambda_i'+[\Lambda_0,-p_{ij}]=0$. Hence $(\{\Lambda_i'\},\{- p_{ij}\})$ defines a cocylce. Since $\mathbb{H}^1(X,\Lambda_0,T_X^\bullet)=0$, there exists $\{a_i\}\in \mathcal{C}^0(\mathcal{U},T_X)$ such that $[\Lambda_0,a_i]=\Lambda_i'$ and $a_j-a_i=p_{ij}$. Now we explicitly construct a Poisson isomorphism $(\tilde{\mathcal{X}},\tilde{\Lambda}) \cong (\mathcal{X},\Lambda)$. We define a Poisson isomorphism locally on $U_i\times Spec(\tilde{A})$, and show that each map glue together to give a Poisson isomorphism $(\tilde{\mathcal{X}},\tilde{\Lambda}) \cong (\mathcal{X},\Lambda)$. We claim that $(U_i\times Spec(\tilde{A}),\Lambda_i)\to ( U_i\times Spec(\tilde{A}), \tilde{\Lambda}_i)$ is a Poisson isomorphism induced from $Id+ta_i:(\mathcal{O}_X(U_i)\otimes_k \tilde{A},\tilde{\Lambda}_i)\to (\mathcal{O}_X(U_i)\otimes_k \tilde{A},\Lambda_i)$. The inverse map is $Id-ta_{i}$. Since $\tilde{\Lambda}_i+t[a_i,\tilde{\Lambda}_i]=\tilde{\Lambda}_i+t[a_i,\Lambda_0]=\Lambda_i+t\Lambda_i'+t[a_i,\Lambda_0]=\Lambda_i$, $Id+ta_i$ is Poisson. We show that each Poisson isomorphism $\{Id+ta_i\}$ glues together to give a Poisson isomorphism $(\tilde{\mathcal{X}},\tilde{\Lambda}) \cong (\mathcal{X},\Lambda)$. Indeed, it is sufficient to show that the following diagram commutes.
\begin{center}
$\begin{CD}
(\mathcal{O}_X(U_{ij})\otimes_k Spec(\tilde{A}), \tilde{\Lambda}_i )@> Id+ta_i >> (\mathcal{O}_X(U_{ij})\otimes_k \tilde{A},\Lambda_i)\\
@A\tilde{f}_{ij}AA@AA f_{ij}A\\
(\mathcal{O}_X(U_{ij})\otimes Spec(\tilde{A}), \tilde{\Lambda}_j)@> Id+ta_j >> (\mathcal{O}_X(U_{ij})\otimes_k \tilde{A},\Lambda_j)
\end{CD}$
\end{center}
Indeed, the diagram commutes if and only if $(Id+ta_i)\circ \tilde{f}_{ij}=f_{ij}\circ (Id+ta_j)$ if and only if $\tilde{f}_{ij}+ta_i=f_{ij}+ta_j$ if and only if $p_{ij}=a_j-a_i$. Hence there is at most one lifting of $\eta$.

Now we prove that if $\mathbb{H}^1(X,\Lambda_0,T_X^\bullet)=0$, then $(X,\Lambda_0)$ is rigid. We will prove by induction on the dimension on $(A,\mathfrak{m})\in \bold{Art}$. For $A$ with $dim_k A=2$, then any first order Poisson deformation is trivial. Let's assume that any infinitesimal Poisson deformation of $(X,\Lambda_0)$ over $A$ with $dim_k A\leq n-1$ is trivial.  Let $\xi$ be an infinitesimal Poisson deformation of $(X,\Lambda_0)$ over $A$ with  $dim_k A=n$ such that $\mathfrak{m}^{p-1}\ne 0$ and $\mathfrak{m}^p=0$. Choose an element $t\ne 0\in \mathfrak{m}^{p-1}$. Then $0\to (t)\to A\to A/(t)\to 0$ is a small extension and $dim_k A/(t)\leq n-1$. Hence induced Poisson deformation $\bar{\xi}$ over $A/(t)$ from $\xi$ is trivial by induction hypothesis. Since $\xi$ is a lifting of $\bar{\xi}$, and trivial Poisson deformation over $A$ is also a lifting of $\bar{\xi}$, $\xi$ is trivial since we have at most one lifting of $\bar{\xi}$.
\end{proof}

\begin{proposition}\label{3lm}
Let $(X,\Lambda_0)$ be nonsingular projective Poisson variety with $\mathbb{H}^0(X,\Lambda_0,T_X^\bullet)=0$. Then for any infinitesimal Poisson deformation $(\mathcal{X},\Lambda)$ of $(X,\Lambda_0)$ over $A$ for any $A\in \bold{Art}$,
\begin{align*}
Aut((\mathcal{X},\Lambda)/(X,\Lambda_0))=Id,
\end{align*}
where $Aut((\mathcal{X},\Lambda)/(X,\Lambda_0)):=$ the set of Poisson automorphisms of $(\mathcal{X},\Lambda)$ restricting to the identity Poisson automorphism of $(X,\Lambda_0)$.
\end{proposition}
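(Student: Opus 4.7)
The plan is to proceed by induction on $\dim_k A$, using Lemma \ref{3l} locally to manufacture a global Poisson derivation and then invoking the vanishing of $\mathbb{H}^0(X,\Lambda_0,T_X^\bullet)$ to force it to be zero. The case $A = k$ is trivial since then $(\mathcal{X},\Lambda) = (X,\Lambda_0)$. For the inductive step, I would choose a small extension $0 \to (t) \to A \to A/(t) \to 0$, which always exists by picking a nonzero $t$ in the least nonzero power of $\mathfrak{m}$ (automatically annihilated by $\mathfrak{m}$). Given $\theta \in \mathrm{Aut}((\mathcal{X},\Lambda)/(X,\Lambda_0))$, its reduction over $A/(t)$ is a Poisson automorphism of the induced deformation restricting to the identity on $(X,\Lambda_0)$, so by the inductive hypothesis it equals the identity; hence $\theta$ itself reduces to the identity modulo $t$.

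Next I would extract a global Poisson derivation from $\theta$. Choose an affine open covering $\mathcal{U} = \{U_i\}$ of $X$ on which $\mathcal{X}|_{U_i} \cong U_i \times \mathrm{Spec}(A)$ with induced local Poisson structures $\Lambda_i \in \Gamma(U_i, \wedge^2 T_X)\otimes_k A$, exactly as in the proof of Proposition \ref{3q}. On each $U_i$, $\theta|_{U_i}$ is a Poisson automorphism of $(\mathcal{O}_X(U_i)\otimes_k A, \Lambda_i)$ reducing to the identity modulo $t$; applying Lemma \ref{3l} with $\Lambda_1 = \Lambda_2 = \Lambda_i$ (so that $\Lambda' = 0$) yields a Poisson derivation $P_i \in \Gamma(U_i, T_X)$ with $[\Lambda_0, P_i] = 0$ such that $\theta|_{U_i}$ corresponds to $\mathrm{Id} + t P_i$. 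Because $t \cdot \mathfrak{m} = 0$, the operator $(\theta - \mathrm{Id})/t$ is a well-defined global $k$-linear endomorphism of $\mathcal{O}_X$ (the ambiguity in lifting a section $x \in \mathcal{O}_X$ to $\mathcal{O}_{\mathcal{X}}$ lies in $\mathfrak{m}\mathcal{O}_{\mathcal{X}}$, and $(\theta-\mathrm{Id})(\mathfrak{m}\mathcal{O}_{\mathcal{X}}) \subset t\mathfrak{m}\mathcal{O}_{\mathcal{X}} = 0$). Consequently the local $P_i$ agree on overlaps, gluing to a global $P \in H^0(X,T_X)$ with $[\Lambda_0, P] = 0$, i.e.\ a class in $\mathbb{H}^0(X,\Lambda_0,T_X^\bullet)$.

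Since $\mathbb{H}^0(X,\Lambda_0,T_X^\bullet) = 0$ by hypothesis, $P = 0$; running Lemma \ref{3l} in reverse on each $U_i$ gives $\theta|_{U_i} = \mathrm{Id}$, whence $\theta = \mathrm{Id}$ globally. The only real subtlety is the gluing of the local $P_i$, which ultimately rests on the small-extension identity $t \cdot \mathfrak{m} = 0$ making $(\theta - \mathrm{Id})/t$ independent of local trivialization; the remainder is a clean reduction of the automorphism problem to the cohomological hypothesis, paralleling the classical argument for nonsingular varieties with $H^0(X,T_X) = 0$ in \cite{Ser06}.
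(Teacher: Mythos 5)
Your proposal is correct and follows essentially the same route as the paper: induction on $\dim_k A$ via a small extension $0\to (t)\to A\to A/(t)\to 0$, local application of Lemma \ref{3l} with $\Lambda_1=\Lambda_2=\Lambda_i$ to produce Poisson derivations $P_i$ with $[\Lambda_0,P_i]=0$, gluing them (using $t\cdot\mathfrak{m}=0$) into a class in $\mathbb{H}^0(X,\Lambda_0,T_X^\bullet)$, and concluding from the vanishing hypothesis. The paper phrases the gluing as $d_i=d_j$ coming from the compatibility $g_i\cdot f_{ij}=f_{ij}\cdot g_j$, which is the same observation you make via the well-definedness of $(\theta-\mathrm{Id})/t$.
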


\begin{proof}
We prove by the induction on the dimension of $A$. Let $dim_k\,A=1$. Then $A=k$. So we have nothing to prove. Let's assume that the proposition holds for $A$ with $dim_k\,A\leq n-1$. Let $dim_k A=n$ and $(\mathcal{X},\Lambda)$ be an infinitesimal Poisson deformation of $(X,\Lambda_0)$ over $A$. Assume that the maximal ideal $\mathfrak{m}$ of $A$ satisfies $\mathfrak{m}^{p-1}\ne 0$ and $\mathfrak{m}^p=0$. Choose $t\ne 0\in \mathfrak{m}^{p-1}$. Then $A/(t) \in \bold{Art}$ with $dim_k\,A/(t)\leq n-1$ and $0\to (t)\to A\to A/(t)\to 0$ is a small extension. Now let $g:(\mathcal{X},\Lambda)\to (\mathcal{X},\Lambda)$ be a Poisson automorphism restricting to the identity Poisson automorphism of $(X,\Lambda_0)$. Let $\{U_i\}$ be an affine cover of $(\mathcal{X},\Lambda)$. Let $\{\theta_i\}$ where $\theta_i:U_i\times Spec({A})\to \mathcal{X}|_{U_i}$, $\{\Lambda_i\}$ where $\Lambda_i$ is the Poisson structure on $U_i\times Spec(A)$ induced from $\Lambda|_{U_i}$ via $\theta_i$ and let $\theta_{ij}={\theta}_j^{-1}{\theta}_i$ which corresponds to a Poisson homomorphism ${f}_{ij}:(\mathcal{O}_X(U_{ij})\otimes_k {A},{\Lambda}_j)\to (\mathcal{O}_X(U_{ij})\otimes_k {A}, {\Lambda}_i)$. Then $f$ can be described by the data $\{g_i\}$, where $g_{i}: (\mathcal{O}_X(U_i)\otimes A,\Lambda_i)  \to (\mathcal{O}_X(U_i)\otimes A,\Lambda_i)$ which is a Poisson automorphism and $g_i\cdot f_{ij}=f_{ij}\cdot g_j$.
Since by the induction hypothesis, $g_i$ induce the identity on $\mathcal{O}_X(U_i)\otimes A/(t)$. $g_i$ is of the form $g_i=Id+td_i$, where $d_i\in Der_k(\mathcal{O}_X(U_i),\mathcal{O}_X(U_i))$ with $d_i=d_j$ and $[\Lambda_0,d_i]=0$ by Lemma \ref{3l}. Hence $\{d_i\}\in \mathbb{H}^0(X,\Lambda_0,T_X^\bullet)$. Since $\mathbb{H}^0(X,\Lambda_0,T_X^\bullet)=0$, we have $d_i=0$. Hence $g_i$ is the identity. So $g$ is the identity. This proves the proposition \ref{3lm}.
\end{proof}

\begin{proposition}\label{prorepre1}
Let $e:0\to (t)\to \tilde{A}\xrightarrow{\mu} A\to 0$ be a small extension in $\bold{Art}$. Let $p:=Def_{(X,\Lambda_0)}(\mu):Def_{(X,\Lambda_0)}(\tilde{A})\to Def_{(X,\Lambda_0)}(\tilde{A})$. Then Given $\xi=(\mathcal{L},\nabla_{L})\in Def_{(L,\nabla)}(A)$, there is a transitive action of $\mathbb{H}^1(X,\Lambda_0,\mathcal{O}_X^\bullet)$ on $p^{-1}(\xi)$. Moreover, if $\mathbb{H}^0(X,\Lambda_0,T_X^\bullet)=0$, the action is free.
\end{proposition}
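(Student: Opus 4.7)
The plan is to run the standard Schlessinger-type argument on the \v{C}ech side, recycling the cocycle computations already performed in Propositions \ref{lifting} and \ref{3rigid}. Throughout I interpret $p^{-1}(\xi)$ in the natural way consistent with this section: as the set of isomorphism classes of liftings of a Poisson deformation $\xi=(\mathcal{X},\Lambda)\in Def_{(X,\Lambda_0)}(A)$ to $\tilde{A}$, with $\mathbb{H}^1(X,\Lambda_0,T_X^\bullet)$ acting on it (the statement as printed appears to conflate this setting with the Poisson invertible sheaf setting of later sections).

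First I would fix an affine open cover $\mathcal{U}=\{U_i\}$ of $X$ trivializing $\xi$, with gluing data $(\{f_{ij}\},\{\Lambda_i\})$; any lifting $\tilde{\xi}\in p^{-1}(\xi)$ is then represented by data $(\{\tilde{f}_{ij}\},\{\tilde{\Lambda}_i\})$ satisfying the three lifting conditions $(1)$--$(3)$ from the proof of Proposition \ref{lifting}. Given a \v{C}ech representative $(\{\Lambda_i'\},\{-p_{ij}\})$ of a class $\sigma\in \mathbb{H}^1(X,\Lambda_0,T_X^\bullet)$, I would define
\[
\sigma\cdot\tilde{\xi}:=\bigl(\{\tilde{f}_{ij}+tp_{ij}\},\{\tilde{\Lambda}_i+t\Lambda_i'\}\bigr).
\]
The cocycle conditions $p_{ij}+p_{jk}-p_{ik}=0$, $[\Lambda_0,\Lambda_i']=0$, and $\Lambda_j'-\Lambda_i'=[\Lambda_0,p_{ij}]$ translate term-by-term into the corresponding lifting conditions for the modified data, so $\sigma\cdot\tilde{\xi}$ is again a lifting of $\xi$ to $\tilde{A}$.

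Well-definedness on isomorphism classes and transitivity both follow from the computation already carried out in Proposition \ref{3rigid}. If $(\{\Lambda_i'\},\{-p_{ij}\})$ is a coboundary of some $\{a_i\}\in C^0(\mathcal{U},T_X)$, the local isomorphisms $\mathrm{Id}+ta_i$ constructed there glue along the modified transitions to produce a Poisson isomorphism $\sigma\cdot\tilde{\xi}\cong\tilde{\xi}$ over $\tilde{A}$ inducing the identity on $(X,\Lambda_0)$, so the action descends to isomorphism classes. Conversely, given two liftings $\tilde{\xi}_1,\tilde{\xi}_2\in p^{-1}(\xi)$, writing their gluing data as $\tilde{f}^{(2)}_{ij}=\tilde{f}^{(1)}_{ij}+tp_{ij}$ and $\tilde{\Lambda}^{(2)}_i=\tilde{\Lambda}^{(1)}_i+t\Lambda_i'$, the same calculation verifies that $(\{\Lambda_i'\},\{-p_{ij}\})$ is a cocycle whose class $\sigma$ satisfies $\sigma\cdot\tilde{\xi}_1=\tilde{\xi}_2$, giving transitivity.

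The main obstacle is freeness. Suppose $\mathbb{H}^0(X,\Lambda_0,T_X^\bullet)=0$ and $\sigma\cdot\tilde{\xi}\cong\tilde{\xi}$ via a Poisson isomorphism $\phi$ over $\tilde{A}$ inducing the identity on $(X,\Lambda_0)$. My plan is to restrict $\phi$ down to $\xi=(\mathcal{X},\Lambda)$ over $A$: the restriction $\phi|_{\xi}$ is a Poisson automorphism of $\xi$ over $(X,\Lambda_0)$, and Proposition \ref{3lm} forces $\phi|_{\xi}=\mathrm{Id}$ under the hypothesis. Hence locally $\phi|_{U_i}=\mathrm{Id}+ta_i$ for some $a_i\in\Gamma(U_i,T_X)$, and translating the compatibility of $\phi$ with both the modified gluing data and the modified Poisson structures through Lemmas \ref{3l} and \ref{3n} yields $a_j-a_i=p_{ij}$ and $[\Lambda_0,a_i]=\Lambda_i'$, exhibiting $(\{\Lambda_i'\},\{-p_{ij}\})$ as a coboundary and forcing $\sigma=0$. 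The essential role of $\mathbb{H}^0(X,\Lambda_0,T_X^\bullet)=0$ is precisely to kill $\phi|_{\xi}$ via Proposition \ref{3lm}: without it, a nontrivial $\phi|_{\xi}$ would inject an extra Poisson derivation into the cochain $\{a_i\}$ and obstruct the coboundary conclusion.
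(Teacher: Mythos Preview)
Your proposal is correct and follows essentially the same route as the paper: define the action by $\tilde{\xi}\mapsto(\{\tilde{f}_{ij}+tp_{ij}\},\{\tilde{\Lambda}_i+t\Lambda_i'\})$, verify well-definedness and transitivity via the cocycle computations of Proposition~\ref{3rigid}, and deduce freeness from Proposition~\ref{3lm} by restricting the equivalence $\phi$ to $A$. The paper's proof is only a sketch (it defers details to \cite{Kim14}), but your elaboration of the freeness step---using Proposition~\ref{3lm} to force $\phi|_{\xi}=\mathrm{Id}$ before reading off the coboundary $\{a_i\}$---is exactly what the paper intends.
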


\begin{proof}
We will define a group action $G:\mathbb{H}^1(X,\Lambda_0,T_X^\bullet)\times p^{-1}(\xi)\to p^{-1}(\xi)$. Let $\tilde{\xi}=(\tilde{\mathcal{X}},\tilde{\Lambda})$ be a lifting of $\xi=(\mathcal{X},\Lambda)$ which is represented by $\tilde{f}_{ij}:(\mathcal{O}_X\otimes_k \tilde{A},\tilde{\Lambda}_i)\to (\mathcal{O}_X(U_{ij})\otimes_k \tilde{A},\tilde{\Lambda}_j)$ for an affine open cover $\{U_i\}$. Let $v=(\{-\Lambda_i'\},\{p_{ij}\} )\in \mathbb{H}^1(X,\Lambda_0,T_X^\bullet)$. Then we define $G(v,\tilde{\xi}):=\tilde{\xi}'$ which is represented by $\tilde{f}_{ij}+tp_{ij}$ and $\Lambda_i+t\Lambda_i'$. Then we can show that $G$ is well-defined and transitive. If $\mathbb{H}^0(X,\Lambda_0,T_X^\bullet)=0$, the action is free by Proposition \ref{3lm}. For the detail, we refer to the third part of the author's Ph.D thesis \cite{Kim14}.
\end{proof}

\begin{theorem}\label{ppro}
Let $(X,\Lambda_0)$ be a nonsingular projective Poisson variety with $\mathbb{H}^0(X,\Lambda_0,T_X^\bullet)=0$. Then the functor $Def_{(X,\Lambda_0)}$ is pro-representable.
\end{theorem}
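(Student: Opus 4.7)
The plan is to invoke Schlessinger's criterion for pro-representability and verify the four conditions (H1)--(H4) for $Def_{(X,\Lambda_0)}$. Since $Def_{(X,\Lambda_0)}(k)$ is a singleton, pro-representability then follows.

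First I would verify (H1) and (H2). Given a small extension $A''\to A$ and another morphism $A'\to A$ in $\bold{Art}$, together with Poisson deformations $\xi'=(\mathcal{X}',\Lambda')$ over $A'$ and $\xi''=(\mathcal{X}'',\Lambda'')$ over $A''$ whose restrictions to $A$ are isomorphic, I would choose a common affine open cover $\{U_i\}$ trivializing both, as in the proof of Proposition \ref{3q}, and construct a Poisson deformation over $A'\times_A A''$ whose local Poisson bivectors and transition isomorphisms are the fiber-product combinations of those of $\xi'$ and $\xi''$. The Schouten condition $[\Lambda,\Lambda]=0$ and the cocycle identities hold componentwise, which produces a preimage in $Def_{(X,\Lambda_0)}(A'\times_A A'')$ and proves surjectivity. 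When $A=k$ and $A''=k[\epsilon]$, the gluing isomorphism over $k$ is forced to be the identity, so (H2) is immediate.

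For (H3), $\mathbb{H}^1(X,\Lambda_0,T_X^\bullet)$ is computed by a hypercohomology spectral sequence with $E_1^{p,q}=H^q(X,\wedge^{p+1}T_X)$; each term is finite-dimensional because $X$ is projective and $\wedge^{p+1}T_X$ is coherent. Hence the tangent space $\mathbb{H}^1(X,\Lambda_0,T_X^\bullet)$ is finite-dimensional.

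The main obstacle is (H4), namely injectivity of $Def_{(X,\Lambda_0)}(A'\times_A A')\to Def_{(X,\Lambda_0)}(A')\times_{Def_{(X,\Lambda_0)}(A)}Def_{(X,\Lambda_0)}(A')$ when $A'\to A$ is a small extension. The ambiguity between two liftings of a single element of the fiber product is controlled by the group of Poisson automorphisms of a deformation over $A'$ restricting to the identity on $(X,\Lambda_0)$: two preimages of $(\xi',\xi')$ differ by a choice of such automorphism used in the gluing along $A$. Under the hypothesis $\mathbb{H}^0(X,\Lambda_0,T_X^\bullet)=0$, Proposition \ref{3lm} forces every such automorphism to be the identity, so the two liftings must be isomorphic. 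This is the only step where the assumption $\mathbb{H}^0=0$ enters, and it is precisely what the rigidity result of Proposition \ref{3lm} is designed to supply.
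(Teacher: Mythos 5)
Your proposal is correct and follows essentially the same route as the paper: verify Schlessinger's conditions, with $(H_3)$ coming from projectivity and coherence, and $(H_4)$ ultimately resting on the vanishing of $\mathbb{H}^0(X,\Lambda_0,T_X^\bullet)$ via Proposition \ref{3lm}. The only cosmetic difference is that the paper packages the $(H_4)$ step through Proposition \ref{prorepre1} (the free and transitive action of $\mathbb{H}^1(X,\Lambda_0,T_X^\bullet)$ on fibers of $Def_{(X,\Lambda_0)}(\tilde{A})\to Def_{(X,\Lambda_0)}(A)$, whose freeness is itself deduced from Proposition \ref{3lm}), whereas you argue directly that the gluing ambiguity is killed because all relative Poisson automorphisms are trivial.
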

\begin{proof}
We can check Schlessinger's criterion $(H_0),(H_1),(H_2)$, and since $X$ is projective, $\mathbb{H}^1(X,\Lambda_0,T_X^\bullet)$ is finite-dimensional so that $(H_3)$ is satisfied. Since $\mathbb{H}^0(X,\Lambda_0,T_X^\bullet)=0$, $(H_4)$ follows from Proposition \ref{prorepre1}.
\end{proof}

\begin{example}
For any Poisson $K3$ surface $(X,\Lambda_0)$, $\mathbb{H}^0(X,\Lambda_0,T_X^\bullet)=0$ so that $Def_{(X,\Lambda_0)}$ is pro-representable.
\end{example}

\section{Poisson invertible sheaves}\label{section3}

\begin{definition}[\cite{Pol97}]
Let $(X,\Lambda_0)$ be an algebraic Poisson scheme over $k$. A Poisson connection on a $\mathcal{O}_X$-module $\mathcal{F}$ is a $k$-linear bracket $\{-,-\}:\mathcal{O}_X\otimes_k \mathcal{F}\to \mathcal{F}$ which is a derivation in the first argument and satisfies the Leibnitz identity
\begin{align*}
&\{fg,s\}_{\mathcal{F}}=f\{g,s\}+g\{f,s\}_\mathcal{F}\\
&\{f,gs\}_\mathcal{F}=\{f,g\}s+g\{f,s\}_\mathcal{F}
\end{align*}
where $f,g\in \mathcal{O}_X$, $s$ is a local section of $\mathcal{F}$. Equivalently, a Poisson connection is given by a homomorphism $v:\mathcal{F}\to \mathscr{H}om_{\mathcal{O}_X}(\Omega_{X/k}^1,\mathcal{F})=Der(\mathcal{O}_X,\mathcal{F})$ which satisfies the identity
\begin{align*}
v(gs)=-i_{\Lambda_0}(dg)\otimes s+g\cdot v(s)=-[\Lambda_0,g]\otimes s +g\cdot v(s)
\end{align*}
where $g\in \mathcal{O}_X$. Namely, $v(s)\in Der(\mathcal{O}_X,\mathcal{F})$ is defined by the formula
\begin{align*}
v(s)f=\{f,s\}_\mathcal{F}
\end{align*}
\end{definition}

\begin{definition}[\cite{Pol97}]\label{pc}
A Poisson connection is flat if the the bracket above gives a Lie action of $\mathcal{O}_X$ on $\mathcal{F}$, where $\mathcal{O}_X$ is considered as a Lie algebra via the Poisson bracket. In other words, $\{f,\{g,s\}_\mathcal{F}\}_\mathcal{F}=\{\{f,g\},s\}_\mathcal{F}+\{g,\{f,s\}_\mathcal{F}\}_\mathcal{F}$. For given a Poisson connection $v:\mathcal{F}\to Der(\mathcal{O}_X,\mathcal{F})$, one can define a homomorphism $\tilde{v}:Der(\mathcal{O}_X,\mathcal{F})\to Der^2(\mathcal{O}_X,\mathcal{F})=\mathscr{H}om_{\mathcal{O}_X}(\wedge^2 \Omega_{X/k}^1,\mathcal{F})$ by the formula,
\begin{align*}
\tilde{v}(\delta)(f,g)=\{f,\delta(g)\}_\mathcal{F}-\{g,\delta(f)\}_\mathcal{F}-\delta(\{f,g\})=v(\delta(g))f-v(\delta(f))g-\delta(\{f,g\})
\end{align*}
Let $c(v):=\tilde{v}\circ v$. Then $c(v)$ is $\mathcal{O}_X$-linear and $v$ is flat if and only if $c(v)=\tilde{v}\circ v=0$.
\end{definition}

\begin{remark}\label{lco}
Let $(X,\Lambda_0)$ be a nonsingular Poisson variety. Given a flat Poisson connection $\nabla:\mathcal{F}\to T_X\otimes \mathcal{F}$, we can extend $v_0:=\nabla$ inductively to define
\begin{align*}
v_k:\wedge^k T_X\otimes\mathcal{F}\to \wedge^{k+1}T_X \otimes \mathcal{F}
\end{align*}
by the property
\begin{align*}
v_k(\alpha \otimes s)=-[\alpha,\Lambda_0]\otimes s+(-1)^k\alpha\wedge v_{0}(s)
\end{align*}
where $\alpha$ is a local $k$-vector and $s$ is a local section of $\mathcal{F}$. Then $v_1=\tilde{v}_0$ so that $v_1\circ v_0=0$ and so $v_{k+1}\circ v_k=0$. Hence we have a complex of sheaves
\begin{align}
\mathcal{F}^\bullet:\mathcal{F}\xrightarrow{\nabla:=v_0} T_X\otimes \mathcal{F}\xrightarrow{v_1} \wedge^2 T_X\otimes \mathcal{F}\xrightarrow{v_2} \wedge^3 T_X\otimes \mathcal{F}\xrightarrow{v_3} \cdots
\end{align}
We denote the $i$-th hypercohomology group of this complex of sheaves by $\mathbb{H}^i(X,\Lambda_0,\mathcal{F}^\bullet,\nabla)$.
\end{remark}

\begin{definition}\label{lco2}
Let $(X,\Lambda_0)$ be an algebraic Poisson scheme. An $\mathcal{O}_X$-module $\mathcal{F}$ equipped with a flat Poisson connection is called a Poisson $\mathcal{O}_X$-module. Given two Poisson $\mathcal{O}_X$-modules $(\mathcal{F},\{-,-\}_\mathcal{F})$ and $(G,\{-,-\}_\mathcal{G})$, a morphism $\alpha:\mathcal{F}\to \mathcal{G}$ of Poisson $\mathcal{O}_X$-modules is a morphism of $\mathcal{O}_X$-modules such that $\alpha(\{f,v\}_\mathcal{F})=\{f,\alpha(v)\}_\mathcal{G}$.
\end{definition}

\begin{definition}
Let $(X,\Lambda_0)$ be an algebraic Poisson scheme.  A Poisson invertible sheaf $L$ on $(X,\Lambda_0)$ is an invertible Poisson $\mathcal{O}_X$-module. In other words, $L$ is equipped with a flat Poisson connection $\nabla$. In this case, we denote the Poisson invertible sheaf by $(L,\nabla)$.
\end{definition}

\begin{remark}

 Let $(X,\Lambda_0)$ be an algebraic Poisson scheme with the Poisson bracket $\{-,-\}$. A flat Poisson connection on $\mathcal{O}_X$ is same to giving an element  $T\in \Gamma(X,\mathscr{H}om_{\mathcal{O}_X}(\Omega_{X/k}^1,\mathcal{O}_X))$ with $[\Lambda_0,T]=0$ which defines a Poisson derivation. Let $\{-,-\}_L:\mathcal{O}_X\otimes_k \mathcal{O}_X\to \mathcal{O}_X$ be a flat Poisson connection defined by $v:\mathcal{O}_X\to \mathscr{H}om_{\mathcal{O}_X}(\Omega_{X/k}^1,\mathcal{O}_X)=Der(\mathcal{O}_X,\mathcal{O}_X)$. Let $v(1)=T\in \Gamma(X,\mathscr{H}om(\Omega_{X/k}^1,\mathcal{O}_X))$ . Then $v(g)=v(g1)=-i_{\Lambda_0}(dg)\otimes 1+gT$. Hence $\{f,g\}_L=\{f,g\}+gT(f)$. Since $v$ is flat, $\tilde{v}(v(1))(f,g)=0$ so that $\{f,T(g)\}+\{T(f),g\}-T(\{f,g\})=0$. Hence $T$ is a Poisson derivation so that $[\Lambda_0,T]=0$. In particular, when $v(1)=0$, the Poisson $\mathcal{O}_X$-module structure on $\mathcal{O}_X$ is exactly the Poisson structure on $\mathcal{O}_X$.

\end{remark}

Let $(X,\Lambda_0)$ be an algebraic Poisson scheme with the Poisson bracket $\{-,-\}$. Let $\{f_{ij}\}$ be transition functions defining a Poisson invertible sheaf $(L,\nabla)$ for an open covering of $\mathcal{U}=\{U_i\}$ of $X$. The flat Poisson connection $\nabla$ on $L$, $\{-,-\}_L:\mathcal{O}_X\otimes_k L\to L$ is locally expressed as a Poisson connection on $U_i$ which is equivalent to giving an element $T_i \in \Gamma(U_i,\mathscr{H}om_{\mathcal{O}_X}(\Omega_{X/k}^1,\mathcal{O}_X))$ with $[\Lambda_0, T_i]=0$. Given a non-vanishing section $s\in \Gamma(U_{ij},L)$ which is locally expressed as $s_i$ on $U_i$ with $s_i=f_{ij}s_j$ and for any $a\in \mathcal{O}_X$, $\{a, s\}_L$ is locally expressed as $\{a,s_i\}+s_iT_ia $ on $U_i$ with $f_{ij}(\{a,s_j\}+s_jT_ja)=\{a,s_i\}+s_iT_ia$. Then
\begin{align*}
f_{ij}(\{a,s_j\}+s_jT_ja)&=\{a,s_i\}+s_iT_ia=\{a,f_{ij}s_j\}+f_{ij}s_jT_ia=\{a,f_{ij}\}s_j+f_{ij}\{a,s_j\}+f_{ij}s_j T_ia\\
&\iff f_{ij}T_ja=-\{f_{ij},a\}+f_{ij}T_ia=-[\Lambda_0,f_{ij}]a+f_{ij}T_ia\\
&\iff T_j-T_i+\frac{1}{f_{ij}}[\Lambda_0,f_{ij}]=T_j-T_i+[\Lambda_0,\log f_{ij}]=0
\end{align*}
This show that a Poisson invertible sheaf $(L,\nabla)$ gives a $1$-cocycle 
\begin{align*}
(\{T_i\},\{f_{ij}\})\in \mathcal{C}^0(\mathcal{U},\mathscr{H}om_{\mathcal{O}_X}(\Omega_{X/k}^1,\mathcal{O}_X))\oplus \mathcal{C}^1(\mathcal{U},\mathcal{O}_X^*)  
\end{align*}
in the following \v{C}ech resolution 
\begin{center}
$\begin{CD}
\mathcal{C}^0(\mathcal{U}, \mathscr{H}om_{\mathcal{O}_X}(\wedge^2 \Omega_{X/k}^1,\mathcal{O}_X))\\
@A[\Lambda_0,-]AA\\
\mathcal{C}^0(\mathcal{U},\mathscr{H}om_{\mathcal{O}_X}(\Omega_{X/k}^1,\mathcal{O}_X))@>\delta>> \mathcal{C}^1(\mathcal{U},\mathscr{H}om_{\mathcal{O}_X}(\Omega_{X/k}^1,\mathcal{O}_X))\\
@A[\Lambda_0,\log-]AA @A[\Lambda_0,\log-]AA\\
\mathcal{C}^0(\mathcal{U},\mathcal{O}_X^*)@>-\delta>> \mathcal{C}^1(\mathcal{U},\mathcal{O}_X^*)@>\delta>> \mathcal{C}^2(\mathcal{U},\mathcal{O}_X^*)
\end{CD}$
\end{center}
of the complex of sheaves $\mathcal{O}_X^{*\bullet}:\mathcal{O}_X^*\xrightarrow{[\Lambda_0,\log-]}\mathscr{H}om_{\mathcal{O}_X}(\Omega_{X/k}^1,\mathcal{O}_X)\xrightarrow{[\Lambda_0,-]} \mathscr{H}om_{\mathcal{O}_X}(\wedge^2\Omega_{X/k}^1,\mathcal{O}_X)\to\cdots$. In the sequel we will denote its $i$-th hypercohomology group by $\mathbb{H}^i(X,\Lambda_0,\mathcal{O}_X^{*\bullet})$.

Conversely, a $1$-cocycle $(\{T_i\},\{f_{ij}\})$ define a Poisson invertible sheaf $(L,\nabla)$. For given two $(\{T_i\},\{f_{ij}\})$ and $(\{T_i'\},\{f_{ij}'\})$ defining the same cohomology class so that $\frac{b_i}{b_j}=\frac{f_{ij}}{f_{ij}'}$ and $[\Lambda_0,\log b_i]=T_i-T_i'$ for some $\{b_i\}\in \mathcal{C}^0(\mathcal{U},\mathcal{O}_X^*)$, let $(L,\nabla)$ be the Poisson invertible sheaf defined by $(\{T_i\},\{f_{ij}\})$ and $(L',\nabla')$ the Poisson invertible sheaf defined by $(\{T_i'\},\{f_{ij}'\})$. Then we can define an isomorphism $(L',\nabla')\to (L,\nabla)$ as Poisson $\mathcal{O}_X$-modules in the following way. On each $U_i$, we define $\mathcal{O}_X(U_i)\xrightarrow{\cdot b_i} \mathcal{O}_X(U_i)$ which define an isomorphism $L'\to L$ as $\mathcal{O}_X$-modules by the condition $\frac{b_i}{b_j}=\frac{f_{ij}}{f_{ij}'}$. On the other hand, we note that $b_iT_i'=-[\Lambda_0,b_i]+b_iT_i$. Then $b_i\{a,s_i\}_{L'}=b_i(\{a_i,s_i\}+s_iT_i'a_i)=b_i\{a,s_i\}+s_i(-[\Lambda_0,b_i]+b_iT_i)(a)=b_i\{a,s_i\}+s_i\{a,b_i\}+s_ib_iT_ia=\{a,b_is_i\}+b_is_iT_ia=\{a,b_is_i\}_L$ so that $\cdot b_i$ is a Poisson $\mathcal{O}_X$-module homomorphism.

This show that we can identify $\mathbb{H}^1(X,\Lambda_0,\mathcal{O}_X^{*\bullet})$ with isomorphism classes of Poisson invertible sheaves on $(X,\Lambda_0)$. Moreover $\mathbb{H}^1(X,\Lambda_0,\mathcal{O}_X^{*\bullet})$ forms a group. We simply note that given two Poisson invertible sheaves $(L,\nabla_L)$ and $(L',\nabla_{L'})$ which are represented by $(\{T_i\},\{f_{ij}\})$ and $(\{T_i'\},\{f_{ij}'\})$ respectively, we can define  Poisson invertible sheaves $(L\otimes L', \nabla_{L\otimes L'})$ by $(\{T_i+T_i'\},\{f_{ij}f_{ij}'\})$ and $(L^{-1},\nabla_{L^{-1}})$ by $(\{-T_i\},\{f_{ij}^{-1}\})$. The Poisson structure on $\mathcal{O}_X$ define the $0$ element.

\begin{definition}\label{pic}
We call $\mathbb{H}^1(X,\Lambda_0,\mathcal{O}_X^{*\bullet})$ the Poisson Picard group of an algebraic Poisson scheme $(X,\Lambda_0)$ and denote it by $Pic_k(X,\Lambda_0)$.
\end{definition}

\begin{remark}
Let $(X,\Lambda_0)$ be a Poisson scheme over $S$ with $\Lambda_0\in \Gamma(X,\mathscr{H}om_{\mathcal{O}_X}(\wedge^2 \Omega_{X/S}^1,\mathcal{O}_X))$. 
Let $(L,\nabla)$ be a Poisson invertible sheaf on $(X,\Lambda)$. $(L,\nabla)$ is called a Poisson invertible sheaf over $S$ if the associated connection $\{-,-\}_L:\mathcal{O}_X\otimes_k L\to L$ is $\mathcal{O}_S$-linear. In other words, $\{-,-\}_L:\mathcal{O}_X\otimes_{\mathcal{O}_S}L\to L$.
Then we can identify the first cohomology group of the following complex of sheaves
\begin{align*}
\mathcal{O}_X^*\xrightarrow{[\Lambda_0,\log-]}\mathscr{H}om_{\mathcal{O}_X}(\Omega_{X/S}^1,\mathcal{O}_X)\xrightarrow{[\Lambda_0,-]} \mathscr{H}om_{\mathcal{O}_X}(\wedge^2\Omega_{X/S}^1,\mathcal{O}_X)\to\cdots
\end{align*}
with isomorphism classes of Poisson invertible sheaves over $S$ on $(X,\Lambda_0)$ and we will denote the group by $Pic_S(X,\Lambda_0)$.
\end{remark}

\begin{definition}\label{pchern}
Let $(X,\Lambda_0)$ be a nonsingular Poisson variety over $\mathbb{C}$ and $d:\mathcal{O}_X\to \Omega_X^1$ be the canonical derivation. We can define a homomorphism of complex of sheaves 
\begin{center}
$\begin{CD}
\mathcal{O}_X^{*\bullet}:@.\mathcal{O}_X^*@>[\Lambda_0,\log-]>> T_X @>[\Lambda_0,-]>> \wedge^2 T_X@>>> \cdots\\
@.@Vd\log - VV @VidVV@VVV\\
@.\Omega_X^1@>i_{\Lambda_0}>> T_X@>>> 0@>>>\cdots
\end{CD}$
\end{center}
So we have an induced group homomorphism:
\begin{align*}
c:\mathbb{H}^1(X,\Lambda_0,\mathcal{O}_X^{*\bullet})\to \mathbb{H}^1(\Omega_X^1\xrightarrow{i_{\Lambda_0}} T_X)
\end{align*}
Let $(L,\nabla)$ be a Poisson invertible sheaf on $(X,\Lambda_0)$. Let $[L,\nabla]$ be the associated element of $\mathbb{H}^1(X,\Lambda_0,\mathcal{O}_X^*)$. We call $c(L,\nabla):=c([L,\nabla]) \in \mathbb{H}^1(\Omega_X^1\xrightarrow{i_{\Lambda_0}} T_X)$ the Poisson Chern class of $(L,\nabla)$. 
\end{definition}

\begin{remark}\label{remark12}
Let $X$ be a compact K\"ahler manifold with a holomorphic Poisson structure $\Lambda_0$. Let $(L,\nabla)$ be a Poisson invertible sheaf defined by the $1$-cocycle $\{f_{ij}\}$ and Poisson vector fields $\{T_i\}$ for an open covering $\mathcal{U}$ of $X$. In this remark, we describe $c(\{f_{ij}\},\{T_j\})=(\{d\log f_{ij}\},\{ T_j\})$ under the map $c:\mathbb{H}^1(X,\Lambda_0,\mathcal{O}_X^{*\bullet})\to \mathbb{H}^1(X,\Omega_X^1\xrightarrow{i_{\Lambda_0}} T_X)$ in terms of the Deaulbault resolution. Given the K\"ahler form of $X$, choose a Hermitian form $\langle-,-\rangle$ on the fibers of $L$ so that $\langle\xi,\xi\rangle=a_j|\xi_j|^2$, where $\xi_j$ is a fiber coordinate of $\xi$, and $a_j(z)$ is a real positive $C^{\infty}$ function on $U_j$. Since $a_j|\xi_j|^2=a_k|\xi_k|^2$ and $\xi_j=f_{jk}\xi_k$, we have $|f_{jk}|^2=\frac{a_k}{a_j}$. Then $-\delta(\{-\partial\log a_i\})=\{d\log f_{ij}\}$, where $\{-\partial\log a_i\}\in \mathcal{C}^0(\mathcal{U},\mathscr{A}^{0,0}(\Omega_X^1))$\footnote{$\mathscr{A}^{0,0}(\Omega_X^1)$ is the sheaf of germs of $C^{\infty}$-sections of $\Omega_X^1$.}. We note that $i_{\Lambda_0}(-\partial \log a_i)=-[\Lambda_0,\log a_i]$. Hence, in the following two resolutions of $\Omega_X^1\xrightarrow{i_{\Lambda_0}} T_X\to0\to0\to \cdots$,

\begin{center}
$\begin{CD}
0\\
@AAA\\
\mathcal{C}^0(\mathcal{U},T_X)@>\delta>> \mathcal{C}^1(\mathcal{U},T_X)\\
@Ai_{\Lambda_0}AA @Ai_{\Lambda_0} AA\\
\mathcal{C}^0(\mathcal{U},\Omega^1_X)@>-\delta>> \mathcal{C}^1(\mathcal{U},\Omega^1_X)@>>> \cdots,
\end{CD}$
$\begin{CD}
0\\
@AAA\\
A^{0,0}(X,T_X)@>-\bar{\partial}>> A^{0,1}(X,T_X)\\
@Ai_{\Lambda_0}AA @Ai_{\Lambda_0} AA\\
A^{0,0}(X,\Omega^1_X)@>\bar{\partial}>> A^{0,1}(X,\Omega^1_X)@>>> \cdots
\end{CD}$
\end{center}
$(\{T_i\},\{d\log f_{ij}\})\in \mathcal{C}^0(\mathcal{U},T_X)\oplus \mathcal{C}^1(\mathcal{U},\Omega_X^1)$ in the \v{C}ech resolution corresponds to 
\begin{align*}
(\{-\bar{\partial}{\partial}\log a_i\},\{-[\Lambda_0,\log a_i]-T_i]\})\in A^{0,1}(X,\Omega_X^1)\oplus A^{0,0}(X,T_X) 
\end{align*}
\end{remark}

\section{Deformations of a Poisson invertible sheaf $(L,\nabla)$ under trivial Poisson deformations}\label{section4}

\begin{definition}\label{unt}
Let $(X,\Lambda_0)$ be a nonsingular Poisson variety. Let $A$ be in $\bold{Art}$. An infinitesimal deformation of a Poisson invertible sheaf $(L,\nabla)$ over $A$ consists of the trivial Poisson deformation $(X\times_{Spec(k)} Spec(A),\Lambda_0)$ and a Poisson invertible sheaf $(\mathcal{L},\nabla_{\mathcal{L}})$ over $A$ on $(X\times_{Spec(k)} Spec(A),\Lambda_0)$ such that $(L,\nabla)=(\mathcal{L},\nabla_{\mathcal{L}})|_{(X,\Lambda_0)}$
 Two deformations $(\mathcal{L},\nabla_\mathcal{L})$ and $(\mathcal{L}', \nabla_\mathcal{L'})$ of $(L,\nabla)$ over $A$ is called isomorphic  if there is an isomorphism $(\mathcal{L},\nabla_{\mathcal{L}})\cong (\mathcal{L}',\nabla_{\mathcal{L}'})$ as Poisson $\mathcal{O}_{X\times_{Spec(k)} Spec(A)}$-modules. Then we can define a functor of Artin rings
\begin{align*}
 Def_{(L,\nabla_L)}:\bold{Art}&\to (sets)\\
                              A&\mapsto \{\text{infinitesimal deformations of $(L,\nabla)$ over $A$}\}/\text{isomorphism}
\end{align*} 
\end{definition}

\begin{remark}
Given a nonsingular Poisson variety $(X,\Lambda_0)$, isomorphism classes of Poisson invertible sheaves over $A$ on $(X\times Spec(A),\Lambda_0)$ can be identified with the first hypercohomlogy group of the following complex of sheaves
\begin{align*}
\mathcal{O}_{X\times Spec(A)}^*\xrightarrow{[\Lambda_0,\log-]} T_X\otimes_k A\xrightarrow{[\Lambda_0,-]}\wedge^2 T_X\otimes_k A\xrightarrow{[\Lambda_0,-]} \cdots
\end{align*}

\end{remark}

\begin{proposition}\label{tpl}
Let $(X,\Lambda_0)$ be a nonsingular Poisson variety. Then
\begin{enumerate}
\item There is a $1-1$ correspondence
\begin{align*}
\kappa: Def_{(L,\nabla)}(k[\epsilon])=\{\text{isomorphism classes of first-order deformations of $(L,\nabla)$}\}\cong \mathbb{H}^1(X,\Lambda_0,\mathcal{O}_X^\bullet)
\end{align*}
\item Let $A\in \bold{Art}$ and $\eta=(\mathcal{L},\nabla_\mathcal{L})$ be an infinitesimal Poisson deformation of $(L,\nabla)$ over $A$. Then, to every small extension $e:0\to (t)\to\tilde{A}\to A\to 0$, we can associate an element $o_\eta(e)\in\mathbb{H}^2(X,\Lambda_0,\mathcal{O}_X^\bullet)$ called the obstruction lifting of $\eta$ to $\tilde{A}$, which is $0$ if and only if a lifting of $\eta$ exist.
\end{enumerate}
\end{proposition}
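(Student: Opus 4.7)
The plan is to proceed in close analogy with Propositions \ref{3q} and \ref{lifting}, replacing the truncated complex $T_X^\bullet$ by the full Lichnerowicz--Poisson complex $\mathcal{O}_X^\bullet$. The key point is that a Poisson invertible sheaf is encoded, as explained just before Definition \ref{pic}, by a \v{C}ech $1$-cocycle $(\{T_i\},\{f_{ij}\})\in \mathcal{C}^0(\mathcal{U},T_X)\oplus\mathcal{C}^1(\mathcal{U},\mathcal{O}_X^*)$ satisfying $[\Lambda_0,T_i]=0$, $T_j-T_i+[\Lambda_0,\log f_{ij}]=0$, and $f_{ij}f_{jk}f_{ik}^{-1}=1$; a deformation over $A$ is the same data with coefficients in $A$, reducing modulo the maximal ideal to the original.

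For part (1), given a first-order deformation $(\mathcal{L},\nabla_{\mathcal{L}})$ over $k[\epsilon]$, choose an affine open covering $\mathcal{U}=\{U_i\}$ trivializing $\mathcal{L}$, so that $\mathcal{L}$ is represented by $(\{T_i+\epsilon S_i\},\{f_{ij}(1+\epsilon h_{ij})\})$ with $S_i\in\Gamma(U_i,T_X)$ and $h_{ij}\in\Gamma(U_{ij},\mathcal{O}_X)$. Expanding the three Poisson-invertible-sheaf conditions to first order in $\epsilon$ yields $[\Lambda_0,S_i]=0$, $\delta\{h_{ij}\}=0$, and $S_j-S_i+[\Lambda_0,h_{ij}]=0$, which is precisely the condition that $(\{h_{ij}\},\{-S_i\})$ is a $1$-cocycle in the \v{C}ech double complex computing $\mathbb{H}^1(X,\Lambda_0,\mathcal{O}_X^\bullet)$. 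Define $\kappa(\eta)$ to be its class. To show well-definedness and bijectivity, use the description of isomorphisms of Poisson invertible sheaves given in Section \ref{section3}: an isomorphism between two such cocycle data corresponds to $\{b_i\}$ with $b_i/b_j=f_{ij}'/f_{ij}$ and $b_iT_i'=-[\Lambda_0,b_i]+b_iT_i$. Applying this with $b_i=1+\epsilon c_i$ shows that equivalent deformations produce cohomologous cocycles, with coboundary $(-\delta\{c_i\},\{[\Lambda_0,c_i]\})$; conversely, any cocycle glues up by reversing this construction. The trivial deformation corresponds to the zero class.

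For part (2), fix a small extension $e:0\to (t)\to \tilde{A}\to A\to 0$ and an infinitesimal deformation $\eta=(\mathcal{L},\nabla_\mathcal{L})$ over $A$ represented by $(\{T_i\},\{f_{ij}\})$ with coefficients in $A$. Choose, for each $i$, an arbitrary lifting $\tilde{T}_i\in \Gamma(U_i,T_X)\otimes_k\tilde{A}$ of $T_i$, and an arbitrary lifting $\tilde{f}_{ij}\in \Gamma(U_{ij},\mathcal{O}_X^*)\otimes_k \tilde{A}$ of $f_{ij}$. Measure the failure of these liftings to define a Poisson invertible sheaf over $\tilde{A}$ by writing
\begin{align*}
[\Lambda_0,\tilde{T}_i]=t\alpha_i,\qquad \tilde{T}_j-\tilde{T}_i+\tfrac{1}{\tilde{f}_{ij}}[\Lambda_0,\tilde{f}_{ij}]=t\beta_{ij},\qquad \tilde{f}_{ij}\tilde{f}_{jk}\tilde{f}_{ik}^{-1}=1+t\gamma_{ijk},
\end{align*}
with $\alpha_i\in\Gamma(U_i,\wedge^2 T_X)$, $\beta_{ij}\in\Gamma(U_{ij},T_X)$, $\gamma_{ijk}\in\Gamma(U_{ijk},\mathcal{O}_X)$. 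The identity $[\Lambda_0,[\Lambda_0,\tilde{T}_i]]=0$ forces $[\Lambda_0,\alpha_i]=0$, and taking $\delta$ of the other two relations together with applying $[\Lambda_0,-]$ yields the remaining compatibilities, so that the triple $(\{\alpha_i\},\{\beta_{ij}\},\{-\gamma_{ijk}\})\in \mathcal{C}^0(\mathcal{U},\wedge^2 T_X)\oplus\mathcal{C}^1(\mathcal{U},T_X)\oplus\mathcal{C}^2(\mathcal{U},\mathcal{O}_X)$ is a \v{C}ech $2$-cocycle for $\mathcal{O}_X^\bullet$. Set $o_\eta(e)$ to be its hypercohomology class in $\mathbb{H}^2(X,\Lambda_0,\mathcal{O}_X^\bullet)$.

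Finally, verify that $o_\eta(e)$ is independent of the choices $\{\tilde T_i\},\{\tilde f_{ij}\}$: a different choice differs by $(\{t S_i\},\{1+th_{ij}\})$, and a direct computation shows that the resulting $2$-cocycle differs from the original by the coboundary of $(\{h_{ij}\},\{-S_i\})\in \mathcal{C}^1(\mathcal{U},\mathcal{O}_X)\oplus\mathcal{C}^0(\mathcal{U},T_X)$. Then $o_\eta(e)=0$ precisely when we can adjust the $\tilde T_i$ and $\tilde f_{ij}$ so that $\alpha_i,\beta_{ij},\gamma_{ijk}$ all vanish identically, i.e.\ precisely when a lifting $\tilde{\eta}$ exists. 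The main obstacle will be the bookkeeping in the last step: carefully expanding the change-of-lifting formulas and matching them with the Čech differentials in the double complex for $\mathcal{O}_X^\bullet$, exactly paralleling the verification carried out in the proof of Proposition \ref{lifting} but with the extra column $\mathcal{O}_X\xrightarrow{[\Lambda_0,-]} T_X$ adjoined on the left.
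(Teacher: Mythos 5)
Your proposal is correct and follows essentially the same route as the paper's proof: represent the Poisson invertible sheaf and its deformations by \v{C}ech data $(\{T_i\},\{f_{ij}\})$, expand the three defining conditions to first order to get a $1$-cocycle in the double complex for $\mathcal{O}_X^\bullet$ (your $h_{ij}$ is the paper's $g_{ij}/f_{ij}$), and for obstructions choose arbitrary lifts, measure the failure by the triple $(\{\alpha_i\},\{\beta_{ij}\},\{\gamma_{ijk}\})$, and check independence of choices. The only differences are sign and normalization conventions in the \v{C}ech resolution, which are immaterial.
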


\begin{proof}
Let $\mathcal{U}=\{U_i\}$ be an affine open covering such that $(L,\nabla)$ is given by a system of transition functions $\{f_{ij}\}\in \mathcal{C}^1(\mathcal{U},\mathcal{O}_X^*)$ and $\{T_i\}\in \mathcal{C}^0(\mathcal{U},T_X)$ so that $f_{ij}f_{jk}=f_{ik},[\Lambda_0,T_i]=0$ and $T_j-T_i+[\Lambda_0,\log f_{ij}]=0$. Then a first order deformation $(\mathcal{L},\nabla_{\mathcal{L}})$ of $(L,\nabla)$ over $Spec(k[\epsilon])$ is given by $\{F_{ij}=f_{ij}+\epsilon g_{ij}\}\in \mathcal{C}^0(\mathcal{U},\mathcal{O}_{X\times spec(k[\epsilon])}^*)$ and $\{Y_i=T_i+\epsilon W_i\}\in \mathcal{C}^1(\mathcal{U},T_X\otimes k[\epsilon])$. The cocycle condition $F_{ij}F_{jk}=F_{ik}$ gives $(f_{ij}+\epsilon g_{ij})(f_{jk}+\epsilon g_{jk})=(f_{ik}+\epsilon g_{ik})$ so that $g_{ij}f_{jk}+f_{ij}g_{jk}=g_{ik}$, equivalently, $\frac{g_{ij}}{f_{ij}}+\frac{g_{jk}}{f_{jk}}=\frac{g_{ik}}{f_{ik}}$. $[\Lambda_0,T_i+\epsilon W_i]$ gives $[\Lambda_0,W_i]=0$.  $Y_j-Y_i+[\Lambda_0,\log F_{ij}]=0$ gives $T_j+\epsilon W_j-T_i-\epsilon W_i+\frac{f_{ij}-\epsilon g_{ij}}{f_{ij}^2}[\Lambda_0,f_{ij}+\epsilon g_{ij}]=0$ so that $W_j-W_i+[\Lambda_0,\frac{g_{ij}}{f_{ij}}]=0$. Then $(\{\frac{g_{ij}}{f_{ij}}\},\{W_i\})\in \mathcal{C}^0(\mathcal{U},\mathcal{O}_X)\oplus \mathcal{C}^0(\mathcal{U},T_X)$ define an element of $\mathbb{H}^1(X,\Lambda_0,\mathcal{O}_X^\bullet)$ in the following \v{C}ech resolution
\begin{center}
$\begin{CD}
\mathcal{C}^0(\mathcal{U},\wedge^3 T_X)\\
@A[\Lambda_0,-]AA\\ 
\mathcal{C}^0(\mathcal{U},\wedge^2 T_X)@>-\delta>>\mathcal{C}^1(\mathcal{U},\wedge^2 T_X)\\
@A[\Lambda_0,-]AA @A[\Lambda_0,-]AA\\
\mathcal{C}^0(\mathcal{U},T_X)@>\delta>>\mathcal{C}^1(\mathcal{U},T_X)@>-\delta>>\mathcal{C}^2(\mathcal{U},T_X)\\
@A[\Lambda_0,-]AA @A[\Lambda_0,-]AA @A[\Lambda_0,-]AA\\
\mathcal{C}^0(\mathcal{U},\mathcal{O}_X)@>-\delta>>\mathcal{C}^1(\mathcal{U},\mathcal{O}_X)@>\delta>>\mathcal{C}^2(\mathcal{U},\mathcal{O}_X)@>-\delta>>\mathcal{C}^3(\mathcal{U},\mathcal{O}_X)
\end{CD}$
\end{center}
Given two equivalent first-order deformations $(\mathcal{L},\nabla_{\mathcal{L}})$ and $(\mathcal{L}',\nabla_{\mathcal{L}'})$ of $(L,\nabla)$ which are represented by $(\{T_i+\epsilon W_i\},\{f_{ij}+\epsilon g_{ij}\})$ and $(\{T_i+\epsilon W_i'\},\{ f_{ij}+\epsilon g_{ij}'\})$ respectively so that we have a $\{a_i\}\in \mathcal{C}^0(\mathcal{U},\mathcal{O}_X)$ such that $(1+\epsilon a_i)(f_{ij}+\epsilon g_{ij})=(f_{ij}+\epsilon g_{ij}')(1+\epsilon a_j)$. Then $a_if_{ij}+g_{ij}=f_{ij}a_j+g_{ij}'$, equivalently, $a_i-a_j=\frac{g_{ij}'}{f_{ij}}-\frac{g_{ij}}{f_{ij}}$. On the other hand, since multiplication by $1+\epsilon a_i$ is a Poisson $\mathcal{O}_X$-module homomorphism, we have $T_i+\epsilon W_i-T_i-\epsilon W_i'+[\Lambda_0, \log (1+\epsilon a_i)]$ so that $W_i-W_i'+[\Lambda_0,a_i]=0$. Hence $(\{\frac{g_{ij}}{f_{ij}}\},\{W_i\})$ and $(\{\frac{g_{ij}'}{f_{ij}}\},\{W_i'\})$ are cohomologous so that we get $(1)$.

Second we identify obstructions.  
Let us consider a small extension $e:0\to (t)\to \tilde{A}\to A\to 0$ in $\bold{Art}$ and let $\eta=(\mathcal{L},\nabla_{\mathcal{L}})$ be an infinitesimal deformation of $(L,\nabla)$ over $Spec(A)$. Let $\eta=(\mathcal{L},\nabla_{\mathcal{L}})$ be represented by $\{F_{\ij}\}$, where $F_{ij}\in \Gamma(U_{ij},\mathcal{O}_{X\times spec(A)}^*)$ and $\{Y_i\}$, where $Y_i\in \Gamma(U_i,T_X)\otimes_k A$. In order to see if a lifting $\tilde{\eta}$ of $\eta$ to $Spec(\tilde{A})$ exists, we choose an arbitrary collection $\{\tilde{F}_{ij}\}, \{\tilde{Y}_i\}$, where $\tilde{F}_{ij}$ is a nowhere zero function on $U_{ij}\times_{Spec(k)} Spec(\tilde{A})$ which restricts to $F_{ij}$ on $U_{ij}\times Spec(A)$ and $\tilde{Y}_i\in\Gamma(U_i,T_X)\otimes \tilde{A}$ restricts to $Y_i\in \Gamma(U_i,T_X)\otimes A$. Then $\tilde{F}_{ij}\tilde{F}_{jk}\tilde{F}_{ik}^{-1}=1+tg_{ijk}$ for some $g_{ijk}\in \Gamma(U_{ijk}, \mathcal{O}_X)$, $[\Lambda_0, \tilde{Y}_i]=tS_i$ for some $S_i\in \Gamma(U_i,\wedge^2 T_X)$, and $\tilde{Y}_j-\tilde{Y}_i+[\Lambda_0,\log \tilde{F}_{ij}]=tQ_{ij}$ for some $Q_{ij}\in\Gamma(U_{ij},T_X)$. Now we claim that $\alpha:=(\{S_i\},\{Q_{ij}\}, \{g_{ijk}\})\in \mathcal{C}^0(\mathcal{U},\wedge^2 T_X)\oplus \mathcal{C}^1(\mathcal{U}, T_X)\oplus \mathcal{C}^2(\mathcal{U},\mathcal{O}_X)$ is a \v{C}ech $2$-cocycle in the above \v{C}ech resolution. Since $0=[\Lambda_0,[\Lambda_0,\tilde{Y}_i]]=t[\Lambda_0, S_i]$, we have $[\Lambda_0, S_i]=0$. $tS_i-tS_j=[\Lambda_0,\tilde{Y}_i-\tilde{Y}_j]=[\Lambda_0,[\Lambda_0,\log \tilde{F}_{ij}]-tQ_{ij}]=-t[\Lambda_0, Q_{ij}]$ so that we have $S_i-S_j+[\Lambda_0, Q_{ij}]=0$. $0=\tilde{Y}_i-\tilde{Y}_j+\tilde{Y}_j-\tilde{Y}_k+\tilde{Y}_k-\tilde{Y}_i=-t(Q_{ij}+Q_{jk}+Q_{ki})+[\Lambda_0, \log \tilde{F}_{ij}\tilde{F}_{jk}\tilde{F}_{ki}]=-t(Q_{ij}+Q_{jk}+Q_{ki})+[\Lambda_0, \log ( 1+tg_{ijk})]=-t(Q_{ij}+Q_{jk}+Q_{ki})+(1-tg_{ijk})[\Lambda_0, 1+tg_{ijk}]=t(-(Q_{ij}+Q_{jk}+Q_{ki})+[\Lambda_0, g_{ijk}])$. This proves the claim.

Let $\{\tilde{F}_{ij}'\},\{\tilde{Y}_i'\}$ be another such arbitrary collection inducing $\tilde{\eta}'=(\mathcal{L},\nabla_{\mathcal{L}})$. Then we claim that the $2$-cocycle $\beta:=(\{S_i'\},\{Q_{ij}'\},\{g_{ijk}'\})$ associated with $\{F_{ij}'\},\{\tilde{Y}_i'\}$ is cohomologous to the $2$-cocycle $\alpha=(\{S_i\},\{Q_{ij}\},\{g_{ijk}\})$ associated with $\{\tilde{F}_{ij}\},\{\tilde{Y}_i\}$. Note that $\tilde{F}_{ij}'=\tilde{F}_{ij}+tF_{ij}'$ for some $F_{ij}'\in \Gamma(U_{ij},\mathcal{O}_X)$ and $\tilde{Y}_i'=\tilde{Y}_i+tY_i'$ for some $Y_i\in \Gamma(U_i,T_X)$. Then $tS_i'=[\Lambda_0,\tilde{Y}_i']=[\Lambda_0,\tilde{Y}_i+tY_i']=tS_i+t[\Lambda_0,Y_i']$ so that $S_i'-S_i=[\Lambda_0,Y_i']$. $1+tg_{ijk}'=\tilde{F}_{ij}'\tilde{F}_{jk}'\tilde{F'}_{ik}^{-1}=(\tilde{F}_{ij}+tF_{ij}')(\tilde{F}_{jk}+tF_{jk}')\frac{\tilde{F}_{ik}-tF_{ik}'}{\tilde{F}_{ik}^2}=1+tg_{ijk}+tF_{ij}'f_{jk}f_{ki}+f_{ij}F_{jk}'f_{ki}-f_{ij}f_{jk}F_{ik}'f_{ki}^2$ so that we have $g_{ijk}'-g_{ijk}=\frac{F_{ij}'}{f_{ij}}+\frac{F_{jk}'}{f_{jk}}-\frac{F_{ik}'}{f_{ik}}$. $tQ_{ij}'=\tilde{Y}_j'-\tilde{Y}_i'+\frac{1}{\tilde{F}_{ij}'}[\Lambda_0,\tilde{F}_{ij}']=\tilde{Y}_j+tY_j'-\tilde{Y}_i-tY_i'+\frac{\tilde{F}_{ij}-tF_{ij}'}{\tilde{F}_{ij}^2}[\Lambda_0,\tilde{F}_{ij}+tF_{ij}']=tQ_{ij}+t(Y_j'-Y_i')+\frac{1}{f_{ij}}[\Lambda_0,F_{ij}']-\frac{F_{ij}'}{f_{ij}^2}[\Lambda_0,f_{ij}]$ so that we have $Q_{ij}'-Q_{ij}=Y_j'-Y_i'+[\Lambda_0,\frac{F_{ij}'}{f_{ij}}].$ $(\{Y_i'\},\{\frac{F_{ij}'}{f_{ij}}\})\in \mathcal{C}^0(\mathcal{U},T_X)\oplus \mathcal{C}^1(\mathcal{U},\mathcal{O}_X)$ is mapped to $\beta-\alpha$. This proves that $\alpha$ is cohomologous to $\beta$. So given a small extension $e:0\to (t)\to \tilde{A}\to A\to 0$ and an infinitesimal deformation $\eta$ of $(L,\nabla)$ over $A$, we can associate an element $o_{\eta}(e):=$ the cohomology class of $\alpha\in \mathbb{H}^2(X,\Lambda_0,\mathcal{O}_X^\bullet)$. We note that $o_{\eta}(e)=0$ if and only if there exists a collection of $\{\tilde{F}_{ij}\}$ and $\{\tilde{Y}_i\}$ satisfying the cocycle condition defining a Poisson invertible sheaf over $\tilde{A}$ on $(X\times Spec(\tilde{A}),\Lambda_0)$ which induces $\eta$.
\end{proof}

\begin{remark}
In Appendix \ref{appendixB}, more generally, we study deformations of Poisson vector bundles $($see Definition $\ref{ve4}$ and Proposition $\ref{ve2}$$)$.
\end{remark}

\begin{remark}
We can rephrase Poisson deformations of $(X,\Lambda_0,L,\nabla)$ under trivial Poisson deformations in the holomorphic setting in the following way. Let $(X,\Lambda_0)$ be a compact holomorphic Poisson manifold. By a family of deformations of Poisson invertible sheaves in the trivial Poisson deformation of $(X,\Lambda_0)$ over $M$, we mean a pair of the trivial Poisson analytic family $(\mathcal{X}=X\times M,\Lambda_0, M)$ with the projection $p:\mathcal{X}\to M$ $($see \cite{Kim14}$)$ and a Poisson invertible sheaf $(\mathcal{L},\nabla_\mathcal{L})$ of $\mathcal{X}$ over $M$. Let $\{U_i\}$ be a finite open covering of $X$, where $z_i=(z_1,...,z_n)$ is a coordinate of $U_i$, such that $\mathcal{X}$ is covered by a finite number of coordinate systems $U_i\times M$, and $(L,\nabla)$ is represented by transition functions $\{\Psi_{ij}(z_j,t)\}\in \mathcal{C}^1(\mathcal{U},\mathcal{O}_{X\times M}^*)$ with $\Psi_{ij}(z_j,t)\cdot \Psi_{jk}(z_k,t)=\Psi_{ij}(z_i,t)$ and $\{T_i\}\in \mathcal{C}^0(\mathcal{U},T_{X/M})$ where $T_i=\sum_{l=1}^n T_i^l(z_i,t)\frac{\partial}{\partial z_i^l}$ with $[\Lambda_0, T_i]=0$. Since $T_j-T_i+[\Lambda_0,\log \Psi_{ij}]=0$, by taking the derivative with respect to $t$, we get a cohomology class $(\{T_i'=\sum_{l=1}^n \frac{\partial T_i^l}{\partial t}\frac{\partial}{\partial z_i^l}\},\{\frac{1}{\Psi_{ij}}\frac{\partial \Psi_{ij}}{\partial t}\})\in \mathcal{C}^0(\mathcal{U},T_X)\oplus \mathcal{C}^1(\mathcal{U},\mathcal{O}_X)$ so that we get a characteristic map $\rho:T_t(M)\to \mathbb{H}^1(X,\Lambda_0,\mathcal{O}_X^\bullet)$. 
\end{remark}

\begin{remark}
Let $(L,\nabla)$ be Poisson invertible sheaf of on a nonsingular Poisson variety $(X,\Lambda_0)$. Then the group $Aut((L,\nabla))$ of automorphisms on $(L,\nabla)$ as Poisson $\mathcal{O}_X$-modules can be identified with $\mathbb{H}^0(X,\Lambda_0,\mathcal{O}_X^{*\bullet})=\mathbb{H}^0(X,\Lambda_0,\mathcal{O}_X^\bullet)^*$.
\end{remark}

\begin{proposition}\label{prorepre2}
Let $e:0\to (t)\to \tilde{A}\xrightarrow{\mu} A\to 0$ be a small extension in $\bold{Art}$. Let $p:=Def_{(L,\nabla)}(\mu):Def_{(L,\nabla)}(\tilde{A})\to Def_{(L,\nabla)}(\tilde{A})$. Then given $\eta=(\mathcal{L},\nabla_{L})\in Def_{(L,\nabla)}(A)$, there is a transitive action of $\mathbb{H}^1(X,\Lambda_0,\mathcal{O}_X^\bullet)$ on $p^{-1}(\eta)$. Moreover, if $\mathbb{H}^0(X\times Spec(\tilde{A}),\Lambda_0,\mathcal{O}_X^{*\bullet})\to \mathbb{H}^0(X\times Spec(A),\Lambda_0,\mathcal{O}_X^{*\bullet})$ is surjective, the action is free.
\end{proposition}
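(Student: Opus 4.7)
The strategy is to imitate the argument of Proposition \ref{prorepre1}, but now with the \v{C}ech description of Poisson invertible sheaves recalled in the proof of Proposition \ref{tpl}. First I would fix an affine open cover $\mathcal{U}=\{U_i\}$ of $X$ over which any lifting $\tilde{\eta}\in p^{-1}(\eta)$ is represented by data $(\{\tilde{F}_{ij}\},\{\tilde{Y}_i\})$ satisfying $\tilde{F}_{ij}\tilde{F}_{jk}=\tilde{F}_{ik}$, $[\Lambda_0,\tilde{Y}_i]=0$ and $\tilde{Y}_j-\tilde{Y}_i+[\Lambda_0,\log\tilde{F}_{ij}]=0$ over $\tilde{A}$, reducing mod $t$ to a fixed representative of $\eta$. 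Given a class $v\in\mathbb{H}^1(X,\Lambda_0,\mathcal{O}_X^\bullet)$ represented by a $1$-cocycle $(\{h_{ij}\},\{W_i\})\in\mathcal{C}^1(\mathcal{U},\mathcal{O}_X)\oplus\mathcal{C}^0(\mathcal{U},T_X)$, I would define
\[
G(v,\tilde{\eta}):=\bigl(\{\tilde{F}_{ij}(1+th_{ij})\},\{\tilde{Y}_i+tW_i\}\bigr).
\]
Using $t^2=0$ and $\log(1+th_{ij})=th_{ij}$, a direct check shows that the three cocycle conditions for this twisted data translate exactly into $\delta\{h_{ij}\}=0$, $[\Lambda_0,W_i]=0$ and $\delta\{W_i\}+[\Lambda_0,\{h_{ij}\}]=0$, i.e.\ the $1$-cocycle conditions in the \v{C}ech resolution of Proposition \ref{tpl}; hence $G(v,\tilde{\eta})\in p^{-1}(\eta)$.

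Well-definedness of $G$ on isomorphism classes reduces to the observation that replacing $(\{h_{ij}\},\{W_i\})$ by a coboundary coming from $\{b_i\}\in\mathcal{C}^0(\mathcal{U},\mathcal{O}_X)$ yields a lifting Poisson-$\mathcal{O}_{X\times\mathrm{Spec}(\tilde{A})}$-isomorphic to $\tilde{\eta}$ via local multiplication by $1+tb_i$ on $U_i$, which is precisely the gauge-equivalence calculation at the end of the first part of the proof of Proposition \ref{tpl}. Independence of the representative of $\tilde{\eta}$ and of the chosen cover follows by passing to common refinements. For transitivity, given two liftings $\tilde{\eta}_1,\tilde{\eta}_2\in p^{-1}(\eta)$, I would pick representatives agreeing mod $t$, so that $\tilde{F}^{(2)}_{ij}=\tilde{F}^{(1)}_{ij}(1+th_{ij})$ and $\tilde{Y}^{(2)}_i=\tilde{Y}^{(1)}_i+tW_i$ for unique $h_{ij}\in\mathcal{O}_X(U_{ij})$, $W_i\in T_X(U_i)$; comparing the three cocycle conditions on both sides forces $(\{h_{ij}\},\{W_i\})$ to be a $1$-cocycle, and then $G$ of the resulting class $v$ sends $\tilde{\eta}_1$ to $\tilde{\eta}_2$ by construction.

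The step I expect to require the most care is freeness. Suppose $G(v,\tilde{\eta})\cong\tilde{\eta}$ as Poisson-$\mathcal{O}$-modules via some isomorphism $\phi$. The subtlety is that $\phi$ need only restrict mod $t$ to \emph{some} Poisson automorphism $\phi_0\in\mathrm{Aut}(\eta)=\mathbb{H}^0(X\times\mathrm{Spec}(A),\Lambda_0,\mathcal{O}_X^{*\bullet})$, not necessarily the identity. This is exactly where the hypothesis enters: surjectivity of $\mathbb{H}^0(X\times\mathrm{Spec}(\tilde{A}),\Lambda_0,\mathcal{O}_X^{*\bullet})\to\mathbb{H}^0(X\times\mathrm{Spec}(A),\Lambda_0,\mathcal{O}_X^{*\bullet})$ allows me to lift $\phi_0^{-1}$ to a Poisson automorphism $\tilde{\psi}$ of $\tilde{\eta}$, and then $\phi\circ\tilde{\psi}\colon\tilde{\eta}\to G(v,\tilde{\eta})$ restricts to $\mathrm{id}_\eta$. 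Locally such an identity-reducing isomorphism must be multiplication by $1+tb_i$ for some $b_i\in\mathcal{O}_X(U_i)$, and its compatibility with the twisted transition functions and twisted Poisson connection forces $(\{h_{ij}\},\{W_i\})$ to be the coboundary of $\{b_i\}$, so $v=0$ in $\mathbb{H}^1(X,\Lambda_0,\mathcal{O}_X^\bullet)$, proving freeness.
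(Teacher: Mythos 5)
Your proposal is correct and follows essentially the same route as the paper: the same \v{C}ech twisting $(\tilde{F}_{ij},\tilde{Y}_i)\mapsto(\tilde{F}_{ij}+tf_{ij}h_{ij},\tilde{Y}_i+tW_i)$ to define the action (your multiplicative form $\tilde{F}_{ij}(1+th_{ij})$ agrees with the paper's additive one since $t^2=0$), and the same freeness argument in which the surjectivity hypothesis is used to correct the isomorphism so that it restricts to the identity over $A$, forcing the cocycle to be a coboundary. You in fact supply more detail than the paper, which leaves well-definedness and transitivity to the reader.
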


\begin{proof}
We will define a map $G:\mathbb{H}^1(X,\Lambda_0,\mathcal{O}_X^\bullet)\times p^{-1}(\eta)\to p^{-1}(\eta)$. Let $\tilde{\eta}=(\tilde{\mathcal{L}},\nabla_{\tilde{\mathcal{L}}})\in p^{-1}(\eta)$, which is represented by $\tilde{F}_{ij}\in \Gamma(U_{ij},\mathcal{O}_{X\times Spec(\tilde{A})}^*)$ and $\tilde{Y}_i\in \Gamma(U_i,T_X)\otimes Spec(\tilde{A})$ such that $\tilde{F}_{ij}\tilde{F}_{jk}=\tilde{F}_{ik}$, $[\Lambda_0,\tilde{Y}_i]=0$ and $Y_j-Y_i+[\Lambda_0,\log \tilde{F}_{ij}]=0$. Let $v=(\{W_i\},\{\frac{g_{ij}}{f_{ij}}\})\in \mathbb{H}^1(X,\Lambda_0,\mathcal{O}_X^\bullet)$. From $\tilde{\eta}$ and $v=(\{W_i\},\{\frac{g_{ij}}{f_{ij}}\})$, we define an another lifting $\tilde{\eta}'$, which is represented by $\tilde{F}_{ij}':=\tilde{F}_{ij}+t g_{ij}$ and $\tilde{Y}_i':=\tilde{Y}_i+t W_i$. Set $\tilde{\eta}':=G(v,\tilde{\eta})$. Then we can show that $G$ is well-defined and transitive.

Now assume that $\mathbb{H}^0(X\times Spec(\tilde{A}),\Lambda_0,\mathcal{O}_X^{*\bullet})\to \mathbb{H}^0(X\times Spec(A),\Lambda_0,\mathcal{O}_X^{*\bullet})$ is surjective. We show that the group action is free. Assume that for given $v=(\{W_i\},\{\frac{g_{ij}}{f_{ij}}\})\in \mathbb{H}^1(X,\Lambda_0,\mathcal{O}_X^\bullet)$, we have $G(v,\tilde{\eta})=\tilde{\eta}$. Let $\tilde{\eta}$ be represented by $(\tilde{\mathcal{L}},\nabla_{\tilde{\mathcal{L}}})$ and $G(v,\tilde{\eta})$ be represented by $(\tilde{\mathcal{L}}',\nabla_{\tilde{\mathcal{L}}'})$. This means that we have an isomorphism by multiplication $\cdot \tilde{a}_i:\mathcal{O}_X(U_i)\otimes \tilde{A}\to \mathcal{O}_X(U_i)\otimes \tilde{A}$ such that $\tilde{a}_j\cdot (\tilde{F}_{ij}+tg_{ij})=\tilde{F}_{ij}\cdot \tilde{a}_i$ and $\tilde{a}_i$ induces an automorphism $\cdot a:=\cdot a_i:\mathcal{O}_X(U_i)\otimes A\to \mathcal{O}_X(U_i)\otimes A$ on $(\mathcal{L},\nabla_{\mathcal{L}})$ so that $a\in \mathbb{H}^0(X\times Spec(A),\Lambda_0,\mathcal{O}_X^{*\bullet})$. By assumption, there is a lifting of an automorphism $\cdot \tilde{b}$ on $(\tilde{\mathcal{L}}',\nabla_{\tilde{\mathcal{L}}'})$ which induces $\cdot a^{-1}$. By replacing $\cdot \tilde{a}$ by $\tilde{b}\cdot \tilde{a}_i$, we may assume that a Poisson $\mathcal{O}_X$-module homomorphism $\cdot \tilde{a}_i$ induces an identity on $L$. Hence $a_i=1+tb_i$ for some $b_i\in \mathcal{O}_X(U_i)$. Then $\tilde{Y}_i+tW_i-\tilde{Y}_i+[\Lambda_0,\log 1+tb_i]=0$ so that $W_i=[\Lambda_0,b_i]$. $(1+tb_j)(\tilde{F}_{ij}+tg_{ij})=\tilde{F}_{ij}(1+tb_i)$ so that $b_jf_{ij}+ g_{ij}=f_{ij}b_i$. Hence $b_i-b_j=\frac{g_{ij}}{f_{ij}}$ so that $v=0$.

\end{proof}

\begin{theorem}\label{ppro2}
Let $(X,\Lambda_0)$ be a nonsingular projective Poisson variety and $(L,\nabla)$ be a Poisson invertible sheaf on $(X,\Lambda_0)$. Then $Def_{(L,\nabla)}$ is pro-representable.
\end{theorem}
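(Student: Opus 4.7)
The plan is to verify Schlessinger's criteria $(H_0)$--$(H_4)$ for $Def_{(L,\nabla)}$, mirroring the proof of Theorem \ref{ppro}. The axioms $(H_0),(H_1),(H_2)$ are handled routinely: for a fibre product $A'\times_A A''$ in $\bold{Art}$, two Poisson invertible sheaves over $A'$ and $A''$ restricting to the same data over $A$ can be glued into one over $A'\times_A A''$ by patching the \v{C}ech cocycles $(\{F_{ij}\},\{Y_i\})$ of Proposition \ref{tpl} component by component, and an analogous gluing controls morphisms. By Proposition \ref{tpl} the tangent space $Def_{(L,\nabla)}(k[\epsilon])$ is $\mathbb{H}^1(X,\Lambda_0,\mathcal{O}_X^\bullet)$; projectivity of $X$ makes each $H^p(X,\wedge^q T_X)$ finite dimensional, so the hypercohomology spectral sequence $E_2^{p,q}=H^p(X,\wedge^q T_X)\Rightarrow \mathbb{H}^{p+q}(X,\Lambda_0,\mathcal{O}_X^\bullet)$ gives $(H_3)$.

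The essential step, which I expect to be the only nontrivial input, is $(H_4)$: for a small extension $e:0\to(t)\to\tilde{A}\to A\to 0$ and any $\eta\in Def_{(L,\nabla)}(A)$, the fibre of $Def_{(L,\nabla)}(\tilde{A})\to Def_{(L,\nabla)}(A)$ above $\eta$ must be a principal homogeneous space under $\mathbb{H}^1(X,\Lambda_0,\mathcal{O}_X^\bullet)$. Transitivity is immediate from Proposition \ref{prorepre2}, and freeness follows from the same proposition once the restriction map
\[
\mathbb{H}^0(X\times Spec(\tilde{A}),\Lambda_0,\mathcal{O}_X^{*\bullet})\longrightarrow \mathbb{H}^0(X\times Spec(A),\Lambda_0,\mathcal{O}_X^{*\bullet})
\]
is shown to be surjective. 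This is the heart of the argument.

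To verify the surjectivity I would compute both sides explicitly. Reading off the \v{C}ech description of $\mathcal{O}_X^{*\bullet}$ base-changed to $R$, the group $\mathbb{H}^0(X\times Spec(R),\Lambda_0,\mathcal{O}_X^{*\bullet})$ consists of global units $a$ of $\mathcal{O}_X\otimes_k R$ satisfying $[\Lambda_0,\log a]=\frac{1}{a}[\Lambda_0,a]=0$. Since $X$ is a projective variety (in particular connected and reduced), $H^0(X,\mathcal{O}_X)=k$, so by flat base change $H^0(X\times Spec(R),\mathcal{O})=R$ and its units are $R^*$. For any $a\in R^*\subset R$, viewed as a constant function on $X\times Spec(R)$, one has $da=0$ and hence $[\Lambda_0,a]=0$; thus $\mathbb{H}^0(X\times Spec(R),\Lambda_0,\mathcal{O}_X^{*\bullet})=R^*$ both for $R=\tilde{A}$ and $R=A$. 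The unit map $\tilde{A}^*\to A^*$ is always surjective for a small extension because $\ker(\tilde{A}\to A)=(t)$ is nilpotent. This establishes the hypothesis of Proposition \ref{prorepre2}, yielding $(H_4)$ and therefore the pro-representability of $Def_{(L,\nabla)}$.
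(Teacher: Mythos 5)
Your proposal is correct and follows essentially the same route as the paper: $(H_0)$--$(H_3)$ are handled routinely, and $(H_4)$ is reduced via Proposition \ref{prorepre2} to the surjectivity of $\mathbb{H}^0(X\times Spec(\tilde{A}),\Lambda_0,\mathcal{O}_X^{*\bullet})\to \mathbb{H}^0(X\times Spec(A),\Lambda_0,\mathcal{O}_X^{*\bullet})$, which both you and the paper establish by identifying these groups with $\tilde{A}^*$ and $A^*$ using $H^0(X,\mathcal{O}_X)=k$. Your added remarks (constants are killed by $[\Lambda_0,\log-]$, and $\tilde{A}^*\to A^*$ is surjective because the kernel of $\tilde{A}\to A$ is nilpotent) only make explicit what the paper leaves implicit.
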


\begin{proof}
We can check Schlessinger's criterion $(H_0),(H_1),(H_2)$, and since $X$ is projective, $\mathbb{H}^1(X,\Lambda_0,\mathcal{O}_X^\bullet)$ is finite-dimensional so that $(H_3)$ is satisfied. Let $e:0\to (t)\to \tilde{A}\to A\to 0$ be a small extension in $\bold{Art}$. Since $H^0(X,\mathcal{O}_X)=k$, we have $H^0(X\times Spec(\tilde{A}))=\tilde{A}$ and $H^0(X\times Spec(A))=A$. Hence $\mathbb{H}^0(X\times Spec(\tilde{A}),\Lambda_0,\mathcal{O}_X^{*\bullet})=\tilde{A}^*\to \mathbb{H}^0(X\times Spec(A),\Lambda_0,\mathcal{O}_X^{*\bullet})=A^*$ is surjective so that $(H_4)$ follows from Proposition \ref{prorepre2}.
\end{proof}

\section{Deformations of sections of a Poisson invertible sheaf $(L,\nabla)$ in trivial Poisson deformations}\label{section5}

The formalism of deformations sections of an invertible sheaf presented in \cite{Ser06} (see p.141) can be extended to Poisson deformations. Let $(X,\Lambda_0)$ be a nonsingular projective Poisson variety, and let $(L,\nabla)$ be a Poisson invertible sheaf on $(X,\Lambda_0)$. We can define a homomorphism of complex of sheaves in the following way (see Remark \ref{lco})

\begin{center}
$\begin{CD}
\mathcal{O}_X @>-[-,\Lambda_0]>> T_X@>-[-,\Lambda_0]>> \wedge^2 T_X\\
@Vm_0VV @Vm_1VV @Vm_2VV \\
\mathbb{H}^0(X,\Lambda_0,L^\bullet,\nabla)^\vee\otimes L@>\nabla:=v_0>> \mathbb{H}^0(X,\Lambda_0,L^\bullet,\nabla)^\vee\otimes L\otimes T_X@>v_1>> \mathbb{H}^0(X,\Lambda_0,L^\bullet,\nabla)^\vee\otimes L\otimes \wedge^2 T_X
\end{CD}$
\end{center}
where for every open set $U\subset X$
\begin{align*}
m_i(U):\Gamma(U_,\wedge^iT_X)&\to \mathbb{H}^0(X,\Lambda_0,L^\bullet,\nabla)^\vee\otimes\Gamma(U,L\otimes \wedge^i T_X)=Hom(\mathbb{H}^0(X,\Lambda_0, L^\bullet,\nabla),\Gamma(U,L\otimes \wedge^i T_X))\\
                            a &\mapsto [s\mapsto a\otimes s|_U]
\end{align*}
for $s\in \mathbb{H}^0(X,\Lambda_0,L^\bullet,\nabla)$. This induces $m_i:\mathbb{H}^i(X,\Lambda_0,\mathcal{O}_X^\bullet)\to Hom(\mathbb{H}^0(X,\Lambda_0,L^\bullet,\nabla),\mathbb{H}^i(X,\Lambda_0,L^\bullet,\nabla))$.

\begin{definition}
Given an infinitesimal deformation $(\mathcal{L},\nabla_{\mathcal{L}})$ of $(L,\nabla)$ over $A\in \bold{Art}$, we have an induced restriction map
\begin{align*}
\rho:\mathbb{H}^0(X\times Spec(A),\Lambda_0,\mathcal{L}^\bullet,\nabla_{\mathcal{L}})\to \mathbb{H}^0(X,\Lambda_0,L^\bullet,\nabla)
\end{align*}
We say that a section $\sigma\in \mathbb{H}^0(X,\Lambda_0,L^\bullet,\nabla)$ extends to $(\mathcal{L},\nabla_\mathcal{L})$ if $\sigma\in Im(\rho)$
\end{definition}

\begin{proposition}\label{sec1}
Let $(\mathcal{L}_a,\nabla_a)$ be a first-order deformation of $(L,\nabla)$, corresponding to an element $a\in \mathbb{H}^1(X,\Lambda_0,\mathcal{O}_X^\bullet)$. A section $s\in \mathbb{H}^0(X,\Lambda_0,L^\bullet,\nabla)$ extends to a section $\tilde{s}\in \mathbb{H}^0(X\times Spec(k[\epsilon]),\Lambda_0,\mathcal{L}_a^\bullet, \nabla_a)$ of $(\mathcal{L}_a,\nabla_a)$ if and only if
$s\in ker[m_1(a)]$ where
\begin{align*}
m_1:\mathbb{H}^1 (X,\Lambda_0,\mathcal{O}_X^\bullet)\to Hom(\mathbb{H}^0(X,\Lambda_0,L^\bullet,\nabla),\mathbb{H}^1(X,\Lambda_0,L^\bullet,\nabla))
\end{align*}
defined as above.
\end{proposition}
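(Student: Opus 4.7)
The plan is to translate the extension condition into explicit \v{C}ech cocycle data with respect to an affine open cover $\mathcal{U} = \{U_i\}$ adapted to $(L,\nabla)$ and $(\mathcal{L}_a,\nabla_a)$, and then to observe that the obstruction to the existence of such an extension is exactly the \v{C}ech representative of $m_1(a)(s)$ in $\mathbb{H}^1(X,\Lambda_0,L^\bullet,\nabla)$. This mirrors the classical argument in \cite{Ser06} p.141, with the Lichnerowicz-Poisson hypercohomology replacing ordinary sheaf cohomology.

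First, I would choose $\mathcal{U}$ so that $(L,\nabla)$ is described by transition functions $\{f_{ij}\}$ with local generators $e_i$ (so $e_i = f_{ij}e_j$) and local Poisson vector fields $\{T_i\}$ satisfying $T_j - T_i + [\Lambda_0,\log f_{ij}] = 0$, and so that $a \in \mathbb{H}^1(X,\Lambda_0,\mathcal{O}_X^\bullet)$ is represented by a $1$-cocycle $(\{W_i\},\{g_{ij}/f_{ij}\}) \in C^0(\mathcal{U},T_X) \oplus C^1(\mathcal{U},\mathcal{O}_X)$. By the proof of Proposition \ref{tpl}, $(\mathcal{L}_a,\nabla_a)$ then has transition functions $F_{ij} = f_{ij} + \epsilon g_{ij}$ and local Poisson vector fields $T_i + \epsilon W_i$, while $s \in \mathbb{H}^0(X,\Lambda_0,L^\bullet,\nabla)$ is encoded by local data $s|_{U_i} = c_i e_i$ with $c_j = f_{ij}c_i$ on $U_{ij}$ and $\nabla s = 0$ on each $U_i$.

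Second, I would write a candidate extension as $\tilde{s}|_{U_i} = (c_i + \epsilon d_i)\tilde{e}_i$ for $d_i \in \mathcal{O}_X(U_i)$, and translate the two conditions that $\tilde{s} \in \mathbb{H}^0(X \times \mathrm{Spec}(k[\epsilon]),\Lambda_0,\mathcal{L}_a^\bullet,\nabla_a)$: the cocycle condition $\tilde{c}_j = F_{ij}\tilde{c}_i$ reduces, after using $c_j = f_{ij}c_i$, to $d_j e_j - d_i e_i = (g_{ij}/f_{ij})s$ on $U_{ij}$, and the flatness condition $\nabla_a \tilde{s} = 0$ reduces, after matching the coefficient of $\epsilon$, to $\nabla(d_i e_i) = -W_i \otimes s$ on $U_i$. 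These two equations together say precisely that $\{d_i e_i\} \in C^0(\mathcal{U},L)$ is a primitive of $(\{W_i \otimes s\},\{(g_{ij}/f_{ij})s\}) \in C^0(\mathcal{U},T_X \otimes L) \oplus C^1(\mathcal{U},L)$ under the total differential of the \v{C}ech double complex computing $\mathbb{H}^\bullet(X,\Lambda_0,L^\bullet,\nabla)$.

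Third, I would verify that $(\{W_i \otimes s\},\{(g_{ij}/f_{ij})s\})$ is a $1$-cocycle representing $m_1(a)(s)$. The cocycle relations follow from $[\Lambda_0,W_i] = 0$, $\nabla s = 0$, and $W_j - W_i + [\Lambda_0,g_{ij}/f_{ij}] = 0$, combined with the formulas $v_1(W_i \otimes s) = [\Lambda_0,W_i] \otimes s - W_i \wedge \nabla s$ and $\nabla((g_{ij}/f_{ij})s) = -[\Lambda_0,g_{ij}/f_{ij}] \otimes s$ from Remark \ref{lco}. That this class is $m_1(a)(s)$ is immediate from the definition of $m_i$ as pointwise tensoring with $s$. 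Consequently, $s$ extends if and only if this $1$-cocycle is a coboundary, if and only if $m_1(a)(s) = 0$. The main obstacle I anticipate is keeping the sign conventions consistent between $m_i$, the differentials $v_k$ of $L^\bullet$, and the \v{C}ech differentials in the proof of Proposition \ref{tpl}, so that the two local equations above match the coboundary relation literally rather than only up to sign; once these are fixed, the equivalence is a direct unwinding of definitions.
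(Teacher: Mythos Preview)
Your proposal is correct and follows essentially the same approach as the paper: choose an affine cover trivializing $(L,\nabla)$ and representing $a$, write a candidate lift $\tilde{s}$ in local coordinates, reduce the gluing and flatness conditions at order $\epsilon$ to the two equations expressing that $(\{W_i\otimes s\},\{(g_{ij}/f_{ij})s\})$ is a \v{C}ech coboundary, and identify this cocycle with $m_1(a)(s)$. The only differences are notational (you use local generators $e_i$ and coefficients $c_i$ where the paper works directly with the local functions $s_i$), and the paper omits the explicit verification that the obstruction is a cocycle since this is already encoded in $m_1$ being induced by a morphism of complexes.
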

\begin{proof}
We keep the notation in the proof of Proposition \ref{tpl}. Let $\mathcal{U}=\{U_i\}$ be an affine open covering of $X$ such that $(L,\nabla)$ is represented by a system of transition functions $\{f_{ij}\}$, $f_{ij}\in \Gamma(U_{ij},\mathcal{O}_X^*)$ and Poisson vector fields $\{T_i\}$, $T_i\in \Gamma(U_i,T_X)$. Let $a\in \mathbb{H}^1(X,\Lambda_0,\mathcal{O}_X^\bullet)$ be represented by $\{\frac{g_{ij}}{f_{ij}},W_i\}\in \mathcal{C}^1(\mathcal{U},\mathcal{O}_X)\oplus \mathcal{C}^0(\mathcal{U}, T_X)$. Then the first order deformation $(\mathcal{L}_a, \nabla_a)$ of  $(L,\nabla)$ is represented by $\{f_{ij}+\epsilon g_{ij}\}$ and $\{T_i+\epsilon W_i\}$. Then $m_1(a)(s)$ is represented by $(\{\frac{g_{ij}}{f_{ij}} s_j\},\{s_iW_i  \})\in \mathcal{C}^1(\mathcal{U},L)\oplus \mathcal{C}^0(\mathcal{U},T_X\otimes L)$.

Let's assume that $s\in \mathbb{H}^0(X,\Lambda_0,L^\bullet,\nabla)$ is represented by the cocycle $\{s_i\}$, $s_i\in \Gamma(U_i, \mathcal{O}_X)$, such that $s_i=f_{ij}s_j$ and $-[\Lambda_0,s_{i}]+s_{i}T_{i}=0$. In order for $s$ to extend to a section $\tilde{s}\in \mathbb{H}^0(X\times Spec(k[\epsilon]),\Lambda_0,\mathcal{L}_a^\bullet,\nabla_a)$, it is necessary and sufficient that there exist $\{t_i\}, t_i\in \Gamma(U_i,\mathcal{O}_X)$ such that $s_i+\epsilon t_i=(f_{ij}+\epsilon g_{ij})(s_j+\epsilon t_j)$ on $U_{ij}$ and $-[\Lambda_0, s_i+\epsilon t_i]+(s_{i}+\epsilon t_i)(T_i+\epsilon W_i)=0$ which are equivalent to $g_{ij}s_j=t_i-f_{ij}t_j$ and $-[\Lambda_0,t_i]+s_{i}W_i+t_i T_i=0$ so that $\frac{g_{ij}}{f_{ij}}s_j=f_{ji}t_i-t_j$ and $-[\Lambda_0,-t_i]-t_i T_i=s_i W_i$. Hence the $1$-cocyle 
$(\{\frac{g_{ij}}{f_{ij}}s_j\},\{ s_i W_i\})\in \mathcal{C}^1(\mathcal{U},L)\oplus \mathcal{C}^0(U, T\otimes L)$ is a coboundary 
in the following \v{C}ech reolsution 
\begin{center}
$\begin{CD}
\mathcal{C}^0(\mathcal{U},\wedge^2 T_X\otimes L)\\
@Av_1 AA\\
\mathcal{C}^0(\mathcal{U},T_X\otimes L)@>-\delta>>\mathcal{C}^1(\mathcal{U},T_X\otimes L)\\
@Av_0:=\nabla AA @Av_0:=\nabla AA\\
\mathcal{C}^0(\mathcal{U},L)@>\delta>>\mathcal{C}^1(\mathcal{U},L)@>-\delta>>\mathcal{C}^2(\mathcal{U},L)
\end{CD}$
\end{center}
Hence $m_1(a)(s)=0$.
\end{proof}

\section{Simultaneous deformations of a nonsingular Poisson variety $(X,\Lambda_0)$ and a Poisson invertible sheaf $(L,\nabla)$}\label{section6}

\begin{proposition}\label{patiyah}

Let $(X,\Lambda_0)$ be a nonsingular projective Poisson variety over $\mathbb{C}$ and $(L,\nabla)$ be a Poisson invertible sheaf on $(X,\Lambda_0)$. Then the Poisson Chern class $c(L,\nabla)$ gives an element in $Ext^1(T_X^\bullet,\mathcal{O}_X^\bullet)$ for which we call `Poisson Atiyah extension'  associated with the Poisson Chern class $c(L,\nabla)$, which extends `Atiyah extension' associated with the Chern class $c(L)$. The Poisson Aityah extension associated with $c(L,\nabla)$ is described by $0\to \mathcal{O}_X^\bullet \to \mathcal{E}_L^\bullet\to T_X^\bullet \to 0$, where
\begin{center}
$\begin{CD}
0@>>>\cdots @>>>\cdots @>>> \cdots@>>> 0\\
@. @AAA @AAA @AAA @.\\
0@>>> \wedge^2 T_X@>>>\mathcal{E}_L^2@>>> \wedge^3 T_X@>>> 0\\
@. @A[\Lambda_0,-]AA @AdAA @AA[\Lambda_0,-]A @. \\
0@>>> T_X@>>> \mathcal{E}_L^1@>>> \wedge^2 T_X@>>> 0\\
@. @A[\Lambda_0,-]AA @Ad AA @AA[\Lambda_0,-]A @.\\
0@>>> \mathcal{O}_X@>>> \mathcal{E}_L^0@>>> T_X@>>> 0
\end{CD}$
\end{center}
The sheaf $\mathcal{E}_L^i$ is locally free of rank $\binom{n}{i}+\binom{n}{i+1}=\binom{n+1}{i}$, where $\dim(X)=n$. We note that the complex $0\to \mathcal{O}_X\to \mathcal{E}_L^0\to T_X\to 0$ is the Atiyah extension associated with the Chern class $c(L)$. We denote the $i$-th hypercohomology group of the complex of sheaves $\mathcal{E}_L^\bullet:\mathcal{E}_L^0\xrightarrow{d}\mathcal{E}_L^2\xrightarrow{d}\mathcal{E}_L^3\xrightarrow{d} \cdots$ by $\mathbb{H}^i(X,\Lambda_0,\mathcal{E}_L^\bullet)$. 
\end{proposition}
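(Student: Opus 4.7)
\emph{Proof plan.} The plan is to realize $\mathcal{E}_L^\bullet$ as exterior powers of the classical Atiyah algebroid of $L$, endowed with a Schouten differential induced by a Maurer--Cartan lift $\tilde{\Lambda}_0\in \Gamma(X,\wedge^2\mathcal{E}_L^0)$ of $\Lambda_0$ built from the flat Poisson connection $\nabla$. First recall that the classical Atiyah bundle $\mathcal{E}_L^0$, fitting into $0\to \mathcal{O}_X\to \mathcal{E}_L^0\to T_X\to 0$ with extension class $c(L)\in H^1(X,\Omega_X^1)=\mathrm{Ext}^1(T_X,\mathcal{O}_X)$, carries the Atiyah Lie-algebroid structure from the commutator of first-order differential operators on $L$, with anchor $\mathcal{E}_L^0\twoheadrightarrow T_X$ and with $\mathcal{O}_X\hookrightarrow \mathcal{E}_L^0$ a central (abelian) ideal. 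Set $\mathcal{E}_L^k:=\wedge^{k+1}\mathcal{E}_L^0$. Taking the $(k+1)$-st exterior power of the Atiyah extension then produces the horizontal rows $0\to \wedge^k T_X\to \mathcal{E}_L^k\to \wedge^{k+1}T_X\to 0$ of the diagram, and each $\mathcal{E}_L^k$ is locally free of rank $\binom{n+1}{k+1}=\binom{n}{k}+\binom{n}{k+1}$.

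The central step is to construct $\tilde{\Lambda}_0$. I would choose an affine open cover $\mathcal{U}=\{U_i\}$ trivializing $L$ and a \v{C}ech representative $(\{T_i\},\{f_{ij}\})$ of $(L,\nabla)\in \mathbb{H}^1(X,\Lambda_0,\mathcal{O}_X^{*\bullet})$, so that $[\Lambda_0,T_i]=0$ and $T_j-T_i+[\Lambda_0,\log f_{ij}]=0$. Under the local splitting $\mathcal{E}_L^0|_{U_i}\cong \mathcal{O}_X|_{U_i}\oplus T_X|_{U_i}$, declare
\[
\tilde{\Lambda}_0|_{U_i}:=1\wedge T_i+\Lambda_0\in (\mathcal{O}_X\wedge T_X)\oplus \wedge^2 T_X=\wedge^2\mathcal{E}_L^0|_{U_i}.
\]
The Atiyah transition $(a,v)\mapsto (a+v(\log f_{ij}),v)$ induces on $\wedge^2\mathcal{E}_L^0$ the map $(w,\alpha)\mapsto (w+\iota_{d\log f_{ij}}\alpha,\alpha)$; substituting $(w,\alpha)=(T_j,\Lambda_0)$ and using $\iota_{d\log f_{ij}}\Lambda_0=i_{\Lambda_0}(d\log f_{ij})=[\Lambda_0,\log f_{ij}]$ together with the cocycle relation shows that the local expressions patch to a global bivector $\tilde{\Lambda}_0\in \Gamma(X,\wedge^2\mathcal{E}_L^0)$.

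With $\tilde{\Lambda}_0$ in hand, define $d:=[\tilde{\Lambda}_0,-]_{\mathcal{E}}:\mathcal{E}_L^k\to \mathcal{E}_L^{k+1}$, where $[-,-]_{\mathcal{E}}$ is the Schouten bracket on $\wedge^\bullet \mathcal{E}_L^0$ associated with the Atiyah-algebroid structure. To see $d^2=0$, it suffices to verify $[\tilde{\Lambda}_0,\tilde{\Lambda}_0]_{\mathcal{E}}=0$ and invoke graded Jacobi: expanding $[1\wedge T_i+\Lambda_0,\,1\wedge T_i+\Lambda_0]_{\mathcal{E}}$ by graded Leibniz, the term $[\Lambda_0,\Lambda_0]_{\mathcal{E}}$ vanishes since on $T_X\subset \mathcal{E}_L^0$ the algebroid bracket is the usual Lie bracket and $[\Lambda_0,\Lambda_0]=0$; the cross term $2[1\wedge T_i,\Lambda_0]_{\mathcal{E}}$ vanishes because $1\in \mathcal{O}_X$ is central in the algebroid and $[T_i,\Lambda_0]=0$ by flatness; the self-term $[1\wedge T_i,1\wedge T_i]_{\mathcal{E}}$ vanishes similarly. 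Commutativity of the whole diagram is then automatic: the anchor intertwines $d$ with $[\Lambda_0,-]$ on $T_X^\bullet$ (since the anchor sends $\tilde{\Lambda}_0\mapsto \Lambda_0$), and on $\mathcal{O}_X^\bullet\subset \mathcal{E}_L^\bullet$ the operator $[\tilde{\Lambda}_0,-]_{\mathcal{E}}$ reduces to $[\Lambda_0,-]$ by centrality of $\mathcal{O}_X\subset \mathcal{E}_L^0$.

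Finally, to identify the extension class with $c(L,\nabla)$, pick local splittings of $0\to \mathcal{O}_X^\bullet\to \mathcal{E}_L^\bullet\to T_X^\bullet\to 0$ and compute their \v{C}ech coboundary: the degree-zero obstruction to a global splitting reproduces the classical Atiyah cocycle $\{d\log f_{ij}\}$, while the failure of any such splitting to commute with the differentials records the Poisson vector fields $\{T_i\}$; together these assemble into $(\{d\log f_{ij}\},\{T_i\})\in \mathbb{H}^1(\Omega_X^1\xrightarrow{i_{\Lambda_0}} T_X)$, which is exactly the image of $[L,\nabla]$ under the map $c$ of Definition \ref{pchern}. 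The principal obstacle will be the Maurer--Cartan verification $[\tilde{\Lambda}_0,\tilde{\Lambda}_0]_{\mathcal{E}}=0$, which requires a careful bookkeeping of the Atiyah Schouten bracket and simultaneously uses $[\Lambda_0,\Lambda_0]=0$, $[\Lambda_0,T_i]=0$, and the centrality of the anchor kernel, together with the glueing relation $T_j-T_i+[\Lambda_0,\log f_{ij}]=0$ needed for $\tilde{\Lambda}_0$ to be globally defined.
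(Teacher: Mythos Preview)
Your approach is correct and genuinely different from the paper's. The paper proceeds entirely by hand: it defines $\mathcal{E}_L^p$ by explicitly gluing $\wedge^p T_X|_{U_i}\oplus\wedge^{p+1}T_X|_{U_i}$ along the rule $a_j-a_i=[b_i,\log h_{ij}]$, writes down the differential $d(a_i,b_i)=([\Lambda_0,a_i]+(-1)^{p+1}[T_i^0,b_i],[\Lambda_0,b_i])$ as a bare formula, and then verifies $d^2=0$, well-definedness across charts, and independence of the representative of $c(L,\nabla)$ by direct computation. You instead recognize $\mathcal{E}_L^p$ as $\wedge^{p+1}\mathcal{E}_L^0$ for the Atiyah Lie algebroid, build a global Maurer--Cartan bivector $\tilde{\Lambda}_0=\mathbf{1}\wedge T_i+\Lambda_0$, and let $d=[\tilde{\Lambda}_0,-]_{\mathcal{E}}$. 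This buys you $d^2=0$ for free from graded Jacobi, and the compatibility with $\mathcal{O}_X^\bullet\hookrightarrow\mathcal{E}_L^\bullet\twoheadrightarrow T_X^\bullet$ is automatic from centrality of $\mathbf{1}$ and the anchor. Unwinding your differential under the identification $(a,b)\leftrightarrow(-1)^p\mathbf{1}\wedge a+b$ reproduces the paper's formula exactly, so you should be prepared to track that sign. What you gain is structure and brevity; what the paper gains is self-containment (no algebroid formalism needed). One point the paper treats that you omit: it argues, using the compact K\"ahler hypothesis, that if $c(L,\nabla)=c(L',\nabla')$ then $(\mathcal{E}_L^\bullet,d)\cong(\mathcal{E}_{L'}^\bullet,d')$ as complexes, so that the construction really depends only on the Poisson Chern class; in your framework this would amount to showing that two choices of $\tilde{\Lambda}_0$ differing by an exact term yield isomorphic Lichnerowicz complexes, which you should add.
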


\begin{proof}
Let $\mathcal{U}=\{U_i\}$ be an affine open covering of $X$ such that $(L,\nabla)$ is represented by a system of transition functions $\{h_{ij}\}$, where $h_{ij}\in \Gamma(U_{ij},\mathcal{O}_X^*)$ and Poisson vector fields $\{T_i^0\}$, where $T_i^0\in \Gamma(U_i,T_X)$ so that $h_{ij}h_{jk}=h_{ik},[\Lambda_0,T_i^0]=0$ and $T_j^0-T_i^0+[\Lambda_0,\log h_{ij}]=0$. Then $c(L,\nabla)$ is represented by the \v{C}ech 1-cocycle
\begin{align*}
(\{d\log h_{ij}=\frac{dh_{ij}}{h_{ij}}\},\{T_i^0\})\in \mathcal{C}^1(\mathcal{U},\Omega_X^1)\oplus\mathcal{C}^0(\mathcal{U},T_X)
\end{align*}

We define the sheaf $\mathcal{E}_L^p$ in the following way. The sheaf $\mathcal{E}_L^p|_{U_i}$ is locally isomorphic to $\wedge^p T_{X}|_{U_i}\oplus \wedge^{p+1}T_X|_{U_i}$. A section $(a_i,b_i)$ of $\wedge^p T_X|_{U_i}\oplus \wedge^{p+1}T_X|_{U_i}$ and a section $(a_j,b_j)$ of $\wedge^p T_X|_{U_j}\oplus \wedge^{p+1}T_X|_{U_j}$ are identified on $U_{ij}$ if and only if $b_i=b_j$ and $a_j-a_i=[b_i,\log h_{ij}]$. Then we can check $\mathcal{E}_L^p$ is well-defined.

We define the differential $d:\mathcal{E}_L^p\to \mathcal{E}_L^{p+1}$. The differntial $d$ is locally defined in the following way. $d:\wedge^p T_X|_{U_i}\oplus\wedge^{p+1}T_X|_{U_i}\to \wedge^{p+1}T_X|_{U_i}\oplus\wedge^{p+2}T_X|_{U_i}$ is defined by $(a_i, b_i)\mapsto ([\Lambda_0,a_i]+(-1)^{p+1}[ T_i^0,b_i],[\Lambda_0,b_i])$. We check this define a differential $(d^2=0)$. Indeed, we have $[\Lambda_0,[\Lambda_0,a_i]]+(-1)^{p+1}[\Lambda_0,[T_i^0,b_i]]+(-1)^{p+2}[T_i^0,[\Lambda_0,b_i]]=0$ since $[\Lambda_0,T_i^0]=0$. We show that  $d$ is welld-defined. In other words, $([\Lambda_0,a_i]+(-1)^{p+1}[ T_i^0, b_i],[\Lambda_0,b_i])$ on $U_i$ is identified with $([\Lambda_0,a_j]+(-1)^{p+1}[T_j^0, b_j],[\Lambda_0,b_j])$ on $U_j$. Note that $b_i=b_j$ and $a_j-a_i=[b_i,\log h_{ij}]$. Then $[\Lambda_0,a_j-a_i]+(-1)^{p+1}[T_j^0-T_i^0, b_i]=[\Lambda_0,[b_i, \log h_{ij}]]-(-1)^{p+1}[[\Lambda_0,\log h_{ij}],b_i]=[[\Lambda_0,b_i],\log h_{ij}]+(-1)^{p}[b_i,[\Lambda_0,\log h_{ij}]]-(-1)^{p+1}[[\Lambda_0,\log h_{ij}],b_i]=[[\Lambda_0,b_i],\log h_{ij}].$

Let $(L,\nabla)$ and $(L',\nabla')$ be two Poisson invertible sheaves represented by $(\{f_{ij}\},\{T_i\})$ and $(\{f_{ij}'\},\{T_i'\})$ respectively. Assume that $c(L,\nabla)=c(L',\nabla')$ so that there exists $\{h_i\}\in \mathcal{C}^0(\mathcal{U},\Omega_X^1)$ such that $h_i-h_j=d\log f_{ij}-d\log f_{ij}'$ and $i_{\Lambda_0}h_i=T_i-T_i'$. We show that the complex $(\mathcal{E}_L^\bullet,d)$ associated with $(\{d\log f_{ij}\},\{T_i\})$ is isomorphic to the complex $(\mathcal{E}_{L'}^\bullet,d')$ associated with $(\{d\log f_{ij}'\},\{T_i'\})$. Since $X$ is a compact K\"{a}hler and $L\otimes L^{'-1}$ has the trivial Chern class, we may assume that $f_{ij}\cdot f_{ij}^{'-1}=a_{ij}$ is a constant so that $h_i-h_j=0$. Hence $\{h_i\}$ define a global $1$-form $H\in H^0(X,\Omega_X^1)$ with $dH=0$. So we may assume that there exists $\{c_i\}\in \mathcal{C}^0(\mathcal{U},\mathcal{O}_X)$ such that $dc_i=h_i$. Hence $i_{\Lambda_0}h_i=[\Lambda_0, c_i]=T_i-T_i'$. Now we show that $\phi:(\mathcal{E}_L^\bullet,d)\cong (\mathcal{E}_{L'}^{\bullet},d')$. $\phi=\{\phi_p\}$ is locally defined in the following way:  $\phi_p:\mathcal{E}_L^p|_{U_i}=\wedge^p T_X|_{U_i}\oplus \wedge^{p+1}T_X|_{U_i}\to \mathcal{E}_{L'}^p|_{U_i}= \wedge^p T_X|_{U_i}\oplus \wedge^{p+1}T_X|_{U_i}$, $(a,b)\mapsto (a+[b,c_i], b)$.
We claim that this is well-defined. First we show that $\phi=\{\phi_p\}$ is independent of $U_i$. We have to show that the following diagram commutes.
\begin{center}
$\begin{CD}
\mathcal{E}_L^{p}|_{U_j}=\wedge^p T_X|_{U_j}\oplus \wedge^{p+1}T_X|_{U_j} @>\phi>> \mathcal{E}_{L'}^p|_{U_j}=\wedge^p T_X|_{U_j}\oplus \wedge^{p+1}T_X|_{U_j}\\
@A\cong AA @AA\cong A\\
\mathcal{E}_L^{p}|_{U_i}=\wedge^p T_X|_{U_i}\oplus \wedge^{p+1}T_X|_{U_i} @>\phi>> \mathcal{E}_{L'}^p|_{U_i}=\wedge^p T_X|_{U_i}\oplus \wedge^{p+1}T_X|_{U_i}
\end{CD}$
\end{center}

Indeed, $(a,b)\in \mathcal{E}_L^p |_{U_i}\mapsto ( a+[b,\log f_{ij}],b)\in \mathcal{E}_L^p|_{U_j}\mapsto (a+[b,\log f_{ij}]+[b,c_j],b)\in \mathcal{E}_{L'}^p|_{U_j}$ and $(a,b)\in \mathcal{E}_L^p|_{U_i}\mapsto(a+[b,c_i],b)\in \mathcal{E}_{L'}^p|_{U_i}\mapsto (a+[b,c_i]+[b,\log f_{ij}'],b)\in \mathcal{E}_{L'}^p|_{U_j}$ which are same since $[b,c_i-c_j]=[b,\log f_{ij}-\log f_{ij}']=0$. 

Next we show that $\phi\circ d=d'\circ \phi$.
\begin{center}
$\begin{CD}
\mathcal{E}_L^{p}|_{U_i}=\wedge^p T_X|_{U_i}\oplus \wedge^{p+1}T_X|_{U_i} @>\phi>> \mathcal{E}_{L'}^p|_{U_i}=\wedge^p T_X|_{U_i}\oplus \wedge^{p+1}T_X|_{U_i}\\
@AdAA @AAd'A\\
\mathcal{E}_L^{p-1}|_{U_i}=\wedge^{p-1}T_X|_{U_i}\oplus \wedge^{p}T_X|_{U_i} @>\phi>> \mathcal{E}_{L'}^{p-1}|_{U_i}=\wedge^{p-1} T_X|_{U_i}\oplus \wedge^{p}T_X|_{U_i}
\end{CD}$
\end{center}
$(a,b)\in \mathcal{E}_L^{p-1}|_{U_i}\mapsto ([\Lambda_0,a]+(-1)^p[T_i,b],[\Lambda_0,b] )\in \mathcal{E}_L^p|_{U_i}\mapsto ([\Lambda_0,a]+(-1)^p[T_i,b]+[[\Lambda_0,b],c_i],[\Lambda_0,b])\in \mathcal{E}_{L'}^p|_{U_i}$ and $(a,b)\in \mathcal{E}_L^{p-1}|_{U_i}\mapsto (a+[b,c_i],b)\in \mathcal{E}_{L'}^{p-1}|_{U_i}\mapsto ([\Lambda_0,a]+[\Lambda_0,[b,c_i]]+(-1)^p[T_i',b],[\Lambda_0,b])\in \mathcal{E}_{L'}^p|_{U_i}$ which are same since $(-1)^p[T_i-T_i',b]+[[\Lambda_0,b],c_i]-[\Lambda_0,[b,c_i]]=(-1)^p[[\Lambda_0,c_i],b]+[[\Lambda_0,b],c_i]-[\Lambda_0,[b,c_i]]$ and $[\Lambda_0,[b,c_i]]=[[\Lambda_0,b],c_i]+(-1)^{p+1}[b,[\Lambda_0,c_i]]=[[\Lambda_0,b],c_i]+(-1)^p[[\Lambda_0,c_i],b]$. Hence we get the claim.

When $c(L,\nabla)=0$, $\mathcal{E}_L^\bullet$ is isomorphic to $\mathcal{O}_X\oplus T_X\xrightarrow{[\Lambda_0,-]} T_X\oplus \wedge^2 T_X\xrightarrow{[\Lambda_0,-]}\wedge^2 T_X\oplus \wedge^3 T_X\to \cdots$ so that $\mathbb{H}^i(X,\Lambda_0,\mathcal{E}_L^\bullet)=\mathbb{H}^i(X,\Lambda_0,\mathcal{O}_X^\bullet)\oplus \mathbb{H}^i(X,\Lambda_0, T_X^\bullet)$.

\end{proof}

\begin{definition}\label{simul}
Let $A$ be in $\bold{Art}$. Let $(X,\Lambda_0, L,\nabla)$ be a pair of a nonsingular Poisson variety $(X,\Lambda_0)$ and a Poisson invertible sheaf $(L,\nabla)$ on $(X,\Lambda_0)$. An infinitesimal deformation of $(X,\Lambda_0, L,\nabla)$ over $A$ consists of a pair $(\mathcal{X},\Lambda,\mathcal{L},\nabla_\mathcal{L})$
\begin{center}
$\xi:$$\begin{CD}
(X,\Lambda_0)@>>> (\mathcal{X},\Lambda)\\
@VVV @VVV\\
Spec(k)@>>> Spec(A)
\end{CD}$
\end{center}
is an infinitesimal Poisson deformation of $(X,\Lambda)$ over $A$ and $(\mathcal{L},\nabla_\mathcal{L})$ is a Poisson invertible sheaf on $(\mathcal{X},\Lambda)$ over $A$ such that $(L,\nabla)=(\mathcal{L}|_X,\nabla_{\mathcal{L}}|_X)$. We say that $(\mathcal{L},\nabla_{\mathcal{L}})$ is a $($Poisson$)$ deformation of $(L,\nabla)$ along $\xi$. Two deformations $(\mathcal{X},\Lambda, \mathcal{L},\nabla_\mathcal{L})$ and $(\mathcal{X}',\Lambda',\mathcal{L}',\nabla_\mathcal{L'})$ of $(X,\Lambda_0,L,\nabla)$ over $A$ is called isomorphic  if there is a Poisson isomorphism of deformation $f:(\mathcal{X},\Lambda)\to (\mathcal{X}',\Lambda')$ over $A$ and an isomorphism $(\mathcal{L},\nabla_{\mathcal{L}})\to (f^*\mathcal{L}',f^*\nabla_{\mathcal{L}'})$. Then we can define a functor of Artin rings
\begin{align*}
Def_{(X,\Lambda_0,L,\nabla)}:\bold{Art}&\to (sets)\\
A&\mapsto Def_{(X,\Lambda_0,L,\nabla)}(A)=\{\text{deformations of $(X,\Lambda_0,L,\nabla)$ over $A$}\}/\text{isomorphism}
\end{align*}
\end{definition}

\begin{proposition}[compare \cite{Ser06} Theorem 3.3.11 page 146]\label{spo}
Let $(X,\Lambda_0, L,\nabla)$ be a pair of a nonsingular projective Poisson variety $(X,\Lambda_0)$ over $k=\mathbb{C}$ and a Poisson invertible sheaf $(L,\nabla)$ on $(X,\Lambda_0)$. Then
\begin{enumerate}
\item There is a canonical isomorphism
\begin{align*}
Def_{(X,\Lambda_0,L,\nabla)}(k[\epsilon])= \frac{\text{first-order deformations of $(X,\Lambda_0,L,\nabla)$}}{\text{isomorphism}}\cong \mathbb{H}^1(X,\Lambda_0,\mathcal{E}_L^\bullet)
\end{align*}
where $\mathcal{E}_L^\bullet$ is a complex from `Poisson Atiyah extension' associated with the Poisson Chern class $c(L,\nabla)$.
\item Let $A\in \bold{Art}$ and $\eta=(\mathcal{X},\Lambda,\mathcal{L},\nabla_{\mathcal{L}})$ be an infinitesimal deformation of $(X,\Lambda_0,L,\nabla)$ over $A$. Then, to every small extension $e:0\to (t)\to \tilde{A}\to A\to 0$, we can associate an element $o_\eta(e)\in \mathbb{H}^2(X,\Lambda_0,\mathcal{E}_L^\bullet)$ called the obstruction lifting of $\eta$ to $\tilde{A}$, which is $0$ if and only if a lifting of $\eta$ exist.
\item The Poisson Chern class $c(L,\nabla)$ defines a map $\mathbb{H}^1(X,\Lambda_0,T_X^\bullet)\xrightarrow{c(L,\nabla)} \mathbb{H}^2 (X,\Lambda_0,\mathcal{O}_X^\bullet)$ such that given a first-order deformation $\xi$ of $(X,\Lambda_0)$, there is a first-order deformation of $(L,\nabla)$ along $\xi$ if and only if 
\begin{align*}
c(L,\nabla)(\kappa(\xi))=0.
\end{align*}
Recall that $\kappa(\xi)$ is the element of $\mathbb{H}^1(X,\Lambda_0,T_X^\bullet)$ associated with the first-order deformation $\xi$ $($see Proposition $\ref{3q}$ $)$.
\end{enumerate}

\end{proposition}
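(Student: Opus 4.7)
The plan is to extend the \v{C}ech-cocycle calculations of Propositions~\ref{3q}, \ref{lifting}, and \ref{tpl} simultaneously, using the Poisson Atiyah complex $\mathcal{E}_L^\bullet$ constructed in Proposition~\ref{patiyah}. Fix an affine open cover $\mathcal{U}=\{U_i\}$ trivializing both $\mathcal{X}$ and $\mathcal{L}$, writing $(\mathcal{X}|_{U_i},\Lambda|_{U_i})\cong (U_i\times\mathrm{Spec}(k[\epsilon]),\Lambda_0+\epsilon\Lambda_i)$ via isomorphisms $\theta_i$, and $(\mathcal{L}|_{U_i},\nabla_{\mathcal{L}}|_{U_i})$ as transition functions $F_{ij}=h_{ij}+\epsilon g_{ij}$ with Poisson vector fields $Y_i=T_i^0+\epsilon W_i$. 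The transition $\theta_{ij}:=\theta_j^{-1}\theta_i$ produces $p_{ij}\in\Gamma(U_{ij},T_X)$ as in Proposition~\ref{3q}. Under the local identifications $\mathcal{E}_L^0|_{U_i}\cong\mathcal{O}_X\oplus T_X$ and $\mathcal{E}_L^1|_{U_i}\cong T_X\oplus\wedge^2 T_X$ from Proposition~\ref{patiyah}, the data
\begin{equation*}
\bigl(\{(g_{ij}/h_{ij},\,p_{ij})\},\,\{(W_i,\,-\Lambda_i)\}\bigr)\in C^1(\mathcal{U},\mathcal{E}_L^0)\oplus C^0(\mathcal{U},\mathcal{E}_L^1)
\end{equation*}
assembles into a $1$-cocycle of the total \v{C}ech complex of $\mathcal{E}_L^\bullet$, the three cocycle relations being (a) the cocycle relation for $F_{ij}$ and $\theta_{ij}$, (b) the Poisson-structure conditions $[\Lambda_0,\Lambda_i]=0$ and $[\Lambda_0,W_i]=0$, and (c) the two mixed compatibilities, namely $\Lambda_j-\Lambda_i-[\Lambda_0,p_{ij}]=0$ from Lemma~\ref{3l} and $W_j-W_i+[\Lambda_0,g_{ij}/h_{ij}]-[p_{ij},T_j^0]=0$ from the Poisson-connection transition. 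I would then verify directly that an isomorphism of pair-deformations translates into changing this cocycle by a \v{C}ech coboundary of an element of $C^0(\mathcal{U},\mathcal{E}_L^0)$, yielding (1).

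For (2), I would choose arbitrary lifts $\tilde{F}_{ij},\tilde{\theta}_{ij},\tilde{Y}_i,\tilde{\Lambda}_i$ to $\tilde{A}$ satisfying only the restriction conditions modulo $(t)$, and measure the failure of the six compatibilities: the cocycle defect $\tilde{F}_{ij}\tilde{F}_{jk}\tilde{F}_{ik}^{-1}=1+tg_{ijk}$, the cocycle defect $\tilde{\theta}_{ij}\tilde{\theta}_{jk}\tilde{\theta}_{ik}^{-1}\leftrightarrow t\tilde{d}_{ijk}$, the Jacobi defect $[\tilde{\Lambda}_i,\tilde{\Lambda}_i]=t\Pi_i$, the Poisson-flatness defect $[\Lambda_0,\tilde{Y}_i]=tS_i$, and the mixed defects $\tilde{f}_{ij}\tilde{\Lambda}_j-\tilde{\Lambda}_i=t\Lambda'_{ij}$ and $\tilde{Y}_j-\tilde{Y}_i+[\Lambda_0,\log\tilde{F}_{ij}]=tQ_{ij}$. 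Packaging these into
\begin{equation*}
\bigl(\{(g_{ijk},\,\tilde{d}_{ijk})\},\,\{(Q_{ij},\,\Lambda'_{ij})\},\,\{(S_i,\,\Pi_i)\}\bigr)
\end{equation*}
will produce a $2$-cocycle in the total \v{C}ech complex of $\mathcal{E}_L^\bullet$. The verification that it is a cocycle combines the separate computations of Propositions~\ref{lifting} and \ref{tpl}; the additional checks involving mixed terms (such as the identity relating $\Pi_i,\Pi_j$ to $\Lambda'_{ij}$ and $\tilde{d}_{ijk}$ to $\Lambda'_{\bullet}$) follow the same pattern. Independence of the choices up to coboundary and the equivalence $o_\eta(e)=0\iff$ a lift exists are then bookkeeping extensions of those proofs.

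For (3), I would simply take the long exact sequence associated with the short exact sequence $0\to\mathcal{O}_X^\bullet\to\mathcal{E}_L^\bullet\to T_X^\bullet\to 0$ of Proposition~\ref{patiyah}:
\begin{equation*}
\cdots\to\mathbb{H}^1(X,\Lambda_0,\mathcal{E}_L^\bullet)\to\mathbb{H}^1(X,\Lambda_0,T_X^\bullet)\xrightarrow{\partial}\mathbb{H}^2(X,\Lambda_0,\mathcal{O}_X^\bullet)\to\cdots
\end{equation*}
and define the map in (3) to be $\partial$. By part~(1), a first-order deformation $\xi$ of $(X,\Lambda_0)$ admits a first-order deformation of $(L,\nabla)$ along it precisely when $\kappa(\xi)$ lies in the image of the forgetful map $\mathbb{H}^1(\mathcal{E}_L^\bullet)\to\mathbb{H}^1(T_X^\bullet)$; by exactness this is equivalent to $\partial\kappa(\xi)=0$. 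The one point that requires a hands-on calculation -- and which I expect to be the main obstacle -- is to see that $\partial$ actually deserves the name \emph{multiplication by} $c(L,\nabla)$: this is done by choosing the local splitting $\mathcal{E}_L^p|_{U_i}=\wedge^p T_X\oplus\wedge^{p+1}T_X\twoheadrightarrow \wedge^{p+1}T_X$, lifting a representative $(\{p_{ij}\},\{-\Lambda_i\})$ of $\kappa(\xi)$ to $(\{(0,p_{ij})\},\{(0,-\Lambda_i)\})$, applying the total differential, and reading off -- using the formula for $d$ in Proposition~\ref{patiyah} together with the representative $(\{d\log h_{ij}\},\{T_i^0\})$ of $c(L,\nabla)$ -- that the result is precisely the cup-product with $c(L,\nabla)$.
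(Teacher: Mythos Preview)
Your overall architecture matches the paper's: pack the \v{C}ech data into the total complex of $\mathcal{E}_L^\bullet$ and read off the cocycle/coboundary conditions. But several of the specific relations you wrote down are the ones from the \emph{trivial} Poisson deformation (Proposition~\ref{tpl}), not the simultaneous one, and this is exactly where the Atiyah twist enters.

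In (1)(b) you assert $[\Lambda_0,W_i]=0$. That is false here: the flatness of $\nabla_{\mathcal{L}}$ is with respect to the deformed structure $\Lambda_0+\epsilon\Lambda_i$, so $[\Lambda_0+\epsilon\Lambda_i,\,T_i^0+\epsilon W_i]=0$ gives $[\Lambda_0,W_i]+[\Lambda_i,T_i^0]=0$. This mixed term is precisely $d(W_i,-\Lambda_i)=0$ for the Atiyah differential $d(a,b)=([\Lambda_0,a]+(-1)^{p+1}[T_i^0,b],[\Lambda_0,b])$ of Proposition~\ref{patiyah}; without it $(W_i,-\Lambda_i)$ is not a $d$-cocycle in $\mathcal{E}_L^1$. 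Similarly your mixed relation in (c) omits the term $[\Lambda_i,\log h_{ij}]$, which is exactly the identification rule for sections of $\mathcal{E}_L^1$ across $U_i\cap U_j$; the paper's computation shows the correct relation is
\[
W_j-W_i+[\Lambda_i,\log h_{ij}]+[\Lambda_0,g_{ij}/h_{ij}]-[T_j^0,p_{ij}]=0,
\]
which on $U_j$ reads $\delta(\{(W_i,-\Lambda_i)\})+d(\{(g_{ij}/h_{ij},p_{ij})\})=0$.

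The same slip recurs in (2): your defects $[\Lambda_0,\tilde{Y}_i]=tS_i$ and $\tilde{Y}_j-\tilde{Y}_i+[\Lambda_0,\log\tilde{F}_{ij}]=tQ_{ij}$ should be $[\tilde{\Lambda}_i,\tilde{T}_i]=tS_i$ and $\tilde{f}_{ij}\tilde{T}_j-\tilde{T}_i+[\tilde{\Lambda}_i,\log\tilde{F}_{ij}]=tQ_{ij}$. Using $\Lambda_0$ instead of $\tilde{\Lambda}_i$ (and forgetting to transport $\tilde{T}_j$ by $\tilde{f}_{ij}$) loses the cross-terms that make $(\{(-2S_i,\Pi_i)\},\{(-2Q_{ij},2\Lambda'_{ij})\},\{(-2g_{ijk},-2\tilde d_{ijk})\})$ close up as a $2$-cocycle in $\mathcal{E}_L^\bullet$; the paper's verification of $-\delta\{(-2S_i,\Pi_i)\}+d\{(-2Q_{ij},2\Lambda'_{ij})\}=0$ hinges on $[\tilde\Lambda_i,[\tilde\Lambda_i,\log\tilde F_{ij}]]=\tfrac12[[\tilde\Lambda_i,\tilde\Lambda_i],\log\tilde F_{ij}]$, which is invisible if you replace $\tilde\Lambda_i$ by $\Lambda_0$.

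For (3), your long-exact-sequence argument is cleaner than the paper's explicit \v{C}ech formula and is correct; the paper instead writes down the map $(\{-\Lambda_i\},\{p_{ij}\})\mapsto(\{[-\Lambda_i,T_i^0]\},\{[-\Lambda_i,\log h_{ij}]+[T_j^0,p_{ij}]\},\{-[p_{ij},\log h_{jk}]\})$ directly and checks by hand that its vanishing is equivalent to extendability. Your identification of $\partial$ with cup product by $c(L,\nabla)$ via the local splitting is exactly the computation that recovers this formula.
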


\begin{proof}

Let $\eta=(\xi,\mathcal{L},\nabla_\mathcal{L})$ be a first-order deformation of $(X,\Lambda_0,\nabla,L)$ over $k[\epsilon]$, where
\begin{center}
$\xi:\begin{CD}
(X,\Lambda_0)@>>> (\mathcal{X},\Lambda)\\
@VVV @VVV\\
Spec(k)@>>> Spec(k[\epsilon])
\end{CD}$
\end{center}
Let $\mathcal{U}=\{U_i\}$ be an affine open covering such that $(L,\nabla)$ is given by a system of transition functions $\{h_{ij}\}\in \mathcal{C}^1(\mathcal{U},\mathcal{O}_X^*)$ and $\{T_i^0\}\in \mathcal{C}^0(\mathcal{U},T_X)$, and $\kappa(\xi)\in \mathbb{H}^1(X,\Lambda_0,T_X)$ is given by a \v{C}ech $1$-cocycle $(\{p_{ij})\},\{-\Lambda_i\})\in \mathcal{C}^1(\mathcal{U},T_X)\oplus \mathcal{C}^0(\mathcal{U},\wedge^2 T_X)$ as in the proof of Proposition \ref{3q}. We keep the notations in the proof of Proposition \ref{3q} so that $Id+\epsilon p_{ij}:(\mathcal{O}_X(U_j)\otimes k[\epsilon],\Lambda_0+\epsilon \Lambda_j)\to (\mathcal{O}_X(U_i),\Lambda_0+\epsilon \Lambda_i)$ be a Poisson isomorphism defining $(\mathcal{X},\Lambda)$. 

Let the Poisson invertible sheaf $(\mathcal{L},\nabla_\mathcal{L})$ be represented by a system of transition functions $\{F_{ij}\}$, where $F_{ij}\in \Gamma(U_{ij},\mathcal{O}_{X\times Spec(k[\epsilon])}^*)$, and $\{Y_i\}$, where $Y_i\in \Gamma(U_i, T_X)\otimes k[\epsilon]$ which reduces to $\{h_{ij}\}$ and $\{T_i^0\} \mod\,\epsilon$. Therefore it can be represented on $U_{ij}\times Spec(k[\epsilon])$ as
\begin{align*}
F_{ij}=h_{ij}+\epsilon g_{ij},\,\,\,\,\, g_{ij}\in \Gamma(U_{ij}, \mathcal{O}_X)
\end{align*}
(here we see $F_{ij}$ be a function defined on $U_{ij}\times Spec(k[\epsilon])\subset U_i\times Spec(k[\epsilon])$) and on $U_i$ as
\begin{align*}
Y_i=T_i^0+\epsilon W_i,\,\,\,\,\, W_i\in \Gamma(U_i, T_X)
\end{align*}
We note that since $F_{ji}=h_{ji}+\epsilon g_{ji}$ which is considered to be a function on $U_{ij}\times Spec(k[\epsilon])\subset U_j\times Spec(k[\epsilon])$, $(Id+\epsilon p_{ij})(F_{ji})=F_{ij}^{-1}=\frac{h_{ij}-\epsilon g_{ij}}{h_{ij}^2}$ so that $h_{ji}+\epsilon g_{ji}+\epsilon p_{ij}(h_{ji})=h_{ji}-\epsilon \frac{g_{ij}}{h_{ij}^2}$. So $g_{ji}-\frac{1}{h_{ij}^2}p_{ij}(h_{ij})=-\frac{g_{ij}}{h_{ij}^2}$. Hence $g_{ij}=-h_{ij}^2 g_{ji}+[p_{ij},h_{ij}].$
Now we consider $(\frac{g_{ij}}{h_{ij}},p_{ij})$ to be on $U_{ij}\subset U_j$ (i.e. in $\mathcal{O}_X|_{U_j}\oplus T_X|_{U_j})$. Then we have $\{(\frac{g_{ij}}{h_{ij}},p_{ij})\}\in \mathcal{C}^1(\mathcal{U},\mathcal{E}_L^0)$. Indeed, $(\frac{g_{ij}}{h_{ij}},p_{ij})$ on $U_j$ is identified with $(\frac{g_{ij}}{h_{ij}}-[p_{ij},\log h_{ij}],p_{ij})=(\frac{-h_{ij}^2 g_{ji}+[p_{ij},h_{ij}]}{h_{ij}}-\frac{1}{h_{ij}}[p_{ij},h_{ij}],-p_{ji})=(-\frac{g_{ji}}{h_{ji}},-p_{ji})$ on $U_i$. We also consider $\{(W_i,-\Lambda_i)\}\in \mathcal{C}^0(\mathcal{U},\mathcal{E}_L^1)$. Then we claim that $(\{(W_i, -\Lambda_i)\},\{(\frac{g_{ij}}{h_{ij}},p_{ij})\})\in \mathcal{C}^0(\mathcal{U},\mathcal{E}_L^1)\oplus \mathcal{C}^1(\mathcal{U},\mathcal{E}_L^0)$ define a \v{C}ech $1$-cocycle in the following \v{C}ech resolution
\begin{center}
$\begin{CD}
\mathcal{C}^0(\mathcal{U},\mathcal{E}_L^3)\\
@AdAA\\ 
\mathcal{C}^0(\mathcal{U},\mathcal{E}_L^2)@>-\delta>>\mathcal{C}^1(\mathcal{U},\mathcal{E}_L^2)\\
@AdAA @AdAA\\
\mathcal{C}^0(\mathcal{U},\mathcal{E}_L^1)@>\delta>>\mathcal{C}^1(\mathcal{U},\mathcal{E}_L^1)@>-\delta>>\mathcal{C}^2(\mathcal{U},\mathcal{E}_L^1)\\
@AdAA @AdAA @AdAA\\
\mathcal{C}^0(\mathcal{U},\mathcal{E}_L^0)@>-\delta>>\mathcal{C}^1(\mathcal{U},\mathcal{E}_L^0)@>\delta>>\mathcal{C}^2(\mathcal{U},\mathcal{E}_L^0)@>-\delta>>\mathcal{C}^3(\mathcal{U},\mathcal{E}_L^0)
\end{CD}$
\end{center}

We have $F_{ij}(1+\epsilon p_{ij})(F_{jk})=F_{ij}$ on $U_i$, which induces (for the detail, \cite{Ser06} p.148)
\begin{align}\label{cocycle1}
\frac{g_{ij}}{h_{ij}}+\frac{g_{jk}}{h_{jk}}-\frac{g_{ik}}{h_{ik}}+\frac{p_{ij}h_{jk}}{h_{jk}}=\frac{g_{ij}}{h_{ij}}+\frac{g_{jk}}{h_{jk}}-\frac{g_{ik}}{h_{ik}}+[p_{ij},\log h_{jk}]=0
\end{align}

We note that $(\frac{g_{ij}}{h_{ij}},p_{ij})$ on $U_j$ is identified with $(\frac{g_{ij}}{h_{ij}}+\frac{p_{ij}h_{jk}}{h_{jk}},p_{ij})$ on $U_k$. (\ref{cocycle1}) means that $\delta(\{(\frac{g_{ij}}{h_{ij}},p_{ij})\})=0$. On the other hand, we have $[\Lambda_0+\epsilon \Lambda_i, T_i^0+\epsilon W_i]=0$. Then we have $[\Lambda_i, T_i^0]+[\Lambda_0,W_i]=0$. In other words, $[\Lambda_0, W_i]+(-1)^2[T_i^0, -\Lambda_i]=0$. Lastly we have, on $U_i$,
\begin{align*}
T_j^0+\epsilon W_j+\epsilon[p_{ij},T_j^0+\epsilon W_j]-T_i^0-\epsilon W_i+\frac{h_{ij}-\epsilon g_{ij}}{h_{ij}^2}[\Lambda_0+\epsilon \Lambda_i,h_{ij}+\epsilon g_{ij}]=0
\end{align*}
By considering the coefficient of $\epsilon$, we have
\begin{align*}
W_j-W_i+[p_{ij},T_j^0]+\frac{1}{h_{ij}}[\Lambda_0,g_{ij}]+\frac{1}{h_{ij}}[\Lambda_{i},h_{ij}]-\frac{g_{ij}}{h_{ij}^2}[\Lambda_0,h_{ij}]=0
\end{align*}
We note that $(W_i,- \Lambda_i)$ on $U_i$ is identified with $(W_i-\frac{1}{h_{ij}}[\Lambda_i,h_{ij}],-\Lambda_i)$ on $U_j$. Then we see that on $U_j$, $(W_j-W_i+\frac{1}{h_{ij}}[\Lambda_i,h_{ij}],-\Lambda_j+\Lambda_i)+([\Lambda_0,\frac{g_{ij}}{h_{ij}}]+(-1)^1[T_j^0,p_{ij}],[\Lambda_0,p_{ij}])=0$ since $[\Lambda_0,\frac{g_{ij}}{h_{ij}}]=\frac{1}{h_{ij}}[\Lambda_0,g_{ij}]-\frac{g_{ij}}{f_{ij}^2}[\Lambda_0,f_{ij}]$. So we have $\delta(\{(W_i,-\Lambda_i)\}+d((\{\frac{g_{ij}}{h_{ij}},p_{ij})\})=0$.  Hence $(\{(W_i, -\Lambda_i)\},\{(\frac{g_{ij}}{f_{ij}},p_{ij})\})$ defines an element in $\mathbb{H}^1(X,\Lambda_0,\mathcal{E}_L^\bullet)$. 

Assume that we have two equivalent first-order deformations $(\mathcal{X},\Lambda, \mathcal{L},\nabla_{\mathcal{L}})$ and $(\mathcal{X}',\Lambda',\mathcal{L}'\nabla_{\mathcal{L}'})$ of $(X,\Lambda_0,L,\nabla)$
, which are represented by $(\{(W_i,-\Lambda_i)\},\{( \frac{g_{ij}}{h_{ij}},p_{ij})\})$ and $(\{(W_i',-\Lambda_i')\},\{ (\frac{g_{ij}'}{h_{ij}},p_{ij}')\})$ respectively. Recall that there is an induced Poisson isomorphism $\alpha_i:(U_i\times Spec(k[\epsilon]),\Lambda_0+\epsilon \Lambda_i)\to (U_i\times Spec(k[\epsilon]), \Lambda_0+\epsilon \Lambda_i')$ which corresponds to $a_i\in \Gamma(U_i,T_X)$ so that $a_i-a_j=p_{ij}-p_{ij}'$ and $\Lambda_i'-\Lambda_i=[\Lambda_0,a_i]$ in the proof of Proposition \ref{3q}. Let us consider $Id+\epsilon a_i:(\mathcal{O}_X(U_i)\otimes k[\epsilon],\Lambda_0+\epsilon \Lambda_i')\to (\mathcal{O}_X(U_i)\otimes k[\epsilon],\Lambda_0+\epsilon \Lambda_i)$.
Then $T_i^0+\epsilon W_i=(Id+\epsilon a_i)(T_i^0+\epsilon W_i')=T_i^0+\epsilon [a_i,T_i^0]+\epsilon W_i'$ so that $W_i'-W_i=[T_i^0, a_i]$ so that $[\Lambda_0,0]+(-1)^1[T_i^0,a_i]=W_i-W_i'$. On the other hand, $(Id+\epsilon a_i)F_{ij}'=F_{ij}$, equivalently, $(Id+\epsilon a_i)(h_{ij}+\epsilon g_{ij}')=h_{ij}+\epsilon g_{ij}$ so that $\frac{g_{ij}}{h_{ij}}-\frac{g_{ij}'}{h_{ij}}=[a_i,\log h_{ij}]$. Since $(0,a_i)$ on $U_i$ is identified with $([a_i,\log h_{ij}],a_i)$ on $U_j$, $-\delta(\{(0,a_i)\})=\{( \frac{g_{ij}}{h_{ij}}-\frac{g_{ij}'}{h_{ij}},p_{ij}-p_{ij}')\}$. Hence $(\{(W_i, -\Lambda_i)\}$, $\{( \frac{g_{ij}}{h_{ij}},p_{ij})\})$ and $(\{(W_i', -\Lambda_i')\}$, $\{( \frac{g_{ij}'}{h_{ij}},p_{ij}')\})$ are cohomologous. This proves (1) in Proposition \ref{spo}.

Now consider the proof of $(2)$ in Proposition \ref{spo}. We keep the notations in the proof of Proposition \ref{lifting}. Consider a small extension $e:0\to (t)\to \tilde{A}\to A\to 0$ in $\bold{Art}$ and let $\eta=(\xi, \mathcal{L},\nabla_\mathcal{L})$ be an infinitesimal deformation of $(X,\Lambda_0,L,\nabla)$ over $A$, where
\begin{center}
$\xi: \begin{CD}
(X,\Lambda_0)@>>> (\mathcal{X},\Lambda)\\
@VVV @VVV\\
Spec(k)@>>> Spec(A)
\end{CD}$
\end{center}
Let $\mathcal{U}=\{U_i\}$ be an affine open cover of $X$ and $\theta_{i}:(U_i \times Spec(A),\Lambda_i)\to (\mathcal{X}|_{U_i},\Lambda|_{U_i})$ be a Poisson isomorphism so that $\theta_{ij}:=\theta_j^{-1}\theta_i:(U_{ij}\times Spec(A),\Lambda_i)\to (U_{ij}\times Spec(A),\Lambda_j)$ is an Poisson isomorphism. Let ${f}_{ij}:(\mathcal{O}_X(U_{ij})\otimes _k {A},\Lambda_j)\to (\mathcal{O}_X(U_{ij})\otimes_k A,\Lambda_i)$ corresponding to $\theta_{ij}$. We may assume that $(\mathcal{L},\nabla_\mathcal{L})$ is given by a system of transition functions $\{F_{ij}\}$, where $F_{ij}$ is a nowhere zero function on $U_{ij}\times Spec(A)$ which is restricted from $U_i\times Spec(A)$ so that $F_{ij}f_{ij}(F_{jk})=F_{ik}$ and $\{T_i\}$, where $T_i\in \Gamma(U_i,T_X)\otimes A$ with $[\Lambda_i,T_i]=0$ and $\tilde{f}_{ij}T_j-T_i+[\Lambda_i,\log F_{ij}]=0$. In order to see if a lifting $(\tilde{\xi},\tilde{\mathcal{L}},{\nabla}_{\tilde{\mathcal{L}}})$ of $(\xi,\mathcal{L},\nabla_\mathcal{L})$ to $Spec(\tilde{A})$ exists,  we choose arbitrarily a collection $\{\tilde{\theta}_{ij}\},\{\tilde{F}_{ij}\},\{\tilde{\Lambda}_i\}$ and $\{\tilde{T}_i\}$ where, for each $i,j,k$:
\begin{enumerate}
\item $\tilde{\theta}_{ij}$ is an automorphism of $U_{ij}\times Spec(\tilde{A})$ which restrict to $\theta_{ij}$ on $U_{ij}\times Spec(A)$, and corresponds to $\tilde{f}_{ij}:\mathcal{O}_X(U_{ij})\otimes \tilde{A}\to \mathcal{O}_X(U_{ij})\otimes \tilde{A}$ as in the proof of Proposition \ref{lifting}.
\item $\tilde{F}_{ij}$ is a nowhere zero function on $U_{ij}\times Spec(\tilde{A})\subset U_i \times Spec(\tilde{A})$ which restricts $F_{ij}$ on $U_{ij}\times Spec(A)\subset U_i\times Spec(A)$.
\item $\tilde{\Lambda}_i$ is a bivector field over $\tilde{A}$ on $U_{i}\times Spec(\tilde{A})$ which restricts $\Lambda_i$ on $U_{i}\times Spec(A)$ as in the proof of Proposition \ref{lifting} so that $\tilde{\Lambda}_i\in \Gamma(U_i,\wedge^2 T_X)\otimes_k \tilde{A}$.
\item $\tilde{T}_i$ is a vector field over $\tilde{A}$ on $U_{i}\times Spec(\tilde{A})$ which restricts to $T_i$ on $U_{i}\times Spec(A)$ so that $\tilde{T}_i\in\Gamma(U_i,T_X)\otimes \tilde{A}$.
\end{enumerate}
Recall that from the proof of Proposition \ref{lifting}, (1) implies that $\tilde{f}_{ij}\tilde{f}_{jk}\tilde{f}_{ik}^{-1}=Id+t\tilde{d}_{ijk}$, and (2) implies that $\tilde{F}_{ij}\tilde{f}_{ij}(\tilde{F}_{jk})\tilde{F}_{ik}^{-1}=1+tg_{ijk}$ for some $g_{ijk}\in \mathcal{O}_X(U_{ijk})$. By considering $(-2\tilde{g}_{ijk},-2\tilde{d}_{ijk})$ to be on $\mathcal{O}_X|_{U_k}\oplus T_X|_{U_k}$, we can show that $\{(-2\tilde{g}_{ijk},-2\tilde{d}_{ijk})\}\in \mathcal{C}^2(\mathcal{U},\mathcal{E}_L^0)$ and $-\delta(\{(-2\tilde{g}_{ijk},-2\tilde{d}_{ijk})\})=0$ (for the detail, see \cite{Ser06} p.149-150).
$(4)$ implies that $[\tilde{\Lambda}_i,\tilde{T}_i]=tS_i$ for some $S_i\in \Gamma(U_i, \wedge^2 T_X)$. Moreover $(2)$ and $(4)$ implies that $\tilde{f}_{ij}\tilde{T}_j-\tilde{T}_i+[\tilde{\Lambda}_i,\log \tilde{F}_{ij}]=0 \mod (t)$ so that $\tilde{f}_{ij}\tilde{T}_j-\tilde{T}_i+[\tilde{\Lambda}_i,\log \tilde{F}_{ij}]=tQ_{ij}$ for some $Q_{ij}\in \Gamma(U_{ij},T_X)$. By considering $(-2Q_{ij},2\Lambda_{ij}')$ to be on $T_X|_{U_j}\oplus \wedge^2 T_X|_{U_j}$, we can show that $\{(-2Q_{ij},2\Lambda_{ij}')\}\in \mathcal{C}^1(\mathcal{U},\mathcal{E}_L^1)$.  We claim that $\alpha:=\{(-2S_i,\Pi_i)\}\oplus \{(-2Q_{ij}, 2\Lambda_{ij}' )\}\oplus \{(-2g_{ijk},-2\tilde{d}_{ijk})\}\in \mathcal{C}^0(\mathcal{U},\mathcal{E}_L^2)\oplus \mathcal{C}^1(\mathcal{U},\mathcal{E}_L^1)\oplus \mathcal{C}^0(\mathcal{U},\mathcal{E}_L^0)$ is a $2$-cocyle in the above \v{C}ech resolution.

We show that $d(\{(-2S_i,\Pi_i)\})=\{([\Lambda_0, -2S_i]+(-1)^3[T_i^0,\Pi_i],[\Lambda_0,\Pi_i])\}=0.$ Indeed, since $t[\Lambda_0, S_i]=[\tilde{\Lambda}_i,[\tilde{\Lambda}_i,\tilde{T}_i]]=[[\tilde{\Lambda}_i,\tilde{\Lambda}_i], \tilde{T}_i]-[\tilde{\Lambda}_i,[\tilde{\Lambda}_i, \tilde{T}_i]]=[t\Pi_i,T_i^0]-t[\Lambda_0, S_i]$, we have $[\Lambda_0, 2S_i]+[-\Pi_i, T_i^0]=0$.

We show that $-\delta(\{-2S_i, \Pi_i)\})+d(\{(-2Q_{ij},2\Lambda_{ij}')\})=0$. We note that $(-2S_i,\Pi_i)$ on $U_i$ is identified with $(-2S_i+[\Pi_i,\log h_{ij}],\Pi_i)$ on $U_j$. Then on $U_j$, we have to show that $-2S_i+[\Pi_i,\log h_{ij}]-(-2S_j)+[\Lambda_0,-2Q_{ij}]+(-1)^2[T_j^0,2\Lambda_{ij}']=0$. Indeed, $2tS_i-2tS_j=2tS_i-2\tilde{f}_{ij}(tS_j)=2[\tilde{\Lambda}_i,\tilde{T}_i]-2\tilde{f}_{ij}[\tilde{\Lambda}_j,\tilde{T}_j]=2[\tilde{\Lambda}_i,\tilde{T}_i]-2[\tilde{f}_{ij}\tilde{\Lambda}_j,\tilde{f}_{ij}\tilde{T}_j]=2[\tilde{\Lambda}_i,\tilde{T}_i]-2[\tilde{\Lambda}_i+t\Lambda_{ij}',\tilde{T}_i-[\tilde{\Lambda}_i,\log\tilde{F}_{ij}]+tQ_{ij}]=-2t[\Lambda_0,Q_{ij}]+2[\tilde{\Lambda}_i,[\tilde{\Lambda}_i, \log \tilde{F}_{ij}]]-2t[\Lambda_{ij}', T_i^0-[\Lambda_0,\log h_{ij}]]=t[\Lambda_0, -2Q_{ij}]+[[\tilde{\Lambda}_i,\tilde{\Lambda}_i],\log \tilde{F}_{ij}]-t[2\Lambda_{ij}', T_j^0]=t([\Lambda_0,- 2Q_{ij}]+[\Pi_i,\log h_{ij}]+[T_j^0,2\Lambda_{ij}']).$

We show that $-\delta(\{(-2Q_{ij},2\Lambda_{ij}')\})+d(\{(-2g_{ijk},-2\tilde{d}_{ijk})\})=0$, equivalently $-\delta(\{(-Q_{ij},\Lambda_{ij}')\})+d(\{(-g_{ijk},-\tilde{d}_{ijk})\})=0$. We note that $(-g_{ijk},-\tilde{d}_{ijk})$ on $U_k$ is identified with $(-g_{ijk}-\tilde{d}_{ijk}\log h_{ki},-\tilde{d}_{ijk})$ on $U_i$. $(-Q_{ij},\Lambda_{ij}')$ on $U_j$ is identified with $(-Q_{ij}-[\Lambda_{ij}',\log h_{ij}],\Lambda_{ij}')$ on $U_i$. $(-Q_{jk}, \Lambda_{jk}')$ on $U_k$ is identified with $(-Q_{jk}-[\Lambda_{jk}', \log h_{ik}], \Lambda_{jk}')$ on $U_i$. We have to show that on $U_i$,
\begin{align*}
Q_{ij}+[\Lambda_{ij}',\log h_{ij}] +Q_{jk}+[\Lambda_{jk}',\log h_{ik}]+Q_{ki}-[\Lambda_0, g_{ijk}+\tilde{d}_{ijk}\log h_{ki}]+(-1)^1[T_i^0, -\tilde{d}_{ijk}]=0
\end{align*}

By Lemma \ref{3n}, on $U_i$, we have $t[T_i^0,\tilde{d}_{ijk}]=\tilde{T}_i-\tilde{f}_{ij}\tilde{f}_{jk}\tilde{f}_{ki}\tilde{T}_i=\tilde{T}_i-\tilde{f}_{ij}\tilde{f}_{jk}(\tilde{T}_k-[\tilde{\Lambda}_k,\log\tilde{F}_{ki}]+tQ_{ki})=\tilde{T}_i-\tilde{f}_{ij}(\tilde{T}_j-[\Lambda_j, \log \tilde{F}_{jk}]+tQ_{jk}-[\tilde{f}_{jk}\tilde{\Lambda}_k,\log \tilde{f}_{jk}\tilde{F}_{ki}]+tQ_{ki}]=\tilde{T}_i-\tilde{f}_{ij}(\tilde{T}_j-[\tilde{\Lambda}_j, \log \tilde{F}_{jk}]-[\tilde{\Lambda}_j+t\Lambda_{jk}',\log \tilde{f}_{jk}\tilde{F}_{ki}]+tQ_{ki}+tQ_{jk})=\tilde{T}_i-(\tilde{T}_i-[\tilde{\Lambda}_i,\log \tilde{F}_{ij}]-[\tilde{f}_{ij}\tilde{\Lambda}_j,\log \tilde{f}_{ij}\tilde{F}_{jk}\cdot \tilde{f}_{ij}\tilde{f}_{jk}\tilde{F}_{ki}]-t[\Lambda_{jk}',\log h_{ki}]+tQ_{ij}+tQ_{jk}+tQ_{ki})=[\tilde{\Lambda}_i,\log \tilde{F}_{ij}]+[\tilde{\Lambda}_i+t\Lambda_{ij}',\log \tilde{f}_{ij}\tilde{F}_{jk}\cdot \tilde{f}_{ij}\tilde{f}_{jk}\tilde{F}_{ki}]+t[\Lambda_{jk}',\log h_{ki}]-t(Q_{ij}+Q_{jk}+Q_{ki})=[\tilde{\Lambda}_i,\log \tilde{F}_{ij}\cdot\tilde{f}_{ij}\tilde{F}_{jk}\cdot\tilde{f}_{ij}\tilde{f}_{jk}\tilde{F}_{ki}]+t[\Lambda_{ij}',\log h_{jk}h_{ki}]+t[\Lambda_{jk}',\log h_{ki}]-t(Q_{ij}+Q_{jk}+Q_{ki})=[\tilde{\Lambda}_i,\log \tilde{F}_{ij}\cdot\tilde{f}_{ij}\tilde{F}_{jk}\cdot\tilde{f}_{ij}\tilde{f}_{jk}\tilde{f}_{ki}\tilde{F}_{ik}^{-1}]+t[\Lambda_{ij}',\log h_{ji}]+t[\Lambda_{jk}',\log h_{ki}]-t(Q_{ij}+Q_{jk}+Q_{ki})=[\tilde{\Lambda}_i,\log (\tilde{F}_{ij}\cdot \tilde{f}_{ij}\tilde{F}_{jk}\cdot \tilde{F}_{ik}^{-1}+\tilde{F}_{ij}\cdot\tilde{f}_{ij}\tilde{F}_{jk}\cdot td_{ijk} \tilde{F}_{ik}^{-1})]+t[\Lambda_{ij}',\log h_{ji}]+t[\Lambda_{jk}',\log h_{ki}]-t(Q_{ij}+Q_{jk}+Q_{ki})=[\tilde{\Lambda}_i,\log (1+tg_{ijk}+th_{ij}h_{jk}\tilde{d}_{ijk}h_{ki})]-t[\Lambda_{ij}',\log h_{ij}]-t[\Lambda_{jk}',\log h_{ik}]-t(Q_{ij}+Q_{jk}+Q_{ki})=(1-tg_{ijk}-th_{ik}\tilde{d}_{ijk}h_{ki})[\tilde{\Lambda}_i, 1+tg_{ijk}+th_{ik}\tilde{d}_{ijk} h_{ki}]-t[\Lambda_{ij}',\log h_{ij}]-t[\Lambda_{jk}',\log h_{ik}]-t(Q_{ij}+Q_{jk}+Q_{ki})=t[\Lambda_0,g_{ijk}+\tilde{d}_{ijk}\log h_{ki}]-t[\Lambda_{ij}',\log h_{ij}]-t[\Lambda_{jk}',\log h_{ik}]-t(Q_{ij}+Q_{jk}+Q_{ki})$.

We claim that given a different arbitrary choice $\{\tilde{\theta}_{ij}'\},\{\tilde{\Lambda}_i'\},\{\tilde{F}_{ij}'\}$ and $\{\tilde{T}_i'\}$ satisfying (1),(2),(3) and (4), the associated $2$-cocycle $\beta:=\{(-2S_i' ,\Pi_i')\}\oplus \{(-2Q_{ij} , 2\Lambda_{ij}'')\} \oplus \{( -2g_{ijk},-2\tilde{d}_{ijk}')\}$ is cohomologous to the $2$-cocycle $\alpha=\{(-2S_i,\Pi_i)\}\oplus \{(-2Q_{ij}, 2\Lambda_{ij}' )\}\oplus \{(-2g_{ijk},-2\tilde{d}_{ijk})\}$ associated with $\{\tilde{\theta_{ij}}\},\{\tilde{\Lambda}_i\},\{\tilde{F}_{ij}\}$ and $\{\tilde{T}_i\}$. We note that $\tilde{T}_i'=\tilde{T}+tT_i'$  for some $T_i'\in \Gamma(U_i,T_X)$ and $\tilde{F}_{ij}'=\tilde{F}_{ij}+tF_{ij}'$ for some $F_{ij}'\in \Gamma(U_{ij},\mathcal{O}_X)$. Recall the notations in the proof of Proposition \ref{lifting}. $tS_i'=[\tilde{\Lambda}_i',\tilde{T}_i']=[\tilde{\Lambda}_i+t\Lambda_i',\tilde{T}_i+tT_i']=tS_i+[\tilde{\Lambda}_i,tT_i']+t[\Lambda_i',\tilde{T}_i]=tS_i+t[\Lambda_0,T_i']+t[\Lambda_i',T_i^0]$ so that $-2S_i+2S_i'=[\Lambda_0,2T_i']+(-1)^2[T_i^0,-2\Lambda_i']$.

$1+tg_{ijk}'=\tilde{F}_{ij}'\tilde{f}_{ij}'(\tilde{F}_{jk}')\tilde{F'}_{ik}^{-1}=(\tilde{F}_{ij}+tF_{ij}')(\tilde{f}_{ij}+tp_{ij}')(\tilde{F}_{jk}+tF_{jk}')(\tilde{F}_{ik}+tF_{ik}')^{-1}=(\tilde{F}_{ij}+tF_{ij}')(\tilde{f}_{ij}(\tilde{F}_{jk})+t[p_{ij}',h_{jk}]+tF_{jk}')(\tilde{F}_{ik}^{-1}-tF_{ik}'h_{ik}^{-2})=\tilde{F}_{ij}\tilde{f}_{ij}(\tilde{F}_{jk})\tilde{F}_{ik}^{-1}+th_{ij}[p_{ij}',h_{jk}]h_{ik}^{-1}-th_{ij}h_{jk}F_{ik}'h_{ik}^{-2}+th_{ij}F_{jk}'h_{ik}^{-1}+tF_{ij}'h_{jk}h_{ik}^{-1}=1+tg_{ijk}+t\frac{1}{h_{jk}}[p_{ij}',h_{jk}]-th_{ki}F_{ik}'+th_{kj}F_{jk}'+th_{ji}F_{ij}'=1+tg_{ijk}+t\frac{1}{h_{jk}}[p_{ij}',h_{jk}]-t\frac{F_{ik}'}{h_{ik}}+t\frac{F_{jk}'}{h_{jk}}+t\frac{F_{ij}'}{h_{ij}}$ so that $-g_{ijk}-(-g_{ijk}')=\frac{1}{h_{jk}}[p_{ij}',h_{jk}]-\frac{F_{ik}'}{h_{ik}}+\frac{F_{jk}'}{h_{jk}}+\frac{F_{ij}'}{h_{ij}}$. Note that $(\frac{{F}_{ij}'}{h_{ij}},p_{ij}')$ on $U_j$ is identified with $\frac{F_{ij}'}{h_{ij}}+[p_{ij}',\log h_{jk}]$ on $U_k$.

$tQ_{ij}'=\tilde{f}_{ij}'\tilde{T}_j'-\tilde{T}_i'+[\tilde{\Lambda}_i',\log \tilde{F}_{ij}']=(\tilde{f}_{ij}+tp_{ij}')(\tilde{T}_j+tT_j')-\tilde{T}_i-tT_i'+[\tilde{\Lambda}_i+t\Lambda_i',\log (\tilde{F}_{ij}+tF_{ij}')]=\tilde{f}_{ij}\tilde{T}_j+t[p_{ij}',T_j^0]+tT_j'-\tilde{T}_i-tT_i'+\frac{\tilde{F}_{ij}-tF_{ij}'}{\tilde{F}_{ij}^2}[\tilde{\Lambda}_i+t\Lambda_i',\tilde{F}_{ij}+tF_{ij}']=tQ_{ij}+t[p_{ij}',T_j^0]+tT_j'-tT_i'-t\frac{F_{ij}'}{h_{ij}^2}[\Lambda_0,h_{ij}]+t\frac{1}{h_{ij}}[\Lambda_0,F_{ij}']+t\frac{1}{h_{ij}}[\Lambda_i',h_{ij}]$ so that $-Q_{ij}-(-Q_{ij}')=T_j'-(T_i'+\frac{1}{h_{ij}}[-\Lambda_i',h_{ij}])+[\Lambda_0,\frac{F_{ij}'}{h_{ij}}]$. Here we note that $(T_i', -\Lambda_i')$ on $U_i$ is identified with $(T_i'+\frac{1}{h_{ij}}[-\Lambda_i',h_{ij}],-\Lambda_{i}')$ on $U_j$. 

Hence $(\{2T_i'\},\{-2\Lambda_i'\})\oplus(\{2\frac{F_{ij}'}{h_{ij}}\},\{2p_{ij}'\})\in \mathcal{C}^0(\mathcal{U},\mathcal{E}_L^1)\oplus \mathcal{C}^1(\mathcal{U},\mathcal{E}_L^0)$ in the above \v{C}ech resolution is mapped to $\alpha-\beta$ so that $\alpha$ is cohomologous to $\beta$. So given a small extension $e:0\to (t)\to \tilde{A}\to A\to 0$ and an infinitesimal deformations $\eta$ of $(X,\Lambda_0,L,\nabla)$ over $A$, we can associate an element $o_\eta(e):=$ the cohomology class of $\alpha\in \mathbb{H}^2(X,\Lambda_0,\mathcal{E}_L^\bullet)$. We note that $o_\eta(e)=0$ if and only if there exists a collection of $\{\tilde{f}_{ij}\}, \{\tilde{\Lambda}_i\}, \{\tilde{F}_{ij}\},\{\tilde{Y_i}\}$ defining an infinitesimal deformation over $\tilde{A}$ which induces $\eta$.

Let's consider the proof of $(3)$ in Proposition \ref{spo}. For given $c(L,\nabla)=(\{T_i^0\}, \{d \log h_{ij}\})\in \mathbb{H}^1(\Omega_X^1\xrightarrow{i_{\Lambda_0}} T_X)$, we define in the \v{C}ech resolutions of $T_X^\bullet$ and $\mathcal{O}_X^\bullet$,
\begin{align*}
c(L,\nabla):\mathbb{H}^1(X,\Lambda_0,T_X^\bullet)&\to \mathbb{H}^2(X,\Lambda_0,\mathcal{O}_X^\bullet),k(\xi)\mapsto c(L,\nabla)(k(\xi))\\
(\{-\Lambda_i\}, \{p_{ij}\})&\mapsto (\{[-\Lambda_i, T_i^0]\},\{[-\Lambda_i,\log h_{ij}]+[T_j^0,p_{ij}]\},\{-\frac{p_{ij}h_{jk}}{h_{jk}}=-[p_{ij},\log h_{jk}]\})\\
\end{align*}
We can check this is well-defined. 
$c(L,\nabla)(k(\xi))=0$ means that there exists $(\{W_i\},\{\frac{g_{ij}}{h_{ij}}\})\in \mathcal{C}^0(\mathcal{U},T_X)\oplus \mathcal{C}^1(\mathcal{U},\mathcal{O}_X)$ such that 
\begin{center}
$\begin{CD}
[\Lambda_0,W_i]=[-\Lambda_i, T_i^0]\\
@A[\Lambda_0,-]AA\\
W_i @>\delta>> W_j-W_i+[\Lambda_0,\frac{g_{ij}}{h_{ij}}]=[-\Lambda_\alpha, \log h_{ij}]+[T_j^0,p_{ij}]\\
@. @A[\Lambda_0,-]AA\\
@. \frac{g_{ij}}{h_{ij}}@>\delta>> \frac{g_{ij}}{h_{ij}}+\frac{g_{jk}}{h_{\beta\gamma}}-\frac{g_{\alpha\gamma}}{h_{\alpha\gamma}}=-\frac{p_{ij} h_{jk}}{h_{jk}}
\end{CD}$
\end{center}
Hence $(L,\nabla)$ deforms along a first-order deformation $\xi$ over $k[\epsilon]$.

\end{proof}
\begin{remark}\label{remark11}
We keep the notations in the proof of Proposition $\ref{spo}$. When $\Lambda_0$ is symplectic, $\mathbb{H}^1(X,\Omega_X^1\xrightarrow{i_{\Lambda_0}} T_X)\cong \mathbb{H}^1(X,\Omega_X^1\xrightarrow{Id}\Omega_X^1)=0$ so that the Poisson Chern class of a Poisson invertible sheaf $(L,\nabla)$ is trivial. Hence $(L,\nabla)$ deforms along whole $\mathbb{H}^1(X,\Lambda_0,T_X^\bullet)\cong \mathbb{H}^1(X,\Omega_X^\bullet)$, where $\Omega_X^\bullet:\Omega_X^1\xrightarrow{\partial}\wedge^2 \Omega_X^1\xrightarrow{\partial}\cdots$. When $\Lambda_0$ is trivial, $\{T_i^0\}$ is a global Poisson vector field $T$, and $\{\Lambda_i\}$ is a global bivector field $\Lambda$ so that we have
\begin{align*}
\mathbb{H}^1(X,\Lambda_0,T_X^\bullet)&\xrightarrow{\cdot c(L,\nabla)} \mathbb{H}^2(X,\Lambda_0,\mathcal{O}_X^\bullet)\cong H^0(X,\wedge^2 T_X)\oplus H^1(X,T_X)\oplus H^2(X,\mathcal{O}_X)\\
(-\Lambda, \{p_{ij}\})&\mapsto ([-\Lambda,T],\{[-\Lambda, \log h_{ij}]+[T,p_{ij}]\},\{-\frac{p_{ij}h_{jk}}{h_{jk}}\})
\end{align*}

\end{remark}

\begin{remark}
Let $(X,\Lambda_0)$ be a compact K\"ahler holomorphic Poisson manifold and $(L,\nabla)$ be a Poisson invertible sheaf on $(X,\Lambda_0)$. In this remark, we describe the map $\mathbb{H}^1(X,\Lambda_0,T_X^\bullet)\xrightarrow{c(L,\nabla)} \mathbb{H}^2(X,\Lambda_0,\mathcal{O}_X^\bullet)$ in terms of Deaulbault resolution. Let the Poisson Chern class of $(L,\nabla)$ be represented by $(\{T_i\},\{d\log f_{ij}\})$ for an open covering $\mathcal{U}=\{U_i\}$ of $X$. Given a $1$-cocycle $(\{-\Lambda_i\},\{p_{ij}\})\in \mathcal{C}^0(\mathcal{U},\wedge^2 T_X)\oplus \mathcal{C}^1(\mathcal{U},T_X)$ in $\mathbb{H}^1(X,\Lambda_0,T_X^\bullet)$, we have
\begin{align*}
c(L,\nabla):\mathbb{H}^1(X,\Lambda_0,T_X^\bullet)&\to \mathbb{H}^2(X,\Lambda_0,\mathcal{O}_X^\bullet)\\
(\{-\Lambda_i\}, \{p_{ij}\})&\mapsto (\{[-\Lambda_i, T_i]\},\{[-\Lambda_i,\log f_{ij}]+[T_j,p_{ij}]\},\{-\frac{p_{ij}f_{jk}}{f_{jk}}=-[p_{ij},\log f_{jk}]\})\\
\end{align*}
There exists $\{e_i\}\in \mathcal{C}^0(\mathcal{U},\mathscr{A}^{0,0}(T_X))$ such that $-\delta\{ e_i\}=\{e_i-e_j\}=\{p_{ij}\}$. Then $(\{-\Lambda_i\},\{p_{ij}\})$ corresponds to $(-\Lambda:=\{[\Lambda_0,e_i]+\Lambda_i\}, \theta:=\bar{\partial}\{e_i\}) \in A^{0,0}(X,\wedge^2 T_X)\oplus A^{0,1}(X,T_X)$. Recall that $(\{d \log f_{ij}\},\{ T_i\})$ corresponds to $(\omega:=\{-\bar{\partial}\partial \log a_i\},T=\{-[\Lambda_0,\log a_i]-T_i\})\in A^{0,1}(X,\Omega_X^1)\oplus A^{0,0}(X,T_X)$ by Remark $\ref{remark12}$. We connect \v{C}ech reolsution with Deaulbault resolution in the following.

{\tiny
\[
\xymatrixrowsep{0.2in}
\xymatrixcolsep{0.05in}
\xymatrix{
&& & H^0(X,\wedge^2 T_X) \ar[ld]  \ar[rr]  &  & C^0(\wedge^2 T_X) \ar[ld]\\
&&A^{0,0}(X,\wedge^2 T_X)  \ar[rr] & & C^0(\mathscr{A}^{0,0}(\wedge^2 T_X))\\
&&& H^0(X,T_X) \ar@{.>}[uu] \ar@{.>}[ld] \ar@{.>}[rr] & & C^0(T_X) \ar[uu] \ar[ld] \ar[rr]^{\delta} && C^1( T_X) \ar[ld]\\
&& A^{0,0}(X,  T_X)  \ar[uu]^{[\Lambda_0,-]} \ar[ld]^{\bar{\partial}} \ar[rr] && C^0(\mathscr{A}^{0,0}( T_X)) \ar[uu]^{[\Lambda_0,-]} \ar[ld]^{\bar{\partial}} \ar[rr]^{\delta} && C^1(\mathscr{A}^{0,0}(T_X))   \\
&A^{0,1}(X, T_X) \ar[rr]  && C^0(\mathscr{A}^{0,1}(T_X)) && C^0(\mathcal{O}_X) \ar@{.>}[ld]\ar@{.>}[uu] \ar@{.>}[rr]^{-\delta} & & C^1(\mathcal{O}_X) \ar[uu] \ar[ld] \ar[rr]^{\delta}& &C^2(\mathcal{O}_X) \ar[ld]  \\
& & A^{0,0}(X,\mathcal{O}_X) \ar@{.>}[uu] \ar@{.>}[rr]  \ar@{.>}[ld]^{\bar{\partial}} && C^0(\mathscr{A}^{0,0}) \ar[uu]^{[\Lambda_0,-]} \ar[rr]^{-\delta} \ar[ld]^{\bar{\partial}} && C^1(\mathscr{A}^{0,0}) \ar[ld]^{\bar{\partial}} \ar[uu]^{[\Lambda_0,-]} \ar[rr]^{\delta} && C^2(\mathscr{A}^{0,0})\\
& A^{0,1}(X,\mathcal{O}_X) \ar[uu]^{[\Lambda_0,-]} \ar[rr] \ar[ld]^{\bar{\partial}}& & C^0(\mathscr{A}^{0,1}) \ar[uu]^{[\Lambda_0,-]} \ar[rr]^{-\delta} \ar[ld]^{\bar{\partial}} &&  C^1(\mathscr{A}^{0,1})\\
A^{0,2}(X,\mathcal{O}_X) \ar[rr]& & C^0(\mathscr{A}^{0,2})
}\]}

On the first floor, we note that $\delta(\{ [e_i, \log f_{ij}]\})=[e_i,\log f_{ij}]-[e_i,\log f_{ik}]+[e_j, \log f_{jk}]=[e_i,\log f_{kj}]+[e_j,\log f_{jk}]=-([e_i-e_j, \log f_{jk}])=\{-\frac{p_{ij} f_{jk}}{f_{jk}}\}$ and $\bar{\partial}\{[e_i,\log f_{ij}]\}=[\bar{\partial} e_i, \log f_{ij}]=[\theta, \log f_{ij}]=\theta (d ( \log f_{ij}))=\theta(\partial \log f_{ij})=\theta (\partial \log a_j)-\theta (\partial \log a_i)=-\delta (\{\theta(-\partial\log  a_i)\})$. Then $\bar{\partial}(\{\theta (-\partial\log  a_i)\})=\bar{\partial}[\theta,-\log a_i]=-[\theta,-\bar{\partial} \log a_i]=-\theta(-\partial\bar{\partial}\log a_i)=\theta(\omega)$. Hence $\{-\frac{p_{ij}f_{jk}}{f_{jk}}\}\in \mathcal{C}^2(\mathcal{U},\mathcal{O}_X)$ corresponds to $\theta(\omega)\in A^{0,2}(X,\mathcal{O}_X)$ by Appendix \ref{appendixA}.

On the second floor, we note that $[\Lambda_0,[e_i,\log f_{ij}]]-[-\Lambda_i,\log f_{ij}]-[T_j,p_{ij}]=[[\Lambda_0,e_i],\log f_{ij}]+[e_i,[\Lambda_0,\log f_{ij}]]+[\Lambda_i,\log f_{ij}]-[T_j,e_i-e_j]=[[\Lambda_0,e_i],\log f_{ij}]+[e_i,T_i-T_j]+[\Lambda_i,\log f_{ij}]-[T_j,e_i-e_j]=[-\Lambda,\log f_{ij}]-[T_i,e_i]+[T_j,e_j]=[-\Lambda, \log a_j]-[-\Lambda,\log a_i]-[T_i,e_i]+[T_j,e_j]=\delta([-\Lambda,\log a_i]+[T_i,e_i])$. Then $[\Lambda_0,[\theta,-\log a_i]]-\bar{\partial}([-\Lambda,\log a_i])-\bar{\partial}([T_i,e_i])=[[\Lambda_0,\theta], -\log a_i]-[\theta,[\Lambda_0,-\log a_i]]+[\bar{\partial}\Lambda,\log a_i]-[\Lambda,\bar{\partial} \log a_i]-[T_i,\theta]=-[\Lambda,\bar{\partial}\log a_i]-[ \theta,T]=-i_{\Lambda}(\partial\bar{\partial}\log a_i)-[\theta,T]=-i_{\Lambda}(\omega)-[\theta,T]$. Hence $\{[-\Lambda_i,\log f_{ij}]+[T_j,p_{ij}]\}$ corresponds to $-i_{\Lambda}\omega-[\theta,T]\in A^{0,1}(X,T_X)$ by Appendix \ref{appendixA}.

On the third floor, we note that $[\Lambda_0,[-\Lambda,\log a_i]]+[\Lambda_0,[T_i,e_i]]+[-\Lambda_i,T_i]=[\Lambda,[\Lambda_0,\log a_i]]+[T_i,[\Lambda_0,e_i]]+[T_i,\Lambda_i]=[\Lambda, [\Lambda_0, \log a_i]]+[T_i,-\Lambda]=[\Lambda, -T]$. Hence $[-\Lambda_i,T_i]$ corresponds to $[\Lambda,T]\in A^{0,0}(X,\wedge^2 T_X)$ by Appendix \ref{appendixA}.

Hence for given $c(L,\nabla)=(\omega, T)$, we have the following map
\begin{align}\label{deault}
c(L,\nabla):\mathbb{H}^1(X,\Lambda_0,T_X^\bullet)\to \mathbb{H}^2(X,\Lambda_0,\mathcal{O}_X^\bullet),
(-\Lambda, \theta)\mapsto ([\Lambda, T],- i_{\Lambda}(\omega)+[T,\theta],\theta(\omega))
\end{align}

\end{remark}
\begin{example}
Let $(X,\Lambda_0)$ be a complex $K3$ surface with trivial Poisson structure $\Lambda_0=0$. Then $\mathbb{H}^1(X,\Lambda_0,T_X^\bullet)=H^1(X,T_X)\oplus H^0(X,\wedge^2 T_X)$ and $\mathbb{H}^2(X,\Lambda_0,\mathcal{O}_X^\bullet)=H^2(X,\mathcal{O}_X)\oplus H^1(X,T_X)\oplus H^2(X,\mathcal{O}_X)$. In this case, any global vector field define a Poisson $\mathcal{O}_X$-module structure on an invertible sheaf $L$ on $(X,0)$. However, there is no nonzero global vector field on $X$. Hence there is only trivial Poisson $\mathcal{O}_X$-module structure on $L$ of $(X,0)$. We denote this Poisson invertible sheaf by $(L,0)$. In this case,  from $(\ref{deault})$, we have a map $(-\Lambda,\theta)\mapsto ([\Lambda,0],-i_{\Lambda}(\omega)+[0,\theta],\theta(\omega))=(0,-i_{\Lambda}(\omega),\theta(\omega))$. When $\Lambda=0$, we get $(0,0,\theta(\omega))$ so that $(L,0)$ deforms along whole $H^1(X,T_X)=20$ when $c(L)$ is trivial and along $19$ dimensional subspace of $H^1(X,T_X)$ when $c(L)$ is nontrivial since $H^1(X,T_X)\to H^2(X,\mathcal{O}_X)$ is surjective. When $\Lambda\ne 0$, in other words, $\Lambda$ is symplectic, in this case, we get $(0,-i_{\Lambda}(\omega),\theta(\omega))$. Since $i_\Lambda:\Omega_X^1\to T_X$ is an isomorphism when $\Lambda\ne 0$, in order that the cohomology class of $-i_{\Lambda}(\omega)=0$, the Chern class $\omega$ of the invertible sheaf $L$ should be trivial.

We conclude that when $L$ has a trivial Chern class, then $(L,0)$ deforms along whole $\mathbb{H}^1(X,\Lambda_0,T_X^\bullet)$. When $L$ has a nontrivial Chern class and $\Lambda\ne 0$, there is no first order deformation  of $(L,0)$ along $(-\Lambda, \theta)$. Hence $(L,0)$ deforms along $19$ dimensional  subspace of $H^1(X,T_X)$ when $c(L)$ is nontrivial.
\end{example}

\begin{example}
Let $X$ be a complex abelian variety induced from a $n$-dimensional vector space $V$ over $\mathbb{C}$ by a lattice and $L$ be an ample invertible sheaf on $X$. We consider a trivial Poisson structure $\Lambda_0=0$ on $X$. Then any global vector field $T=\sum_{i=1}^n c_i\frac{\partial}{\partial z_i}$, $c_i\in \mathbb{C}$ define a Poisson $\mathcal{O}_X$-module structure on $L$. We denote the Poisson invertible sheaf by $(L,T)$. Let $\sum_{p,q=1}^n a_{pq}dz_p\wedge d\bar{z}_q, a_{pq}\in \mathbb{C}$ be the associated $(1,1)$-form of $L$ so that $(a_{pq})$ is positive definite. Then $\mathbb{H}^1(X,\Lambda_0,T_X^\bullet)\xrightarrow{c(L,\nabla)} \mathbb{H}^2(X,\Lambda_0,\mathcal{O}_X^\bullet)$ is described in the following way by Remark $\ref{remark11}$ and $(\ref{deault})$,
\begin{align*}
&H^0(X,\wedge^2 T_X)\oplus H^1(X,T_X)\to H^0(X,\wedge^2 T_X)\oplus H^1(X,T_X)\oplus H^2(X,\mathcal{O}_X)\\
(-\Lambda=&\sum_{i,j=1}^n -x_{ij}\frac{\partial}{\partial z_i}\wedge\frac{\partial}{\partial z_j}, \sum_{i,j=1}^nb_{ij}\frac{\partial}{\partial z_i}\wedge d\bar{z}_j)
\\&\mapsto ( [\Lambda,T], \sum_{i,p,q=1}^n 2x_{ip}a_{pq}\frac{\partial}{\partial z_i}\wedge d\bar{z}_q+[T,\sum_{i,j=1}^n b_{ij}\frac{\partial}{\partial z_i}]\wedge d\bar{z}_j,\sum_{q,i,j=1}^n -a_{qi}b_{ij}d\bar{z}_q\wedge d\bar{z}_j)\\
&=(0,\sum_{i,p,q=1}^n 2x_{ip}a_{pq}\frac{\partial}{\partial z_i}\wedge d\bar{z}_q,\sum_{q,i,j=1}^n -a_{qi}b_{ij}d\bar{z}_q\wedge d\bar{z}_j)
\end{align*}
where $x_{ij}, b_{ij}\in \mathbb{C}$ and $x_{ij}=-x_{ji}$. In order for the second component to be $0$, $\Lambda$ should be $0$ since $(a_{ij})$ is positive definite. Hence $(L,T)$ deforms only in the subspace of $H^1(X,T_X)$ which is $n^2-\binom{n}{2}$ $($see \cite{Ser06}  p.151$)$
\end{example}

\begin{remark}[compare \cite{hor76}]
We can rephrase simultaneous deformations of $(X,\Lambda_0,L,\nabla)$ in the holomorphic setting in the following way. Let $(X,\Lambda_0)$ be a compact holomorphic Poisson manifold. By a family of deformations of Poisson invertible sheaves, we mean a pair of Poisson analytic family $(\mathcal{X},\Lambda, M)$ with $p:\mathcal{X}\to M$ and a Poisson invertible sheaf $(\mathcal{L},\nabla_{\mathcal{L}})$ on $\mathcal{X}$ over $M$. Let $\{U_i\}$ be a finite open covering of $X$, where $z=(z_1,...,z_n)$ is a coordinate of $U_i$ such that $\mathcal{X}$ is locally covered by a finite number of coordinate systems $U_i\times M$ with coordinate change $z_j=f_{jk}(z_k,t)$ and Poisson structures $\Lambda_i(t):=\sum_{p,q=1}^n g_{pq}^i(z_i,t)\frac{\partial}{\partial z_i^p}\wedge \frac{\partial}{\partial z_i^q}$ on $U_i\times M$ with $g_{pq}^i(z_i,t)=-g_{qp}^i(z_i,t)$ $($see \cite{Kim15}$)$ and $(\mathcal{L},\nabla_{\mathcal{L}})$ is represented by transition functions $\{\Psi_{ij}(z_j,t)\}$ and Poisson vector fields $\{ T_i(t):=\sum_{l=1}^n T_i^l(z_i,t)\frac{\partial}{\partial z_i^l}\}$. Then we have 
\begin{align}
&\Psi_{ij}(f_{jk}(z_k,t),t)\cdot \Psi_{jk}(z_k,t)=\Psi_{ik}(z_k,t), \label{ij1} \\
&[\sum_{p,q=1}^ng_{pq}^i \frac{\partial}{\partial z_i^p}\wedge \frac{\partial}{\partial z_i^q},\sum_{l=1}^n T_i^l(z_i,t)\frac{\partial}{\partial z_i^l}]=0 \label{ij2}\\
& \sum_{p=1}^n T_k^p(z_k,t)\frac{\partial}{\partial z_k^p} -\sum_{q=1}^n T_j^q(z_j,t)\frac{\partial}{\partial z_j^q}+[\sum_{p,q=1}^n g_{pq}^j(z_j,t)\frac{\partial}{\partial z_j^p}\wedge \frac{\partial}{\partial z_j^q}, \log \Psi_{jk}(z_k,t)]\label{ij3}\\
&=\sum_{p,q=1}^n T_k^p(z_k,t)\frac{\partial f_{jk}^q}{\partial z_k^p}\frac{\partial}{\partial z_j^q} -\sum_{q=1}^n T_j^q(f_{jk}(z_k,t),t)\frac{\partial}{\partial z_j^q}+\sum_{p,q=1}^n 2\frac{g_{pq}^j(z_j,t)}{\Psi_{jk}}\frac{\partial\Psi_{jk}}{\partial z_j^p}\frac{\partial}{\partial z_j^q}=0.\notag
\end{align}

By considering the coefficient of $\frac{\partial}{\partial z_j^q}$ in $(\ref{ij3})$, we get 
\begin{align*}
\sum_ {p=1}^nT_k^p(z_k,t)\frac{\partial f_{jk}^q}{\partial z_k^p}-T_j^q(f_{jk}(z_k,t),t)+\sum_{p=1}^n 2\frac{g_{pq}^j(z_j,t)}{\Psi_{jk}}\frac{\partial\Psi_{jk}}{\partial z_j^p}=0
\end{align*}
By taking the derivative with respect to $t$, we get
\begin{align}\label{ij4}
\sum_{p=1}^n \frac{\partial T_k^p}{\partial t}\frac{\partial f_{jk}^q}{\partial z_k^p}+&\sum_{p=1}^n T_k^p\frac{\partial}{\partial z_k^p}\left( \frac{\partial f_{jk}^q}{\partial t}\right)-\sum_{p=1}^n \frac{\partial T_j^q}{\partial z_j^p}\frac{\partial f_{jk}^p}{\partial t}-\frac{\partial T_j^q}{\partial t}\\&+\sum_{p=1}^n 2\frac{1}{\Psi_{jk}^2}(\frac{\partial g_{pq}^j}{\partial t}\Psi_{jk}-g_{pq}^j\frac{\partial \Psi_{jk}}{\partial t})\frac{\partial \Psi_{jk}}{\partial z_j^p}
+\sum_{p=1}^n 2\frac{g_{pq}^j}{\Psi_{jk}}\frac{\partial}{\partial z_j^p}\left( \frac{\partial \Psi_{jk}}{\partial t} \right)=0\notag
\end{align}
We claim that $(\{ T'_j=\sum_{p=1}^n \frac{\partial T_j^p(z_j,t)}{\partial t}\frac{\partial}{\partial z_j^p},-\Lambda_j'=\sum_{p,q=1}^n-\frac{\partial g_{pq}^j(z_j,t)}{\partial t}\frac{\partial}{\partial z_j^p}\wedge\frac{\partial}{\partial z_j^q}\},\{\Psi'_{jk}=\frac{1}{\Psi_{jk}}\frac{\partial \Psi_{jk}}{\partial t},-\theta_{jk}=-\sum_{p=1}^n\frac{\partial{f}_{jk}^p(z_k,t)}{\partial t}\frac{\partial}{\partial z_j^p}\}) \in \mathcal{C}^0(\mathcal{U},\mathcal{E}_L^1)\oplus \mathcal{C}^1(\mathcal{U},\mathcal{E}_L^0)$ define a $1$-cocycle. By taking the derivative of $(\ref{ij1})$, we can show that $\Psi_{jk}'-\Psi_{ik}'+\Psi_{ij}'=-[\theta_{jk},\log \Psi_{ij}]$ so that $\delta(\{\Psi_{ij}'\})=0$ $($see \cite{hor76}$)$. By taking the derivative of $(\ref{ij2})$,  we have $[\Lambda_j',T_j]+[\Lambda, T_j']=0$ so that $[\Lambda,T_j']+(-1)^2[T_j,-\Lambda_j']=0$.
We note that $(T_k',-\Lambda_k')$ on $U_k$ is translated to $(T_k'+[\Lambda_k',\log \Psi_{jk}],-\Lambda_k')$ on $U_j$. So it remains to show that $ T_k'+[\Lambda_k',\log \Psi_{jk}]-T_j'+[\Lambda, \Psi_{jk}']+(-1)^1[T_j, -\theta_{jk}]=0$, equivalently,
\begin{align*}
\sum_{p=1}^n \frac{\partial T_k^p}{\partial t}\frac{\partial}{\partial z_k^p}+&[\sum_{p,q=1}^n \frac{\partial g_{pq}^k}{\partial t}\frac{\partial}{\partial z_k^p}\wedge \frac{\partial }{\partial z_k^q},\log \Psi_{jk}]-\sum_{q=1}^n \frac{\partial T_j^q}{\partial t}\frac{\partial}{\partial z_j^q}\\
&+[\sum_{p,q=1}^n g_{pq}^j\frac{\partial}{\partial z_j^p}\wedge \frac{\partial}{\partial z_j^q},\frac{1}{\Psi_{jk}}\frac{\partial \Psi_{jk}}{\partial t}]-[\sum_{p=1}^n\frac{\partial f_{jk}^p}{\partial t}\frac{\partial}{\partial z_j^p},\sum_{q=1}^n T_j^q\frac{\partial}{\partial z_j^q}]=0
\end{align*}
which follows from $(\ref{ij4})$.
Hence we have a characteristic map $\rho:T_t(B)\to \mathbb{H}^1(X_t,\Lambda_t,\mathcal{E}_L^\bullet)$.
\end{remark}

\section{Deformations of sections of a Poisson invertible sheaf $(L,\nabla)$ in flat Poisson deformations}\label{section7}

The formalism of deformations of section of an invertible sheaf in flat deformations presented in \cite{Ser06} (see p.152-153) can be extended to Poisson deformations.  Let $(X,\Lambda_0)$ be a nonsingular projective Poisson variety and $(L,\nabla)$ be a Poisson invertible sheaf on $(X,\Lambda_0)$. Let $(L,\nabla)$ be given by transition functions $\{f_{ij}\}\in \mathcal{C}^1(\mathcal{U},\mathcal{O}_X^*)$ and Poisson vector fields $\{T_i\}\in \mathcal{C}^0(\mathcal{U},T_X)$ for an affine open covering $\mathcal{U}=\{U_i\}$ of $X$. We define a homomorphism of complex of sheaves in the following way (see Remark \ref{lco})
\begin{center}
$\begin{CD}
\mathcal{E}_L^0@>>> \mathcal{E}_L^1@>>>\mathcal{E}_L^2\\
@VM_0VV @VM_1VV@VM_2VV\\
\mathbb{H}^0(X,\Lambda_0,L^\bullet,\nabla)^\vee\otimes L@>v_0:=\nabla>> \mathbb{H}^0(X,\Lambda_0,L^\bullet,\nabla)^\vee\otimes L\otimes T_X@>v_1>> \mathbb{H}^0(X,\Lambda_0,L^\bullet,\nabla)^\vee\otimes L\otimes \wedge^2 T_X \\
\end{CD}$
\end{center}
Consider a section $\eta \in \Gamma(U,\mathcal{E}_L^p)$, where $U\subset X$ is an open set. It is given by a system $(\{a_i,b_i\})$ where $a_i\in \Gamma(U\cap U_i,\wedge^pT_X)$, $b_i\in \Gamma(U\cap U_i, \wedge^{p+1} T_X)$ such that $b_j=b_i$ and $a_j-a_i=\frac{1}{f_{ij}}[b_i, f_{ij}]$. Then for every $s=\{s_i\}\in \mathbb{H}^0(X,\Lambda_0,L^\bullet,\nabla)$ so that $-[\Lambda_0,s_i]+s_i T_i=0$, we let
\begin{align*}
M_p(\eta)(s_i)=s_i a_i+[b_i,s_i]
\end{align*}
We show that this is well-defined, in other words, $M_p(\eta)(s_i)$ and $M_p(\eta)(s_j)$ define a same element so that $f_{ij}M_p(\eta)(s_j)=M_p(\eta)(s_i)$. Indeed, on $U\cap U_i\cap U_j$,
\begin{align*}
f_{ij}M_p(\eta)(s_j)=f_{ij}(s_j a_j+[b_j, s_j])=s_i a_j+f_{ij} [b_j, s_j]=s_i(a_i+\frac{[b_i,f_{ij}]}{f_{ij}})+f_{ij}[b_j, s_j]\\
=s_i a_i+s_j[b_i,f_{ij}]+f_{ij}[b_j,s_j]=s_i a_i+[b_i,f_{ij}s_j]=s_i a_i+[b_i, s_i]=M_p(\eta)(s_i)
\end{align*}
 
Therefore the functions $M_p(\eta)(s_i)\in \Gamma(U\cap U_i,\wedge^pT_X)$ patch together to define a section $M_p(\eta)(s)\in \Gamma(U, L\otimes \wedge^i T_X)$. This defines $M_i$. 
Consider the induced linear map
\begin{align*}
M_1:\mathbb{H}^1(X,\Lambda_0,\mathcal{E}_L^\bullet)\to Hom(\mathbb{H}^0(X,\Lambda_0,L^\bullet,\nabla),\mathbb{H}^1(X,\Lambda_0,L^\bullet,\nabla))
\end{align*}
Let $\eta\in \mathbb{H}^1(X,\Lambda_0,\mathcal{E}_L)$ be represented by the \v{C}ech cocyle $(\{b_i, c_i\})\oplus(\{a_{ij},d_{ij}\})\in \mathcal{C}^0(\mathcal{U},\mathcal{E}_L^1)\oplus\mathcal{C}^1(\mathcal{U},\mathcal{E}_L^0)$. We consider $(a_{ij},d_{ij})$ in $\mathcal{O}_X|_{U_j}\oplus T_X|_{U_j}$. Then
\begin{align*}
M_1(\eta):\mathbb{H}^0(X,\Lambda_0,L^\bullet,\nabla)&\to \mathbb{H}^1(X,\Lambda_0,L^\bullet,\nabla)\\
                    \{s_i\}&\mapsto \overline{\{s_i b_i+[c_i, s_i]\}\oplus \{s_j a_{ij}+[d_{ij},s_j]\}}
\end{align*}

\begin{definition}
Let $A\in \bold{Art}$ and $(\mathcal{X},\Lambda,\mathcal{L},\nabla_{\mathcal{L}})$ be an infinitesimal deformation of $(X,\Lambda_0,X,L)$ over $Spec(A)$. Then we say that a section $s\in \mathbb{H}^0(X,\Lambda_0,L^\bullet,\nabla)$ extends to $(\mathcal{L},\nabla_{\mathcal{L}})$ if
\begin{align*}
s\in Im[\mathbb{H}^0(\mathcal{X},\Lambda,\mathcal{L}^\bullet,\nabla_\mathcal{L})\to \mathbb{H}^0(X,\Lambda_0,L^\bullet,\nabla)]
\end{align*}
\end{definition}

\begin{proposition}\label{sec2}
Let $(X,\Lambda_0)$ be a nonsingular projective Poisson variety, $(L,\nabla)$ a Poisson invertible sheaf on $(X,\Lambda_0)$, and $(\mathcal{X},\Lambda,\mathcal{L},\nabla_{\mathcal{L}})$ be a first order simultaneous deformation of $(X,\Lambda_0)$ and $(L,\nabla)$ over $Spec(k[\epsilon])$ defined by a cohomology class $\eta\in \mathbb{H}^1(X,\Lambda_0,\mathcal{E}_L^\bullet)$. Then a section $s\in \mathbb{H}^0(X,\Lambda_0,L^\bullet,\nabla)$ extends to a section $\tilde{s}\in\mathbb{H}^0(\mathcal{X},\Lambda,\mathcal{L}^\bullet,\nabla_{\mathcal{L}})$ of $(\mathcal{L},\nabla_{\mathcal{L}})$ if and only if $s\in ker(M_1(\eta))$.
\end{proposition}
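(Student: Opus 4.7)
The approach is to parallel the proof of Proposition \ref{sec1}, with the additional bookkeeping required by the simultaneous deformation of the underlying Poisson variety. The plan is: use the cocycle representation of $\eta$ built in the proof of Proposition \ref{spo} to describe $(\mathcal{X},\Lambda,\mathcal{L},\nabla_\mathcal{L})$ explicitly, write down what a lift $\tilde{s} = \{s_i + \epsilon t_i\}$ must satisfy on patches and on overlaps, and recognize the resulting equations as saying exactly that $M_1(\eta)(s)$ is a coboundary in the total \v{C}ech complex computing $\mathbb{H}^1(X,\Lambda_0,L^\bullet,\nabla)$.

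Concretely, fix an affine cover $\mathcal{U} = \{U_i\}$ on which $(L,\nabla)$ is represented by $(\{h_{ij}\},\{T_i^0\})$ and $\eta$ by
\begin{align*}
(\{(W_i,-\Lambda_i)\},\{(g_{ij}/h_{ij},p_{ij})\}) \in \mathcal{C}^0(\mathcal{U},\mathcal{E}_L^1)\oplus \mathcal{C}^1(\mathcal{U},\mathcal{E}_L^0),
\end{align*}
so that $\mathcal{X}$ is glued from $(U_i\times Spec(k[\epsilon]),\Lambda_0+\epsilon\Lambda_i)$ via the Poisson isomorphisms $Id+\epsilon p_{ij}$, while $\mathcal{L}$ has transition functions $F_{ij}=h_{ij}+\epsilon g_{ij}$ together with local Poisson vector fields $Y_i=T_i^0+\epsilon W_i$. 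A lift of $s=\{s_i\}$ amounts to local functions $t_i\in\Gamma(U_i,\mathcal{O}_X)$ satisfying, modulo $\epsilon^2$, both the transition condition $s_i+\epsilon t_i=F_{ij}(Id+\epsilon p_{ij})(s_j+\epsilon t_j)$ and the flatness condition $-[\Lambda_0+\epsilon\Lambda_i,s_i+\epsilon t_i]+(s_i+\epsilon t_i)(T_i^0+\epsilon W_i)=0$. Expanding each and using $s_i=h_{ij}s_j$ together with the hypothesis $-[\Lambda_0,s_i]+s_iT_i^0=0$, these simplify to
\begin{align*}
t_i/h_{ij}-t_j &= (g_{ij}/h_{ij})\,s_j+[p_{ij},s_j], \\
-[\Lambda_0,t_i]+t_i T_i^0 &= -\bigl(s_iW_i+[-\Lambda_i,s_i]\bigr).
\end{align*}
By the formula for $M_1$ introduced just above the proposition, these are exactly the statements that the representative cocycle $\bigl(\{s_iW_i+[-\Lambda_i,s_i]\},\{(g_{ij}/h_{ij})s_j+[p_{ij},s_j]\}\bigr)$ of $M_1(\eta)(s)$ is the total-complex coboundary of $\{-t_i\}\in \mathcal{C}^0(\mathcal{U},L)$ under the differential $(v_0,\delta)$ of the \v{C}ech double complex. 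Hence $\tilde{s}$ exists iff $M_1(\eta)(s)=0$; conversely, any $\{t_i\}$ realizing the coboundary, read backward through the expansion, produces the desired extension.

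The main difficulty, though entirely routine, is the bookkeeping: matching signs and the choice of trivialization ($U_i$ versus $U_j$) between the definition of $M_1$ and the \v{C}ech differentials in the total complex of $L^\bullet$, and checking that the new contributions $h_{ij}[p_{ij},s_j]$ and $-[\Lambda_i,s_i]$ coming from the deformation of the underlying variety align precisely with the $[d_{ij},s_j]$ and $[c_i,s_i]$ parts of $M_1(\eta)(s)$. Once this alignment is in place, the proof is a direct extension of the argument for Proposition \ref{sec1}, the two extra terms being exactly the trace on sections of the simultaneous deformation of $(X,\Lambda_0)$.
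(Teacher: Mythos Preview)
Your proposal is correct and follows essentially the same approach as the paper's proof. The only cosmetic difference is that the paper writes the transition condition as $\theta_{ji}(F_{ij})\cdot(s_j+\epsilon t_j)=\theta_{ji}(s_i+\epsilon t_i)$ on $U_j$ (pulling everything to the $j$-chart) rather than your $s_i+\epsilon t_i=F_{ij}(Id+\epsilon p_{ij})(s_j+\epsilon t_j)$ on $U_i$; after expansion both reduce to the same pair of identities, and the identification of $M_1(\eta)(s)$ with the coboundary of $\{-t_i\}$ is exactly as you describe.
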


\begin{proof}
As above, let $(L,\nabla)$ be represented by $(\{f_{ij}\},\{T_i\})\in \mathcal{C}^1(\mathcal{U},\mathcal{O}_X^*)\oplus \mathcal{C}^0(\mathcal{U},T_X)$ and $\eta=(\mathcal{X},\Lambda, \mathcal{L},\nabla_{\mathcal{L}})$ by $\{(b_i,c_i)\}\oplus \{(a_{ij},d_{ij})\}\in \mathcal{C}^0(\mathcal{U},\mathcal{E}_L^1)\oplus \mathcal{C}^1(\mathcal{U},\mathcal{E}_L^0)$ for an affine open covering $\mathcal{U}=\{U_i\}$ of $X$ so that we have Poisson isomorphisms $\theta_{ij}:=Id+\epsilon d_{ij}:( \mathcal{O}_X(U_i),\Lambda_0-\epsilon c_j)\to (\mathcal{O}_X(U_i),\Lambda_0-\epsilon c_i)$, and $(\mathcal{L},\nabla)$ is represented by $F_{ij}=f_{ij}+\epsilon a_{ij}f_{ij}$ and $Y_i=T_i+\epsilon b_i$ (see the proof of (1) in Proposition \ref{spo}). Let's assume that $s\in \mathbb{H}^0(X,\Lambda_0,L^\bullet,\nabla)$ is represented by $\{s_i\},s_i\in \Gamma(U_i,\mathcal{O}_X)$, such that $s_i=f_{ij}s_j$ and $-[\Lambda_0,s_i]+s_iT_i=0$. Then $M_1(\eta)(s)$ is represented by $\{s_i b_i+[c_i, s_i]\}\oplus \{s_j a_{ij}+[d_{ij},s_j]\}\in \mathcal{C}^0(\mathcal{U},L)\oplus \mathcal{C}^1(\mathcal{U},L\otimes T_X)$.

In order for $s$ to extend to a section $\tilde{s}\in \mathbb{H}^0(\mathcal{X},\Lambda,\mathcal{L},\nabla_{\mathcal{L}})$, it is necessary and sufficient that there exist $\{t_i\}, t_i\in \Gamma(U_i,\mathcal{O}_X)$ such that $\theta_{ji}(F_{ij})\cdot (s_j+\epsilon t_j)=\theta_{ji}(s_i +\epsilon t_i)$ on $U_j$, and $-[\Lambda_0-\epsilon c_i, s_i+\epsilon t_i]+(s_i+\epsilon t_i)(T_i+\epsilon b_i)=0$. Then we have $(1-\epsilon d_{ij})(f_{ij}+\epsilon f_{ij} a_{ij})\cdot (s_j+\epsilon t_j)=(1-\epsilon d_{ij})(s_i+\epsilon t_i)$ so that $(f_{ij}-\epsilon d_{ij}f_{ij}+\epsilon f_{ij} a_{ij})(s_j+\epsilon t_j)=(1-\epsilon d_{ij})(s_i+\epsilon t_i)$. By considering the coefficient of $\epsilon$, we get $f_{ij}t_j-s_j d_{ij}f_{ij}+s_i a_{ij}=t_i-d_{ij} s_i$. Then $d_{ij}s_{i}-s_{j}d_{ij} f_{ij}+s_{i} a_{ij}=t_{i}-f_{ij}t_j$ and so $d_{ij}(f_{ij}s_{j})-s_{j}d_{ij} f_{ij}+s_{i} a_{ij}=t_{i}-f_{ij}t_j$. Hence $f_{ij} d_{ij} s_j+s_i a_{ij}=t_{i}-f_{ij} t_j$ so that $d_{ij}s_j+s_j a_{ij}=s_ja_{ij}+[d_{ij},s_j]=f_{ij}t_{i}-t_j$ on $U_j$.

On the other hand, $-[\Lambda_0-\epsilon c_i, s_i+\epsilon t_i]+(s_i+\epsilon t_i)(T_i+\epsilon b_i)=0$, which means that $-[\Lambda_0,t_i]+[c_i, s_i]+s_i b_i+t_i T_i=0 $. Then $-[\Lambda_0, -t_i]-t_i T_i=s_i b_i+[c_i, s_i].$ Hence $\{s_i b_i+[c_i, s_i]\}\oplus \{s_j a_{ij}+[d_{ij},s_j]\}$ is a coboundary so that $M_1(\eta)(s)=0$.
\end{proof}

\appendix

\section{Isomorphism of the second cohomology groups between \v{C}ech and Deaulbault resolutions}\label{appendixA}
Let $(X,\Lambda_0)$ be a compact holomorphic Poisson manifold. In this appendix, we explicitly describe an isomorphism of the second cohomology groups between \v{C}ech and Deaulbault resolutions of the complex of sheaves $\mathcal{O}_X\xrightarrow{[\Lambda_0,-]} T_X\xrightarrow{[\Lambda_0,-]} \wedge^2 T_X\xrightarrow{[\Lambda_0,-]}\cdots$. We connect \v{C}ech reolsution with Deaulbault resolution in the following (here $\delta$ is the \v{C}ech map).

{\tiny
\[
\xymatrixrowsep{0.2in}
\xymatrixcolsep{0.05in}
\xymatrix{
&& & H^0(X,\wedge^2 T_X) \ar[ld]  \ar[rr]  &  & C^0(\wedge^2 T_X) \ar[ld]\\
&&A^{0,0}(X,\wedge^2 T_X)  \ar[rr] & & C^0(\mathscr{A}^{0,0}(\wedge^2 T_X))\\
&&& H^0(X,T_X) \ar@{.>}[uu] \ar@{.>}[ld] \ar@{.>}[rr] & & C^0(T_X) \ar[uu] \ar[ld] \ar[rr]^{\delta} && C^1( T_X) \ar[ld]\\
&& A^{0,0}(X,  T_X)  \ar[uu]^{[\Lambda_0,-]} \ar[ld]^{\bar{\partial}} \ar[rr] && C^0(\mathscr{A}^{0,0}( T_X)) \ar[uu]^{[\Lambda_0,-]} \ar[ld]^{\bar{\partial}} \ar[rr]^{\delta} && C^1(\mathscr{A}^{0,0}(T_X))   \\
&A^{0,1}(X, T_X) \ar[rr]  && C^0(\mathscr{A}^{0,1}(T_X)) && C^0(\mathcal{O}_X) \ar@{.>}[ld]\ar@{.>}[uu] \ar@{.>}[rr]^{-\delta} & & C^1(\mathcal{O}_X) \ar[uu] \ar[ld] \ar[rr]^{\delta}& &C^2(\mathcal{O}_X) \ar[ld]  \\
& & A^{0,0}(X,\mathcal{O}_X) \ar@{.>}[uu] \ar@{.>}[rr]  \ar@{.>}[ld]^{\bar{\partial}} && C^0(\mathscr{A}^{0,0}) \ar[uu]^{[\Lambda_0,-]} \ar[rr]^{-\delta} \ar[ld]^{\bar{\partial}} && C^1(\mathscr{A}^{0,0}) \ar[ld]^{\bar{\partial}} \ar[uu]^{[\Lambda_0,-]} \ar[rr]^{\delta} && C^2(\mathscr{A}^{0,0})\\
& A^{0,1}(X,\mathcal{O}_X) \ar[uu]^{[\Lambda_0,-]} \ar[rr] \ar[ld]^{\bar{\partial}}& & C^0(\mathscr{A}^{0,1}) \ar[uu]^{[\Lambda_0,-]} \ar[rr]^{-\delta} \ar[ld]^{\bar{\partial}} &&  C^1(\mathscr{A}^{0,1})\\
A^{0,2}(X,\mathcal{O}_X) \ar[rr]& & C^0(\mathscr{A}^{0,2})
}\]}

Let $(\{a_{ijk}\},\{b_{ij}\},\{c_i\})\in \mathcal{C}^2(\mathcal{U},\mathcal{O}_X)\oplus \mathcal{C}^1(\mathcal{U},T_X)\oplus \mathcal{C}^0(\mathcal{U},\wedge^2 T_X)$ be a $2$-cocycle, where $\mathcal{U}=\{U_i\}$ is an open cover of $X$ and $U_i$ is an open ball.

 On the first floor, since $-\delta(\{a_{ijk}\})=0$, there exists $\{d_{ij}\}\in \mathcal{C}^1(\mathcal{U},\mathscr{A}^{0,0})$ such that $\delta(\{d_{ij}\})=\{a_{ijk}\}$. Since $-\delta(\{\bar{\partial} d_{ij}\})=-\bar{\partial}(\delta(\{d_{ij}\})=\{-\bar{\partial}a_{ijk}\}=0,$ there exists $\{u_i\}\in \mathcal{C}^0(\mathcal{U},\mathscr{A}^{0,1})$ such that $-\delta(\{u_i\})=\{\bar{\partial} d_{ij}\}$. Since $-\delta(\{\bar{\partial}u_i\})=\{\bar{\partial}\bar{\partial}d_{ij}\}=0$, we have $\{\bar{\partial} u_i\}\in A^{0,2}(X,\mathcal{O}_X)$.
 
 On the second floor, since $\delta(\{[\Lambda_0,d_{ij}]-b_{ij}])=\{[\Lambda_0,a_{ijk}]\}-\delta(\{b_{ij}\})=0$, there exists $\{f_i\}\in\mathcal{C}^0(\mathcal{U},\mathscr{A}^{0,0}(T_X))$ such that $\delta(\{f_i\})=\{[\Lambda_0,d_{ij}]-b_{ij}]\}$. Since $\delta(\{[\Lambda_0,u_i]-\bar{\partial}f_i\})=\{-[\Lambda_0,\bar{\partial}d_{ij}]-\bar{\partial}[\Lambda_0,d_{ij}]\}=0$, we have $\{[\Lambda_0,u_i]-\bar{\partial}f_i\}\in A^{0,1}(X,T_X)$.
 
 On the third floor, since $\delta(\{-c_i-[\Lambda_0,f_i]\})=-\delta(\{c_i\})+\{[\Lambda_0,b_{ij}]\}=0$, we have $\{-c_i-[\Lambda_0,f_i]\}\in A^{0,0}(X,\wedge^2 T_X)$.
 
 We claim that $(\{\bar{\partial}u_i\},\{[\Lambda_0,u_i]-\bar{\partial}f_i\},\{-c_i-[\Lambda_0,f_i]\})\in A^{0,2}(X,\mathcal{O}_X)\oplus A^{0,1}(X,T_X)\oplus A^{0,0}(X,\wedge^2 T_X)$ define a $2$-cocycle in the Deaulbault reolution. Indeed, $\bar{\partial}\bar{\partial}u_i=0, [\Lambda_0,\bar{\partial}u_i]+\bar{\partial}([\Lambda_0,u_i]-\bar{\partial} f_i)=[\Lambda_0,\bar{\partial} u_i]-[\Lambda_0,\bar{\partial}u_i]=0$. $[\Lambda_0,[\Lambda_0,u_i]-\bar{\partial}f_i]-\bar{\partial}c_i-\bar{\partial}[\Lambda_0, f_i]=-[\Lambda_0, \bar{\partial} f_i]+[\Lambda_0, \bar{\partial}f_i]=0$, and $[\Lambda_0, -c_i-[\Lambda_0,f_i]]=[\Lambda_0, -c_i]=0$.
 
 We define an isomorphism of the second cohomology groups between \v{C}ech and Deaulbault resolutions by the correspondence
 \begin{align}\label{correspondence}
 (\{a_{ijk}\},\{b_{ij}\},\{c_i\})\to (\{\bar{\partial}u_i\},\{[\Lambda_0,u_i]-\bar{\partial}f_i\},\{-c_i-[\Lambda_0,f_i]\})
 \end{align}
We claim that this is well-defined. In other words, equivalent $2$-cocycles in \v{C}ech resolution corresponds to equivalent $2$-cocycles in Deaulbault resolution. Let $ (\{a_{ijk}'\},\{b_{ij}'\},\{c_i'\})$ be an equivalent $2$-cocycle so that there exists $\{t_{ij}\}\in \mathcal{C}^1(\mathcal{U},\mathcal{O}_X)$ and $\{r_i\}\in \mathcal{C}^0(\mathcal{U},T_X)$ such that $\delta(\{t_{ij}\})=\{a_{ijk}-a_{ijk}'\}$, $\delta(\{r_i\})+\{[\Lambda_0,t_{ij}]\}=\{b_{ij}-b_{ij}'\}$, and $\{[\Lambda_0,r_i]\}=\{c_i-c_i'\}$. Since $\delta(\{d_{ij}-d_{ij}'-t_{ij}\})=\{a_{ijk}-a_{ijk}'\}-\delta(\{t_{ij}\})=0$, there exists $\{s_i\}\in \mathcal{C}^0(\mathcal{U},\mathscr{A}^{0,0})$ such that $-\delta(\{s_i\})=\{d_{ij}-d_{ij}'-t_{ij}\}$. Since $-\delta(\{u_i-u_i'-\bar{\partial} s_i\})=\{\bar{\partial} (d_{ij}-d_{ij}')-\bar{\partial}(d_{ij}-d_{ij}'-t_{ij})\}=0$, $\{u_i-u_i'-\bar{\partial} s_i\}\in A^{0,1}(X,\mathcal{O}_X)$.

On the other hand, since $\delta(\{f_i'-f_i-[\Lambda_0,s_i]-r_i\})=\{[\Lambda_0,d_{ij}'-d_{ij}]-(b_{ij}'-b_{ij}) +[\Lambda_0,d_{ij}-d_{ij}'-t_{ij}]\}-\delta(\{r_i\})=\{b_{ij}-b_{ij}'-[\Lambda_0,t_{ij}]\}-\delta(\{r_i\})=0$, we have $\{f_i'-f_i-[\Lambda_0,s_i]-r_i\}\in A^{0,0}(X,T_X)$.

We claim that $(\{u_i-u_i'-\bar{\partial}s_i\},\{f_i'-f_i-[\Lambda_0,s_i]-r_i\})$ is mapped to $(\{\bar{\partial}(u_i-u_i')\}, \{[\Lambda_0,u_i-u_i']-\bar{\partial}(f_i-f_i')\}, \{-(c_i-c_i')-[\Lambda_0,f_i-f_i']\})$. Indeed, $[\Lambda_0,f_i'-f_i-[\Lambda_0,s_i]-r_i]=-[\Lambda_0,f_i-f_i']-[\Lambda_0,r_i]=-[\Lambda_0,f_i-f_i']-(c_i-c_i')$, and $[\Lambda_0, u_i-u_i'-\bar{\partial} s_i]+\bar{\partial}(f_i'-f_i-[\Lambda_0,s_i]-r_i)=[\Lambda_0,u_i-u_i']-\bar{\partial}(f_i-f_i')-[\Lambda_0, \bar{\partial} s_i]-\bar{\partial}[\Lambda_0, s_i]=0$. Hence $(\{a_{ijk}\},\{b_{ij}\},\{c_i\})$ is cohomologous to $(\{a_{ijk}'\},\{b_{ij}'\},\{c_i'\})$ so that the correspondence $(\ref{correspondence})$ is well-defined.

\section{Deformations of Poisson vector bundles}\label{appendixB}

\begin{definition}\label{ve3}
Let $(X,\Lambda_0)$ be an algebraic Poisson scheme. A Poisson vector bundle $F$ on $(X,\Lambda_0)$ is a Poisson $\mathcal{O}_X$-module which is locally free of finite rank so that $F$ is equipped with a flat Poisson connection $\nabla$ $($see Definition $\ref{lco2})$. In this case, we denote the Poisson vector bundle by $(F,\nabla)$. Assume that $(X,\Lambda_0)$ is a Poisson scheme over $S$. Then $(F,\nabla)$ is called a Poisson vector bundle over $S$ if the associated Poisson connection $\{-,-\}_F:\mathcal{O}_X\otimes_k F\to F$ is $\mathcal{O}_S$-linear.
\end{definition}

\begin{remark}\label{ve1}
Let $(X,\Lambda_0)$ be a nonsingular Poisson variety and $(F,\nabla)$ be a Poisson vector bundle of rank $n$. Let $\,\,\mathcal{U}=\{U_\alpha\}$ be an open covering of $X$ such that $F|_{U_\alpha}\cong \oplus_{i=1}^n \mathcal{O}_X|_{U_\alpha} $ with the transition matrices $\{F^{\alpha\beta}=(F_{ij}^{\alpha\beta})|F_{ij}^{\alpha\beta}\in \Gamma(U_{\alpha\beta},\mathcal{O}_X)\}$ where each matrix $F^{\alpha\beta}$ defines a linear transformation $c^{\alpha\beta}:\oplus_{i=1}^n \mathcal{O}_X|_{U_\beta}\xrightarrow{(F_{ij}^{\alpha\beta})} \oplus_{i=1}^n \mathcal{O}_X|_{U_\alpha}$ by the rule $c^{\alpha\beta}(e_j^{\beta})=\sum_{i=1}^n F_{ij}^{\alpha\beta}e_i^\alpha$. Here $e_i^\alpha=(0,\cdots,\overset{i-th}{1},\cdots,0)\in \oplus_{i=1}^n \Gamma(U_\alpha, \mathcal{O}_X)$ and $e_j^\beta=(0,\cdots,\overset{j-th}{1},\cdots,0)\in \oplus_{i=1}^n \Gamma(U_\beta,\mathcal{O}_X)$. Then 
\begin{enumerate}
\item $F^{\alpha\beta}F^{\beta\gamma}=F^{\alpha\gamma}$, equivalently for any pair $(i,j)$, $\sum_{j=1}^nF_{ij}^{\alpha\beta}F_{jk}^{\beta\gamma}=F_{ik}^{\alpha\gamma}$.
\item
Since $F|_{U_\alpha}\cong\oplus_{i=1}^n \mathcal{O}_X|_{U_\alpha}$, the flat Poisson connection $\nabla$ can be locally described as $\nabla=v_0:\bigoplus_{i=1}^n \mathcal{O}_X|_{U_\alpha}\to \bigoplus_{i=1}^n T_X|_{U_\alpha}$. Let $v_0(e_j^\alpha)=\sum_{i=1}^n T_{ij}^\alpha e_i^\alpha=(T_{1j}^\alpha,..., T_{nj}^\alpha)\in \oplus_{i=1}^n \Gamma(U_\alpha,T_X)$. Since $v_0$ is flat, by Remark $\ref{lco}$, ${v}_1(v_0(e_j^\alpha))={v}_1(\sum_{i=1}^n T_{ij}^\alpha e_i^\alpha)=\sum_{i=1}^n-[T_{ij}^\alpha,\Lambda_0]e_i^\alpha-\sum_{i=1}^n(T_{ij}^\alpha\wedge v_0(e_i^\alpha))=\sum_{i=1}^n-[T_{ij}^\alpha,\Lambda_0]e_i^\alpha-\sum_{i,k=1}^n(T_{ij}^\alpha\wedge T_{ki}^\alpha)e_k^\alpha=0$. By considering the coefficient of $e_i^\alpha$, we have $-[T_{ij}^\alpha,\Lambda_0]-\sum_{k=1}^n T_{kj}^\alpha\wedge T_{ik}^\alpha=0$. Hence, for any pair $(i,j)$, we have 
\begin{align}\label{ve8}
[\Lambda_0,T_{ij}^\alpha]+\sum_{k=1}^n T_{ik}^\alpha \wedge T_{kj}^\alpha=0.
\end{align}
\item We will find a relation between $(T_{ij}^\alpha)$ and $(T_{ij}^\beta)$. Let $\{-,-\}$ be the Poisson bracket induced form $\Lambda_0$ and $\{-,-\}_\alpha$ be the Poisson connection on $F|_{U_\alpha}\cong \oplus_{i=1}^n \mathcal{O}_X|_{U_\alpha}$. For any local section $f\in \mathcal{O}_X$, $\{f, c^{\alpha\beta}(e_j^\beta)\}_\alpha=\sum_{i=1}^n \{f, F_{ij}^{\alpha\beta}e_i^\alpha\}_\alpha=\sum_{i=1}^n \{f,F_{ij}^{\alpha\beta}\}e_i^\alpha+\sum_{i,k=1}^n F_{ij}^{\alpha\beta}T_{ki}^\alpha(f)e_k^\alpha=c^{\alpha\beta}(\{f,e_j^\beta\}_\beta)=c^{\alpha\beta}(\sum_{k=1}^n T_{kj}^\beta(f)e_k^\beta)=\sum_{k=1}^n T_{kj}^\beta(f)c^{\alpha\beta}(e_k^\beta)=\sum_{k,r=1}^n T_{kj}^\beta(f)F^{\alpha\beta}_{rk}e_r^\alpha$ so that we have $\sum_{i=1}^n -[\Lambda_0, F_{ij}^{\alpha\beta}](f)e_i^\alpha+\sum_{i,k=1}^n F_{ij}^{\alpha\beta}T_{ki}^\alpha(f) e_k^\alpha=\sum_{k,r=1}^n T_{kj}^\beta(f)F^{\alpha\beta}_{rk}e_r^\alpha$. Hence, for any pair $(i,j)$, we have 
\begin{align}\label{ve9}
-[\Lambda_0,F^{\alpha\beta}_{ij}]+\sum_{k=1}^n T_{ik}^\alpha F_{kj}^{\alpha\beta} =\sum_{k=1}^n  F^{\alpha\beta}_{ik}T_{kj}^\beta
\end{align}
\end{enumerate}
\end{remark}

\begin{remark}\label{ve5} 
Let $(X,\Lambda_0)$ be a nonsingular Poisson variety and $(F,\nabla)$ be a Poisson vector bundle with the associated flat Poisson connection $\{-,-\}_F:\mathcal{O}_X\otimes F\to F$. Then $\mathscr{H}om(F,F)$ is a Poisson vector bundle via $\{f,\phi\}_{\mathscr{H}om(F,F)}(x)=\{f,\phi(x)\}_{F}-\phi(\{f,x\}_F)$, where $f\in \mathcal{O}_X,x\in F,\phi\in \mathscr{H}om(F,F)$ $($see \cite{Pol97}$)$. Let us denote the induced Poisson $\mathcal{O}_X$-module structure on $\mathscr{H}om(\mathcal{F},\mathcal{F})$ by $\nabla_{\mathscr{H}om(F,F)}$. Then $\nabla_{\mathscr{H}om(F,F)}:\mathscr{H}om(F,F)\to T_X\otimes\mathscr{H}om(F,F)$ is defined by 
$\nabla_{\mathscr{H}om(F,F)}(\phi)(f)=\{f,\phi\}_{\mathscr{H}om(F,F)}$ and we have a complex of sheaves $($see Remark $\ref{lco})$
\begin{align*}
\mathscr{H}om(F,F)^\bullet:\mathscr{H}om(F,F)\xrightarrow{v_0:=\nabla_{\mathscr{H}om(F,F)}}T_X\otimes \mathscr{H}om(F,F)\xrightarrow{v_1}\wedge^2 T_X\otimes\mathscr{H}om(F,F)\xrightarrow{v_2} \cdots
\end{align*}
We will denote the $i$-th hypercohomology group by $\mathbb{H}^i(X,\Lambda_0, \mathscr{H}om(F,F)^\bullet,\nabla_{\mathscr{H}om(F,F)})$. Let $\mathcal{U}=\{U_\alpha\}$ be an open cover of $X$  such that $F|_{U_\alpha}\cong\oplus_{i=1}^n \mathcal{O}|_{U_\alpha}$ as in Remark $\ref{ve1}$. We keep the notations in Remark $\ref{ve1}$. Then $\mathscr{H}om(F,F)|_{U_\alpha}$ can be identified with a sheaf of $\mathcal{O}_X|_{U_\alpha}$-valued matrices and $T_X\otimes\mathscr{H}om(F,F)|_{U_\alpha}$ can be identified with a sheaf of $T_X|_{U_\alpha}$-valued matrices. Let $E_{ij}^\alpha$ be the matrix whose $(i,j)$ component is $1$ and other components are $0$. Then $E_{ij}^\alpha$ can be considered as a linear transformation from $\oplus_{i=1}^n\mathcal{O}_X|_{U_\alpha}$ to $\oplus_{i=1}^n \mathcal{O}_X|_{U_\alpha}$ which defines an element of $\Gamma(U_\alpha,\mathscr{H}om(F,F))$. Now we compute $\nabla_{\mathscr{H}om(F,F)}(E_{ij}^\alpha)$ explicitly. Set $\nabla_{\mathscr{H}om(F,F)}(E_{ij}^\alpha)=\sum_{p,q=1}^n c^\alpha_{pq}\cdot E_{pq}^\alpha$, where $c^\alpha_{pq}\in \Gamma(U_\alpha,T_X)$. We explicitly find $c^\alpha_{pq}$. For any local section $f\in \mathcal{O}_X$, we have $\nabla_{\mathscr{H}om(F,F)}(E_{ij}^\alpha)(f)=\{f,E_{ij}^\alpha\}_{\mathscr{H}om(F,F)}=\sum_{p,q=1}^n c^\alpha_{pq}(f)\cdot E^\alpha_{pq}$. Then by applying $e_k^\alpha$, we have $\{f,E_{ij}^\alpha(e_k^\alpha)\}_F-E_{ij}^\alpha(\{f,e_k^\alpha\}_F)=\{f,E^\alpha_{ij}(e_k^\alpha)\}_F-E_{ij}^\alpha(\sum_{r=1}^n T_{rk}^\alpha(f)e_r^\alpha)=\{f,E_{ij}^\alpha(e_k^\alpha)\}_F-\sum_{r=1}^n T_{rk}^\alpha(f)E^\alpha_{ij}(e_r^\alpha)=\{f,E_{ij}^\alpha(e_k^\alpha)\}_F-T_{jk}^\alpha(f)e_i^\alpha$. On the other hand, $\sum_{p,q=1}^n c_{pq}(f)E_{pq}^\alpha(e_k^\alpha)=\sum_{p=1}^n c_{pk}^\alpha(f)e_p^\alpha$. Then we have
\begin{align*}
\{f,E_{ij}^\alpha(e_k^\alpha)\}_F-T_{jk}^\alpha(f)e_i^\alpha=\sum_{p=1}^n c_{pk}^\alpha(f)e_p^\alpha
\end{align*}
\begin{enumerate}
\item When $k\ne j$, $-T_{jk}^\alpha(f)e_i^\alpha=\sum_{p=1}^n c_{pk}^\alpha(f)e_p^\alpha$ so that $-T_{jk}^\alpha=c_{ik}^\alpha$, and $c_{pk}^\alpha=0$ for $p\ne i$.
\item When $k=j$, $\{f,e_i^\alpha\}_{F}-T_{jj}^\alpha(f)e_i^\alpha=\sum_{r=1}^n T_{ri}^\alpha(f) e_r^\alpha-T_{jj}^\alpha(f)e_i^\alpha=\sum_{p=1}^n c^\alpha_{pj}(f)e_p^\alpha$ so that $T_{ii}^\alpha-T_{jj}^\alpha=c_{ij}^\alpha$, and $ T^\alpha_{pi}=c_{pj}^\alpha$ for $p\ne i$.
\end{enumerate}
Then $\nabla_{\mathscr{H}om(F,F)}(E_{ij}^\alpha)=(T_{ii}^\alpha -T_{jj}^\alpha) E_{ij}^\alpha+\sum_{k\ne j} -T_{jk}^\alpha E_{ik}^\alpha+\sum_{p\ne i} T_{pi}^\alpha E_{pj}^\alpha$ so that for any pair $(i,j)$,
\begin{align}\label{ve7}
\nabla_{\mathscr{H}om(F,F)}(E_{ij}^\alpha)=v_0(E_{ij}^\alpha)=\sum_{p=1}^n T_{pi}^\alpha E_{pj}^\alpha-\sum_{k=1}^n T_{jk}^\alpha E_{ik}^\alpha
\end{align}

\end{remark}

\begin{definition}\label{ve4}
Let $(X,\Lambda_0)$ be a nonsingular Poisson variety and $(F,\nabla)$ be a Poisson vector bundle on $(X,\Lambda_0)$. Let $A$ be in $\bold{Art}$. An infinitesimal deformation of $(F,\nabla)$ over $Spec(A)$ is a Poisson vector bundle $(\mathcal{F},\nabla_{\mathcal{F}})$ over $A$ on $(X\times_{Spec(k)} Spec(A),\Lambda_0)$ such that $(F,\nabla)=(\mathcal{F},\nabla_{\mathcal{F}})|_{(X,\Lambda_0)}$. Two deformations $(\mathcal{F},\nabla_{\mathcal{F}})$ and $(\mathcal{F}',\nabla_{\mathcal{F}'})$ of $(F,\nabla)$ over $A$ are isomorphic if there is an isomorphism $(\mathcal{F},\nabla_\mathcal{F})\cong (\mathcal{F}',\nabla_{\mathcal{F}'})$ of Poisson $\mathcal{O}_{X\times_{Spec(k)} Spec(A)}$-modules. Then we can define a functor of Artin rings
\begin{align*}
Def_{(F,\nabla)}:\bold{Art}&\to (sets)\\
                                  A&\mapsto \{\text{infinitesimal deformations of $(F,\nabla)$ over $A$}\}/isomorphism
\end{align*}
\end{definition}

\begin{proposition}\label{ve2}
Let $(X,\Lambda_0)$ be a nonsingular Poisson variety and $(F,\nabla)$ be a Poisson vector bundle of rank $n$ on $(X,\Lambda_0)$. Then
\begin{enumerate}
\item There is a $1-1$ correspondence
\begin{align*}
Def_{(\mathcal{F},\nabla)}(k[\epsilon])\cong \mathbb{H}^1(X,\Lambda_0,\mathscr{H}om(F,F)^\bullet,\nabla_{\mathscr{H}om(F,F)})
\end{align*}
\item Let $A\in \bold{Art}$ and $\eta=(\mathcal{F},\nabla_{\mathcal{F}})$ be an infinitesimal Poisson deformation of $(F,\nabla)$ over $A$. Then, to every small extension $e:0\to (t)\to \tilde{A}\to A\to 0$, we can associate an element $o_\eta(e)\in \mathbb{H}^2(X,\Lambda_0,\mathscr{H}om(F,F)^\bullet, \nabla_{\mathscr{H}om(F,F)})$ called the obstruction lifting of $\eta$ to $\tilde{A}$, which is $0$ if and only if a lifting of $\eta$ exists.
\end{enumerate}
\end{proposition}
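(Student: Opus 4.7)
The strategy is to parallel the proof of Proposition \ref{tpl}, trivializing $(F,\nabla)$ on an affine open cover $\mathcal{U}=\{U_\alpha\}$ as in Remark \ref{ve1} (transition matrices $F^{\alpha\beta}=(F^{\alpha\beta}_{ij})$ and connection matrices $T^\alpha=(T^\alpha_{ij})$ satisfying the cocycle identity, equation \eqref{ve8} and equation \eqref{ve9}) and translating everything into matrix data. A first-order Poisson deformation $(\mathcal{F},\nabla_{\mathcal{F}})$ over $k[\epsilon]$ will be given by lifted transition matrices $\tilde F^{\alpha\beta}_{ij}=F^{\alpha\beta}_{ij}+\epsilon G^{\alpha\beta}_{ij}$ and lifted connection matrices $\tilde T^\alpha_{ij}=T^\alpha_{ij}+\epsilon W^\alpha_{ij}$. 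Viewing $G^{\alpha\beta}$ as an element of $\Gamma(U_{\alpha\beta},\mathscr{H}om(F,F))$ (via $E^\alpha_{ij}$ or, equivalently, by composing with $(F^{\alpha\beta})^{-1}$ on one side) and $W^\alpha$ as an element of $\Gamma(U_\alpha,T_X\otimes\mathscr{H}om(F,F))$, the $\epsilon$-coefficient of the cocycle condition $\tilde F^{\alpha\beta}\tilde F^{\beta\gamma}=\tilde F^{\alpha\gamma}$ will give the Čech relation $\delta\{G^{\alpha\beta}\}=0$, while the $\epsilon$-coefficients of \eqref{ve8} and \eqref{ve9} for $(\tilde T,\tilde F)$ will give $v_0\{W^\alpha\}=0$ and $v_0\{G^{\alpha\beta}\}+\delta\{W^\alpha\}=0$, using the explicit formula \eqref{ve7} for $v_0=\nabla_{\mathscr{H}om(F,F)}$ in the basis $\{E^\alpha_{ij}\}$.

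Thus $(\{W^\alpha\},\{G^{\alpha\beta}\})$ will define a 1-cocycle in the Čech resolution
\[
\mathcal{C}^0(\mathcal{U},T_X\otimes\mathscr{H}om(F,F))\oplus\mathcal{C}^1(\mathcal{U},\mathscr{H}om(F,F))
\]
of $\mathscr{H}om(F,F)^\bullet$. Two deformations will be Poisson $\mathcal{O}_{X\times\operatorname{Spec}(k[\epsilon])}$-module isomorphic if and only if they are related by local automorphisms $\mathrm{Id}+\epsilon a^\alpha$ with $a^\alpha\in\Gamma(U_\alpha,\mathscr{H}om(F,F))$; the compatibility with the two connections will produce precisely the coboundary relations $W^\alpha-W'^\alpha=v_0(a^\alpha)$ and $G^{\alpha\beta}-G'^{\alpha\beta}=a^\alpha-a^\beta$ (after the usual conjugation by $F^{\alpha\beta}$), giving the bijection of part (1). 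Conversely, any 1-cocycle manifestly recovers matrix data defining a first-order Poisson deformation by reversing the construction.

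For part (2), let $\eta=(\mathcal{F},\nabla_{\mathcal{F}})$ be an infinitesimal deformation over $A$ and $e:0\to(t)\to\tilde A\to A\to 0$ a small extension. Following the template of Propositions \ref{lifting} and \ref{tpl}, I will choose arbitrary lifts $\tilde F^{\alpha\beta}$ (nowhere vanishing matrices) and $\tilde T^\alpha$ to $\tilde A$ restricting to the given data over $A$. The possible failures are measured by
\[
\tilde F^{\alpha\beta}\tilde F^{\beta\gamma}(\tilde F^{\alpha\gamma})^{-1}=\mathrm{Id}+t\,G^{\alpha\beta\gamma},\qquad -[\Lambda_0,\tilde T^\alpha_{ij}]-\sum_k\tilde T^\alpha_{ik}\wedge\tilde T^\alpha_{kj}=t\,S^\alpha_{ij},
\]
together with the $t$-coefficient $Q^{\alpha\beta}$ of the defect of equation \eqref{ve9}. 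I will show, by the same kind of computation as in the proof of Proposition \ref{tpl} and Proposition \ref{lifting} (differentiating the lifted identities, using the Jacobi-type identity for the Schouten bracket, and expanding via \eqref{ve7}), that
\[
(\{S^\alpha\},\{Q^{\alpha\beta}\},\{G^{\alpha\beta\gamma}\})\in
\mathcal{C}^0(\mathcal{U},\wedge^2T_X\otimes\mathscr{H}om(F,F))
\oplus\mathcal{C}^1(\mathcal{U},T_X\otimes\mathscr{H}om(F,F))
\oplus\mathcal{C}^2(\mathcal{U},\mathscr{H}om(F,F))
\]
is a 2-cocycle, and that a different choice of lifts $\tilde F'^{\alpha\beta}=\tilde F^{\alpha\beta}+tH^{\alpha\beta}$, $\tilde T'^\alpha=\tilde T^\alpha+tV^\alpha$ produces a cohomologous 2-cocycle with difference exactly $(v_0\{V^\alpha\},\delta\{V^\alpha\}+v_0\{H^{\alpha\beta}\},\delta\{H^{\alpha\beta}\})$. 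Setting $o_\eta(e):=[(\{S^\alpha\},\{Q^{\alpha\beta}\},\{G^{\alpha\beta\gamma}\})]\in\mathbb{H}^2(X,\Lambda_0,\mathscr{H}om(F,F)^\bullet,\nabla_{\mathscr{H}om(F,F)})$ will give a well-defined element, and a lift exists precisely when lifts $\tilde F^{\alpha\beta},\tilde T^\alpha$ can be chosen with $G^{\alpha\beta\gamma}=0$, $Q^{\alpha\beta}=0$, $S^\alpha=0$, i.e.\ iff $o_\eta(e)=0$.

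The main technical obstacle will be the bookkeeping in verifying the 2-cocycle identities: the matrix factor prevents one from literally importing the computations from Proposition \ref{tpl}, and one must repeatedly invoke \eqref{ve7} (together with the graded Jacobi identity for the Schouten bracket) to show that the mixed expressions involving $[\Lambda_0,-]$, matrix multiplication, and transition conjugation assemble into the differentials of the total complex $\mathscr{H}om(F,F)^\bullet$. Once this identification is in place, all subsequent verifications (independence of choices, naturality under isomorphism, vanishing-equivalent-to-lifting) are formal consequences, exactly as in Propositions \ref{3q}, \ref{lifting} and \ref{tpl}.
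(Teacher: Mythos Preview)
Your proposal is correct and follows essentially the same approach as the paper's proof: trivialize on an affine cover, package the $\epsilon$- (respectively $t$-) coefficients of the lifted transition and connection data as a \v{C}ech $1$-cocycle (respectively $2$-cocycle) in the total complex of $\mathscr{H}om(F,F)^\bullet$, using \eqref{ve7}, \eqref{ve8}, \eqref{ve9} to verify closedness and independence of choices. The only slips are cosmetic: since $W^\alpha\in C^0(\mathcal{U},T_X\otimes\mathscr{H}om(F,F))$ it is $v_1$ (not $v_0$) that kills it, and with the paper's sign conventions the $1$-cocycle is $(\{-W^\alpha\},\{G^{\alpha\beta}\})$ rather than $(\{W^\alpha\},\{G^{\alpha\beta}\})$.
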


\begin{proof}
Let $\mathcal{U}=\{U_\alpha\}$ be an open covering of $X$ such that $F|_{U_\alpha}\cong \oplus_{i=1}^n \mathcal{O}_X|_{U_\alpha} $ as in Remark \ref{ve1}. We keep the notations in Remark \ref{ve1} and Remark \ref{ve5}. Let $(\mathcal{F},\nabla_{\mathcal{F}})$ be a first-order deformation of $(F,\nabla)$ over $k[\epsilon]$. Then $\mathcal{F}|_{U_\alpha}\cong \oplus_{i=1}^n \mathcal{O}_{X\times_{Spec(k)}Spec(k[\epsilon])}|_{U_\alpha}=\oplus_{i=1}^n \mathcal{O}_X|_{U_\alpha}\otimes k[\epsilon]$ with transition matrices $\{F^{\alpha\beta}+\epsilon G^{\alpha\beta}\}$ for some matrix $G^{\alpha\beta}:=(g_{ij}^{\alpha\beta}), g_{ij}^{\alpha\beta}\in \Gamma(U_{\alpha\beta},\mathcal{O}_X)$ and $\nabla_\mathcal{F}(e_j^\alpha)=\sum_{i=1}^n (T_{ij}^\alpha+\epsilon W_{ij}^\alpha) e_i^\alpha$ for some $W_{ij}^\alpha \in \Gamma(U_\alpha, T_X)$. Then $\{G^{\alpha\beta}\}$ can be considered as an element of $C^1(\mathcal{U},\mathscr{H}om(F,F))$. Let $W^\alpha$ be the $\Gamma(U_\alpha, T_X)$-valued matrix whose $(i,j)$ component is $W_{ij}^\alpha$. In other words, $W^\alpha=\sum_{i,j=1}^nW_{ij}^\alpha E_{ij}^\alpha$. Then $\{W^\alpha\}$ can be considered as an element of $C^0(\mathcal{U},T_X\otimes\mathscr{H}om(F,F))$. We claim that $(\{-W^\alpha\},\{G^{\alpha\beta}\})\in C^0(\mathcal{U},T_X\otimes\mathscr{H}om(F,F))\oplus C^1(\mathcal{U},\mathscr{H}om(F,F))$ defines a $1$-cocycle in the following \v{C}ech resolution. 
\begin{center}
$\begin{CD}
\mathcal{C}^0(\mathcal{U},\wedge^3 T_X\otimes\mathscr{H}om(F,F))\\
@Av_2AA\\
\mathcal{C}^0(\mathcal{U},\wedge^2 T_X\otimes\mathscr{H}om(F,F))@>-\delta>>\mathcal{C}^1(\mathcal{U},\wedge^2 T_X\otimes\mathscr{H}om(F,F))\\
@Av_1AA @Av_1AA\\
C^0(\mathcal{U},T_X\otimes\mathscr{H}om(F,F))@>\delta>> C^1(\mathcal{U},T_X\otimes\mathscr{H}om(F,F))@>-\delta>>\mathcal{C}^2(\mathcal{U},T_X\otimes\mathscr{H}om(F,F))\\
@Av_0=\nabla_{\mathscr{H}om(F,F)} AA @Av_0AA @Av_0AA\\
C^0(\mathcal{U},\mathscr{H}om(F,F))@>-\delta>>C^1(\mathcal{U},\mathscr{H}om(F,F))@>\delta>>C^2(\mathcal{U},\mathscr{H}om(F,F))@>-\delta>>
\end{CD}$
\end{center}
Indeed, $v_1(W^\alpha)=v_1(\sum_{i,j=1}^n W_{ij}^\alpha E_{ij}^\alpha)= \sum_{i,j=1}^n [\Lambda_0,W^\alpha_{ij}]\cdot E_{ij}^\alpha-\sum_{i,j=1}^n W_{ij}^\alpha\wedge v_0(E_{ij}^\alpha)=\sum_{i,j=1}^n [\Lambda_0,W^\alpha_{ij}]\cdot E_{ij}^\alpha-\sum_{i,j,k=1}^n W_{ij}^\alpha\wedge T_{ki}^\alpha E_{kj}^\alpha+\sum_{i,j,k=1}^n W_{ij}^\alpha\wedge T_{jk}^\alpha E_{ik}^\alpha$ by (\ref{ve7}). On the other hand, since $[\Lambda_0,T_{ij}^\alpha+\epsilon W_{ij}^\alpha]+\sum_{k=1}^n (T_{ik}^\alpha+\epsilon W_{ik}^\alpha)\wedge (T_{kj}^\alpha+\epsilon W_{kj}^\alpha)=0$ by (\ref{ve8}), we have 
\begin{align*}
[\Lambda_0,W_{ij}^\alpha]+\sum_{k=1}^n T_{ik}^\alpha\wedge W_{kj}^\alpha+\sum_{k=1}^n W^\alpha_{ik}\wedge T_{kj}^\alpha=0
\end{align*}
Hence $v_1(W^\alpha)=0$ so that $v_1(\{-W^\alpha\})=0$.

Next we will show $\delta(\{-W^\alpha\})+v_0(\{G^{\alpha\beta}\})=0$. Let us consider $F^{\beta\alpha}=(F_{ij}^{\beta\alpha})$ so that $F^{\beta\alpha} F^{\alpha\beta}=I_n$. First $\delta(\{-W^\alpha\})=\{-W^\beta+ F^{\beta\alpha}W^\alpha F^{\alpha\beta}\}$ since $W^{\alpha}:\oplus_{i=1}^n \mathcal{O}_X|_{U_\alpha}\to \oplus_{i=1}^n T_X|_{U_\alpha}$ becomes $F^{\beta\alpha}W^{\alpha}F^{\alpha\beta}$ in terms of $\oplus_{i=1}^n\mathcal{O}_X|_{U_\beta}\to \oplus_{i=1}^n T_X|_{U_\beta}$. Second let us compute $v_0(\{G^{\alpha\beta}\})$. We note that $G^{\alpha\beta}:\oplus_{i=1}^n\mathcal{O}_X|_{U_\beta}\to \oplus_{i=1}^n \mathcal{O}_X|_{U_\alpha}$ becomes $F^{\beta\alpha}G^{\alpha\beta}$ In terms of $\oplus_{i=1}^n \mathcal{O}_X|_{U_\beta} \to \oplus_{i=1}^n \mathcal{O}_X|_{U_\beta}$. Then from (\ref{ve7}),
\begin{align*}
&v_0(F^{\beta\alpha}G^{\alpha\beta})=v_0(\sum_{i,j,k=1}^n F_{ik}^{\beta\alpha}g^{\alpha\beta}_{kj} E_{ij}^\beta)=\sum_{i,j,k=1}^n -[\Lambda_0, F^{\beta\alpha}_{ik}g^{\alpha\beta}_{kj}]E_{ij}^\beta+\sum_{i,j,k=1}^n F^{\beta\alpha}_{ik}g^{\alpha\beta}_{kj}v_0(E_{ij}^\beta)\\
&=\sum_{i,j,k=1}^n -F^{\beta\alpha}_{ik}[\Lambda_0,g_{kj}^{\alpha\beta}]E_{ij}^\beta+\sum_{i,j,k=1}^n -g_{kj}^{\alpha\beta}[\Lambda_0, F_{ik}^{\beta\alpha}]E_{ij}^\beta+\sum_{i,j,k,p=1}^n F_{ik}^{\beta\alpha}g^{\alpha\beta}_{kj}T_{pi}^\beta E_{pj}^\beta-\sum_{i,j,k,p=1}^nF_{ik}^{\beta\alpha}g^{\alpha\beta}_{kj}T_{jp}^\beta E_{ip}^\beta.
\end{align*}
Then the coefficient of $E_{ij}^\beta$ is as follows (here we use $-[\Lambda_0, F_{ik}^{\beta\alpha}]+\sum_{p=1}^n  T_{ip}^\beta F_{pk}^{\beta\alpha}=\sum_{p=1}^n F_{ip}^{\beta\alpha}T_{pk}^\alpha$ from (\ref{ve9})).
\begin{align*}
&-\sum_{k=1}^n  F_{ik}^{\beta\alpha}[\Lambda_0, g^{\alpha\beta}_{kj}]-\sum_{k=1}^n g^{\alpha\beta}_{kj}[\Lambda_0, F_{ik}^{\beta\alpha}]+\sum_{p,k=1}^n F^{\beta\alpha}_{pk}g^{\alpha\beta}_{kj}T_{ip}^\beta-\sum_{p,k=1}^n F^{\beta\alpha}_{ik}g^{\alpha\beta}_{kp}T_{pj}^\beta\\
&=-\sum_{k=1}^n  F^{\beta\alpha}_{ik}[\Lambda_0, g^{\alpha\beta}_{kj}]+\sum_{p,k=1}^n g^{\alpha\beta}_{kj} F^{\beta\alpha}_{ip}T_{pk}^\alpha-\sum_{p,k=1}^n g^{\alpha\beta}_{kj} T_{ip}^\beta F^{\beta\alpha}_{pk}+\sum_{p,k=1}^n F^{\beta\alpha}_{pk}g^{\alpha\beta}_{kj}T_{ip}^\beta -\sum_{p,k=1}^n F^{\beta\alpha}_{ik}g^{\alpha\beta}_{kp}T_{pj}^\beta \\
&=-\sum_{k=1}^n  F^{\beta\alpha}_{ik}[\Lambda_0, g^{\alpha\beta}_{kj}]+\sum_{p,k=1}^n  F^{\beta\alpha}_{ip} T_{pk}^\alpha  g_{kj}^{\alpha\beta}-\sum_{p,k=1}^n F^{\beta\alpha}_{ik}g^{\alpha\beta}_{kp}T_{pj}^\beta
\end{align*}

Hence 
\begin{align}\label{ve11}
&\delta(\{-W^\alpha\})+v_0(\{G^{\alpha\beta}\})\\
&=\{(-W_{ij}^\beta+\sum_{p,k=1}^n F^{\beta\alpha}_{ik}W_{kp}^\alpha F^{\alpha\beta}_{pj} -\sum_{k=1}^n F_{ik}^{\beta\alpha}[\Lambda_0, g_{kj}^{\alpha\beta}]+\sum_{p,k=1}^n  F^{\beta\alpha}_{ip} T_{pk}^\alpha  g_{kj}^{\alpha\beta}-\sum_{p,k=1}^n F^{\beta\alpha}_{ik}g^{\alpha\beta}_{kp}T_{pj}^\beta) \}\notag
\end{align}
On the other hand, from (\ref{ve9}), we have $-[\Lambda_0,F^{\alpha\beta}_{kj}+\epsilon g_{kj}^{\alpha\beta}]+\sum_{p=1}^n (T_{kp}^\alpha+\epsilon W_{kp}^\alpha)( F_{pj}^{\alpha\beta}+\epsilon g_{pj}^{\alpha\beta}) =\sum_{p=1}^n  (F^{\alpha\beta}_{kp}+\epsilon g^{\alpha\beta}_{kp})(T_{pj}^\beta+\epsilon W_{pj}^\beta)$ so that we have
\begin{align*}
-[\Lambda_0, g_{kj}^{\alpha\beta}]+\sum_{p=1}^n T_{kp}^\alpha g_{pj}^{\alpha\beta}+\sum_{p=1}^n W_{kp}^\alpha F_{pj}^{\alpha\beta}=\sum_{p=1}^n F_{kp}^{\alpha\beta} W_{pj}^\beta+\sum_{p=1}^n g_{kp}^{\alpha\beta} T_{pj}^\beta.
\end{align*}

By multiplying $\sum_{k=1}^n F_{ik}^{\beta\alpha}$, we get
\begin{align*}
-\sum_{k=1}^n F^{\beta\alpha}_{ik}[\Lambda_0, g^{\alpha\beta}_{kj}]+\sum_{k,p=1}^n F^{\beta\alpha}_{ik}T_{kp}^\alpha g^{\alpha\beta}_{pj}+\sum_{k,p=1}^n F^{\beta\alpha}_{ik}W_{kp}^\alpha F^{\alpha\beta}_{pj}-W_{ij}^\beta-\sum_{k,p=1}^n F^{\beta\alpha}_{ik} g^{\alpha\beta}_{kp} T_{pj}^\beta=0
\end{align*}

From (\ref{ve11}), we have $\delta(\{-W^\alpha\})+v_0(\{G^{\alpha\beta}\})=0$.

Lastly, $\delta(\{G^{\alpha\beta}\})=\{F^{\alpha\beta} G^{\beta\gamma}-G^{\alpha\gamma}+G^{\alpha\beta}F^{\beta\gamma}\}$. Since $(F^{\alpha\beta}+\epsilon G^{\alpha\beta})(F^{\beta\gamma}+\epsilon G^{\beta\gamma})=F^{\alpha\gamma}+\epsilon G^{\alpha\gamma}$, we have $F^{\alpha\beta}G^{\beta\gamma}+G^{\alpha\beta} F^{\beta\gamma}= G^{\alpha\gamma}$ so that $\delta(\{G^{\alpha\beta}\})=0$. Hence $(\{-W^{\alpha}\},\{G^{\alpha\beta}\})\in C^0(\mathcal{U},T_X\otimes \mathscr{H}om(F,F))\oplus C^1(\mathcal{U},\mathscr{H}om(F,F))$ defines a $1$-cocycle in the above \v{C}ech resolution.

Assume that we have two equivalent first-order deformations $(\mathcal{F},\nabla_\mathcal{F})$ and $(\mathcal{F}',\nabla_{\mathcal{F}'})$ of $(F,\nabla)$ over $Spec(k[\epsilon])$, which are represented by $(\{F^{\alpha\beta}+\epsilon G^{\alpha\beta}=(F_{ij}^{\alpha\beta}+\epsilon g_{ij}^{\alpha\beta})\},\{T^\alpha+\epsilon W^\alpha=(T_{ij}^{\alpha}+\epsilon W_{ij}^\alpha)\})$ and $(\{F^{\alpha\beta}+\epsilon G'^{\alpha\beta}=(F_{ij}^{\alpha\beta}+\epsilon {g'}_{ij}^{\alpha\beta})\},\{T^\alpha+\epsilon W'^\alpha=( T_i^\alpha+\epsilon W'^\alpha_i)\})$ respectively so that there exist matrixces $\{A^\alpha:=(a_{ij}^\alpha)| a_{ij}^\alpha\in \Gamma(U_\alpha, \mathcal{O}_X)\}$ which define an element of $C^0(\mathcal{U},\mathscr{H}om(F,F))$ such that $(I+\epsilon A^\alpha)(F^{\alpha\beta}+\epsilon G^{\alpha\beta})=(F^{\alpha\beta}+\epsilon G'^{\alpha\beta})(I+\epsilon A^\beta)$. Then $A^\alpha F^{\alpha\beta}+G^{\alpha\beta}=G'^{\alpha\beta}+F^{\alpha\beta}A^\beta$ so that $-\delta(\{A^\alpha\})=(\{G'^{\alpha\beta}-G^{\alpha\beta}\})\in C^1(\mathcal{U},\mathscr{H}om(F,F))$. On the other hand, since $Id+\epsilon A^\alpha$ define a morphism of Poisson $\mathcal{O}_X$-modules from $(\mathcal{F},\nabla_\mathcal{F})$ to $(\mathcal{F}',\nabla_{\mathcal{F}'})$, we have $-[\Lambda_0,\delta_{ij}+\epsilon a_{ij}^{\alpha}]+\sum_{k=1}^n (T_{ik}^\alpha+\epsilon {W'}_{ik}^\alpha)(\delta_{kj}+\epsilon a_{kj}^\alpha)=\sum_{k=1}^n (\delta_{ik}+\epsilon a_{ik}^\alpha)(T_{kj}^\alpha+\epsilon W_{kj}^\alpha)$. Then  $-[\Lambda_0, a_{ij}^\alpha]+\sum_{k=1}^n T_{ik}^\alpha a_{kj}^\alpha +{W'}_{ij}^\alpha=\sum_{k=1}^n a_{ik}^\alpha T_{kj}^\alpha+W_{ij}^\alpha$ so that $v_0(\{A^\alpha\})=(\{W^\alpha-W'^\alpha\})$. Hence $a=(\{-W^\alpha\}, \{G^{\alpha\beta}\})$ is cohomologous to $b=(\{-W'^{\alpha}\},\{G'^{\alpha\beta}\})$. This proves $(1)$ in Proposition \ref{ve2}. 

Now we identify obstructions. Let us consider a small extension $e:0\to (t)\to \tilde{A}\to A\to 0$ in $\bold{Art}$ and let $\eta=(\mathcal{F},\nabla_\mathcal{F})$ be an infinitesimal deformation of $(F,\nabla)$ over $Spec(A)$. Let $\mathcal{U}=\{U_\alpha\}$ be an open covering  of $X$ such that $(\mathcal{F},\nabla_\mathcal{F})$ be represented by transition matrices $\{(H_{ij}^{\alpha\beta})\}$ with $H_{ij}^{\alpha\beta}\in \Gamma(U_{ij},\mathcal{O}_{X\times Spec(A)}) $ and $\sum_{j=1}^n H_{ij}^{\alpha\beta}H_{jk}^{\beta\gamma}=H_{ik}^{\alpha\gamma}$, and $\{(Y_{ij}^\alpha)\}$ where $Y_{ij}^\alpha\in \Gamma(U_\alpha, T_X)\otimes A$, $[\Lambda_0,Y_{ij}^\alpha]+\sum_{k=1}^n Y_{ik}^\alpha\wedge Y_{kj}^\alpha=0$, and $-[\Lambda_0,H_{ij}^{\alpha\beta}]+\sum_{k=1}^n Y_{ik}^\alpha H_{kj}^{\alpha\beta}=\sum_{k=1}^n H_{ik}^{\alpha\beta}Y_{kj}^\beta$. In order to see if a lifting $\tilde{\eta}$ of $\eta$ to $Spec(\tilde{A})$ exists, we choose an arbitrary collection $\{\tilde{F}_{ij}^{\alpha\beta}\}$ and $\{\tilde{Y}_{ij}^\alpha\}$ where $\tilde{F}_{ij}^{\alpha\beta}\in \Gamma(U_{\alpha\beta},\mathcal{O}_{X\times Spec(\tilde{A})})$ restricts to $H_{ij}^{\alpha\beta}$, and $\tilde{Y}_{ij}^{\alpha}\in \Gamma(U_{\alpha},T_X)\otimes \tilde{A}$ restricts to $Y_{ij}^{\alpha}$.  

Let $[\Lambda_0, \tilde{Y}_{ij}^\alpha]+\sum_{k=1}^n \tilde{Y}_{ik}^\alpha\wedge \tilde{Y}_{kj}^\alpha=tS_{ij}^\alpha$, where $S_{ij}^\alpha\in \Gamma(U_\alpha, \wedge^2 T_X)$. Then the $\Gamma(U_\alpha, \wedge^2 T_X)$-valued matrices $S^\alpha:=(S_{ij}^\alpha)$ define an element in $C^0(\mathcal{U},\wedge^2 T_X\otimes \mathscr{H}om(F,F))$ in terms of $\oplus_{i=1}^n \mathcal{O}_X|_{U_\alpha}\to \oplus_{i=1}^n \wedge^2 T_X|_{U_\alpha}$. Let $-[\Lambda_0, \tilde{F}_{ij}^{\alpha\beta}]+\sum_{k=1}^n \tilde{Y}_{ik}^\alpha \tilde{F}_{kj}^{\alpha\beta}-\sum_{k=1}^n \tilde{F}_{ik}^{\alpha\beta} \tilde{Y}_{kj}^\beta=tQ_{ij}^{\alpha\beta}$ where $Q_{ij}^{\alpha\beta}\in \Gamma(U_{\alpha\beta},T_X)$. Then the $\Gamma(U_{\alpha\beta},T_X)$-valued matrices $Q^{\alpha\beta}:=(Q_{ij}^{\alpha\beta})$ define an element in $C^1(\mathcal{U},T_X\otimes\mathscr{H}om(F,F))$ in terms of $\oplus_{i=1}^n \mathcal{O}_X|_{U_\beta}\to \oplus_{i=1}^n T_X|_{U_\alpha}$. Let $\sum_{k,p=1}^n\tilde{F}_{ik}^{\alpha\beta}\tilde{F}_{kp}^{\beta\gamma}\tilde{F}_{pj}^{\gamma\alpha}=\delta_{ij}+tg_{ij}^{\alpha\beta\gamma}$ where $g_{ij}^{\alpha\beta\gamma}\in \Gamma(U_{\alpha\beta\gamma},\mathcal{O}_X)$. Then the $\Gamma(U_{\alpha\beta\gamma},\mathcal{O}_X)$-valued matrices $G^{\alpha\beta\gamma}:=(g_{ijk}^{\alpha\beta\gamma})$ define an element in $C^2(\mathcal{U},\mathscr{H}om(F,F))$ in terms of $\oplus_{i=1}^n \mathcal{O}_X|_{U_\alpha}\to \oplus_{i=1}^n \mathcal{O}_X|_{U_\alpha}$. Putting $\tilde{F}^{\alpha\beta}:=(\tilde{F}_{ij}^{\alpha\beta})$, we have $\tilde{F}^{\beta\alpha}\tilde{F}^{\alpha\beta}=I_n,\tilde{F}^{\alpha\beta}\tilde{F}^{\beta\gamma}\tilde{F}^{\gamma\alpha}=I_n+t G^{\alpha\beta\gamma}$ and $\tilde{F}^{\alpha\gamma}\tilde{F}^{\gamma\beta}\tilde{F}^{\beta\alpha}=I_n-tG^{\alpha\beta\gamma}$.

We claim that $(\{-S^\alpha\},\{Q^{\alpha\beta}\},\{G^{\alpha\beta\gamma}\})\in C^0(\mathcal{U},\wedge^2 T_X\otimes \mathscr{H}om(F,F))\oplus C^1(\mathcal{U},T_X\otimes\mathscr{H}om(F,F))\oplus C^2(\mathcal{U},\mathscr{H}om(F,F))$ defines an $2$-cocycle in the above \v{C}ech resolution.

First, we show that $v_2(\{S^{\alpha}\})=0$, equivalently $v_2(tS^\alpha)=0$. Indeed, from (\ref{ve7}), 
\begin{align*}
&v_2(tS^\alpha)=v_2(\sum_{i,j=1}^n tS_{ij}^\alpha E_{ij}^\alpha)=\sum_{i,j=1}^n -[\Lambda_0, tS_{ij}^\alpha]E_{ij}^\alpha+\sum_{i,j=1}^ntS_{ij}^\alpha\wedge v_0(E_{ij}^\alpha)\\
&=\sum_{i,j,k=1}^n -[\Lambda_0,\tilde{Y}_{ik}^\alpha\wedge \tilde{Y}_{kj}^\alpha]E_{ij}^\alpha+\sum_{i,j,p=1}^n tS_{ij}^\alpha\wedge T_{pi}^\alpha E_{pj}^\alpha-\sum_{i,j,p=1}^n tS_{ij}^\alpha \wedge T_{jp}^\alpha E_{ip}^\alpha\\
&=\sum_{i,j,k=1}^n -\tilde{Y}_{ik}^\alpha \wedge [\Lambda_0, \tilde{Y}_{kj}^\alpha] E_{ij}^\alpha+\sum_{i,j,k=1}^n[\Lambda_0,\tilde{Y}_{ik}^\alpha]\wedge \tilde{Y}_{kj}^\alpha E_{ij}^\alpha+\sum_{i,j,p=1}^n tS_{pj}^\alpha\wedge T_{ip}^\alpha E_{ij}^\alpha-\sum_{i,j,p=1}^n tS_{ip}^\alpha\wedge T_{pj}^\alpha E_{ij}^\alpha\\
&=\sum_{i,j,k=1}^n -\tilde{Y}_{ik}^\alpha\wedge (tS_{kj}^\alpha-\sum_{p=1}^n \tilde{Y}_{kp}^\alpha\wedge \tilde{Y}_{pj}^\alpha)E_{ij}^\alpha+\sum_{i,j,k=1}^n (tS_{ik}^\alpha-\sum_{p=1}^n \tilde{Y}_{ip}^\alpha\wedge \tilde{Y}_{pk}^\alpha)\wedge\tilde{Y}_{kj}^\alpha E_{ij}^\alpha\\
&\,\,\,\,\,\,\,\,\,\,\,\,\,\,\,\,\,\,+\sum_{i,j,k=1}^n T_{ik}^\alpha \wedge tS_{kj}^\alpha  E_{ij}^\alpha-\sum_{i,j,k=1}^n tS_{ik}^\alpha\wedge T_{kj}^\alpha E_{ij}^\alpha=0\\
&=\sum_{i,j,k,=1}^n -\tilde{Y}_{ik}^\alpha\wedge tS_{kj}^\alpha E_{ij}^\alpha+\sum_{i,j,k=1}^n tS^\alpha_{ik}\wedge \tilde{Y}_{kj}^\alpha E_{ij}^\alpha+\sum_{i,j,k=1}^n T_{ik}^\alpha \wedge tS_{kj}^\alpha  E_{ij}^\alpha-\sum_{i,j,k=1}^n tS_{ik}^\alpha\wedge T_{kj}^\alpha E_{ij}^\alpha=0.
\end{align*}

Second, we show that $-\delta(\{-S^\alpha\})+v_1(\{Q^{\alpha\beta}\})=0$. We compute  $-\delta(\{S^\alpha\})=\{F^{\beta\alpha}S^\alpha F^{\alpha\beta}-S^\beta \}$ in terms of $\oplus_{i=1}^n \mathcal{O}_X|_{U_\beta}\to \oplus_{i=1}^n \wedge^2 T_X|_{U_\beta}$.
For this, we compute $-\delta(\{tS^\alpha\})=\{tS^{\alpha}\tilde{F}^{\alpha\beta}-\tilde{F}^{\alpha\beta}tS^\beta\}$ in terms of $\oplus_{i=1}^n \mathcal{O}_X|_{U_\beta}\to \oplus_{i=1}^n \wedge^2 T_X|_{U_\alpha}$ in the following (here we use that $[\Lambda_0, tQ_{rj}^{\alpha\beta}]=\sum_{k=1}^n [\Lambda_0, \tilde{Y}_{rk}^{\alpha}\tilde{F}_{kj}^{\alpha\beta}]-\sum_{k=1}^n [\Lambda_0, \tilde{F}_{rk}^{\alpha\beta}\tilde{Y}_{kj}^{\beta}]$).

\begin{align*}
&\sum_{k=1}^n tS_{rk}^\alpha \tilde{F}_{kj}^{\alpha\beta}-\sum_{k=1}^n \tilde{F}_{rk}^{\alpha\beta} tS_{kj}^\beta\\
&=\sum_{k=1}^n [\Lambda_0,\tilde{Y}_{rk}^\alpha]\tilde{F}_{kj}^{\alpha\beta}+\sum_{k,p=1}^n \tilde{Y}_{rp}^\alpha\wedge(\tilde{Y}_{pk}^\alpha\tilde{F}_{kj}^{\alpha\beta})-\sum_{k,p=1}^n\tilde{F}_{rk}^{\alpha\beta}[\Lambda_0,\tilde{Y}_{kj}^\beta]-\sum_{k,p=1}^n(\tilde{F}_{rk}^{\alpha\beta}\tilde{Y}_{kp}^\beta)\wedge \tilde{Y}_{pj}^\beta\\
&=\sum_{k=1}^n [\Lambda_0,\tilde{Y}_{rk}^\alpha]\tilde{F}_{kj}^{\alpha\beta}+\sum_{p=1}^n\tilde{Y}_{rp}^\alpha\wedge[\Lambda_0, \tilde{F}_{pj}^{\alpha\beta}]+\sum_{p,k=1}^n \tilde{Y}_{rp}^\alpha\wedge \tilde{F}_{pk}^{\alpha\beta}\tilde{Y}_{kj}^{\beta}+t\sum_{p=1}^n T_{rp}^\alpha\wedge Q_{pj}^{\alpha\beta}\\
&-\sum_{k,p=1}^n\tilde{F}_{rk}^{\alpha\beta}[\Lambda_0,\tilde{Y}_{kj}^\beta]+\sum_{p=1}^n[\Lambda_0,\tilde{F}_{rp}^{\alpha\beta}]\wedge \tilde{Y}_{pj}^\beta-\sum_{k,p=1}^n(\tilde{Y}_{rk}^\alpha\tilde{F}_{kp}^{\alpha\beta})\wedge \tilde{Y}_{pj}^\beta+t\sum_{p=1}^n Q_{rp}^{\alpha\beta}\wedge T_{pj}^\beta\\
&=\sum_{k=1}^n [\Lambda_0,\tilde{Y}_{rk}^{\alpha}]\tilde{F}_{kj}^{\alpha\beta}+\sum_{p=1}^n\tilde{Y}_{rp}^\alpha\wedge[\Lambda_0, \tilde{F}_{pj}^{\alpha\beta}]+t\sum_{p=1}^n T_{rp}^\alpha\wedge Q_{pj}^{\alpha\beta}-\sum_{k,p=1}^n\tilde{F}_{rk}^{\alpha\beta}[\Lambda_0,\tilde{Y}_{kj}^\beta]+\sum_{p=1}^n[\Lambda_0,\tilde{F}_{rp}^{\alpha\beta}]\wedge \tilde{Y}_{pj}^\beta\\&+t\sum_{p=1}^n Q_{rp}^{\alpha\beta}\wedge T_{pj}^\beta=[\Lambda_0, tQ_{rj}^{\alpha\beta}]+t\sum_{p=1}^n T_{rp}^\alpha\wedge Q_{pj}^{\alpha\beta}+t\sum_{p=1}^n Q_{rp}^{\alpha\beta}\wedge T_{pj}^\beta
\end{align*}
Hence we have $\sum_{k=1}^n S_{rk}^\alpha F_{kj}^{\alpha\beta}-\sum_{k=1}^n F_{rk}^{\alpha\beta} S_{kj}^\beta=[\Lambda_0, Q_{rj}^{\alpha\beta}]+\sum_{p=1}^n T_{rp}^\alpha\wedge Q_{pj}^{\alpha\beta}+\sum_{p=1}^n Q_{rp}^{\alpha\beta}\wedge T_{pj}^\beta.$ By multiplying $\sum_{r=1}^n F_{ir}^{\beta\alpha}$, we have
\begin{align}\label{ve13}
\sum_{r,k=1}^n F^{\beta\alpha}_{ir}S_{rk}^\alpha F_{kj}^{\alpha\beta}- S_{ij}^\beta=\sum_{r=1}^n F_{ir}^{\beta\alpha}[\Lambda_0, Q_{rj}^{\alpha\beta}]+\sum_{r,p=1}^n F_{ir}^{\beta\alpha}T_{rp}^\alpha\wedge Q_{pj}^{\alpha\beta}+\sum_{r,p=1}^n F_{ir}^{\beta\alpha}Q_{rp}^{\alpha\beta}\wedge T_{pj}^\beta
\end{align}

On the other hand, $Q^{\alpha\beta}$ becomes $F^{\beta\alpha}Q^{\alpha\beta}$ in terms of $\oplus_{i=1}^n\mathcal{O}_X|_{U_\beta}\to \oplus_{i=1}^n T_X|_{U_\beta}$ so that from (\ref{ve7}), we have 
\begin{align*}
&v_1(F^{\beta\alpha}Q^{\alpha\beta})=v_1(\sum_{i,j,k=1}^n F_{ik}^{\beta\alpha}Q^{\alpha\beta}_{kj} E_{ij}^\beta)\\
&=\sum_{i,j,k=1}^n [\Lambda_0, F^{\beta\alpha}_{ik}Q^{\alpha\beta}_{kj}]E_{ij}^\beta+(-1)^1\sum_{i,j,k=1}^n F^{\beta\alpha}_{ik}Q^{\alpha\beta}_{kj}\wedge v_0(E_{ij}^\beta)\\
&=\sum_{i,j,k=1}^n F^{\beta\alpha}_{ik}[\Lambda_0,Q_{kj}^{\alpha\beta}]E_{ij}^\beta+\sum_{i,j,k=1}^n Q_{kj}^{\alpha\beta}\wedge [\Lambda_0, F_{ik}^{\beta\alpha}]E_{ij}^\beta-\sum_{i,j,k,p=1}^n (F_{ik}^{\beta\alpha}Q^{\alpha\beta}_{kj}\wedge T_{pi}^\beta) E_{pj}^\beta+\sum_{i,j,k,p=1}^n(F_{ik}^{\beta\alpha}Q^{\alpha\beta}_{kj}\wedge T_{jp}^\beta) E_{ip}^\beta.
\end{align*}
Then the coefficient of $E_{ij}^\beta$ is as follows (here we use $-[\Lambda_0, F_{ik}^{\beta\alpha}]+\sum_{p=1}^n  T_{ip}^\beta F_{pk}^{\beta\alpha}=\sum_{p=1}^n F_{ip}^{\beta\alpha}T_{pk}^\alpha$ from (\ref{ve9})).
\begin{align}\label{ve12}
&\sum_{k=1}^n  F_{ik}^{\beta\alpha}[\Lambda_0, Q^{\alpha\beta}_{kj}]+\sum_{k=1}^n Q^{\alpha\beta}_{kj}\wedge [\Lambda_0, F_{ik}^{\beta\alpha}]-\sum_{p,k=1}^n F^{\beta\alpha}_{pk}Q^{\alpha\beta}_{kj}\wedge T_{ip}^\beta+\sum_{p,k=1}^n F^{\beta\alpha}_{ik}Q^{\alpha\beta}_{kp}\wedge T_{pj}^\beta\\\notag
&=\sum_{k=1}^n  F^{\beta\alpha}_{ik}[\Lambda_0, Q^{\alpha\beta}_{kj}]-\sum_{p,k=1}^n Q^{\alpha\beta}_{kj} \wedge F^{\beta\alpha}_{ip}T_{pk}^\alpha+\sum_{p,k=1}^n Q^{\alpha\beta}_{kj} \wedge T_{ip}^\beta F^{\beta\alpha}_{pk}-\sum_{p,k=1}^n F^{\beta\alpha}_{pk}Q^{\alpha\beta}_{kj}\wedge T_{ip}^\beta +\sum_{p,k=1}^n F^{\beta\alpha}_{ik}Q^{\alpha\beta}_{kp}\wedge T_{pj}^\beta \\\notag
&=\sum_{k=1}^n  F^{\beta\alpha}_{ik}[\Lambda_0, Q^{\alpha\beta}_{kj}]+\sum_{p,k=1}^n  F^{\beta\alpha}_{ip} T_{pk}^\alpha \wedge Q_{kj}^{\alpha\beta}+\sum_{p,k=1}^n F^{\beta\alpha}_{ik}Q^{\alpha\beta}_{kp}\wedge T_{pj}^\beta
\end{align}

Hence from (\ref{ve13}) and (\ref{ve12}), we have $-\delta(\{-S^\alpha\})+v_1(\{Q^{\alpha\beta}\})=0$.

Next we show that $v_0(\{G^{\alpha\beta\gamma}\})-\delta(\{Q^{\alpha\beta}\})=0$, equivalently $v_0(tG^{\alpha\beta\gamma})=\delta(tQ^{\alpha\beta})$. Indeed, from (\ref{ve7}),
\begin{align*}
v_0(tG^{\alpha\beta})=v_0(\sum_{i,j=1}^n tg_{ij}^{\alpha\beta\gamma}E_{ij}^\alpha)&=\sum_{i,j=1}^n-[\Lambda_0, tg_{ij}^{\alpha\beta\gamma}]E_{ij}^\alpha+\sum_{i,j=1}^ntg_{ij}^{\alpha\beta\gamma}v_0(E_{ij}^\alpha)\\
&=\sum_{i,j=1}^n-[\Lambda_0, tg_{ij}^{\alpha\beta\gamma}]E_{ij}^\alpha+\sum_{i,j,k=1}^n tg_{ij}^{\alpha\beta\gamma} T_{ki}^\alpha E_{kj}^\alpha-\sum_{i,j,k=1}^n tg_{ij}^{\alpha\beta\gamma}T_{jk}^\alpha E_{ik}^\alpha\\
&=\sum_{i,j=1}^n-[\Lambda_0, tg_{ij}^{\alpha\beta\gamma}]E_{ij}^\alpha+\sum_{i,j,k=1}^n tg_{ij}^{\alpha\beta\gamma} \tilde{Y}_{ki}^\alpha E_{kj}^\alpha-\sum_{i,j,k=1}^n tg_{ij}^{\alpha\beta\gamma}\tilde{Y}_{jk}^\alpha E_{ik}^\alpha
\end{align*}
The coefficient of $E_{ij}^\alpha$ is $-[\Lambda_0, tg_{ij}^{\alpha\beta\gamma}]+\sum_{k=1}^n tg_{kj}^{\alpha\beta\gamma} \tilde{Y}_{ik}^\alpha -\sum_{k=1}^n g_{ik}^{\alpha\beta\gamma}\tilde{Y}_{kj}^\alpha$. Then
\begin{align*}
&-[\Lambda_0, tg_{ij}^{\alpha\beta\gamma}]+\sum_{k=1}^n \tilde{Y}_{ik}^\alpha tg_{kj}^{\alpha\beta\gamma} -\sum_{k=1}^n tg_{ik}^{\alpha\beta\gamma}\tilde{Y}_{kj}^\alpha\\
&=-[\Lambda_0, \sum_{k,p=1}^n\tilde{F}_{ik}^{\alpha\beta}\tilde{F}_{kp}^{\beta\gamma}\tilde{F}_{pj}^{\gamma\alpha}]+\sum_{k,p,r=1}^n \tilde{Y}_{ik}^\alpha\tilde{F}_{kp}^{\alpha\beta} \tilde{F}_{pr}^{\beta\gamma}\tilde{F}_{rj}^{\gamma\alpha}-\sum_{k=1}^n \tilde{Y}_{ik}^\alpha \delta_{kj} -\sum_{k,p,r=1}^n \tilde{F}_{ik}^{\alpha\beta}\tilde{F}_{kp}^{\beta\gamma}\tilde{F}_{pr}^{\gamma\alpha} \tilde{Y}_{rj}^\alpha+\sum_{k=1}^n\delta_{ik} \tilde{Y}_{kj}^{\alpha}\\
&=-[\Lambda_0, \sum_{k,p=1}^n\tilde{F}_{ik}^{\alpha\beta}\tilde{F}_{kp}^{\beta\gamma}\tilde{F}_{pj}^{\gamma\alpha}]+\sum_{k,p,r=1}^n \tilde{Y}_{ik}^\alpha\tilde{F}_{kp}^{\alpha\beta} \tilde{F}_{pr}^{\beta\gamma}\tilde{F}_{rj}^{\gamma\alpha} -\sum_{k,p,r=1}^n \tilde{F}_{ik}^{\alpha\beta}\tilde{F}_{kp}^{\beta\gamma}\tilde{F}_{pr}^{\gamma\alpha} \tilde{Y}_{rj}^\alpha\\
&=-\sum_{k,p,r=1}^n \tilde{Y}_{ir}^\alpha \tilde{F}_{rk}^{\alpha\beta}\tilde{F}_{kp}^{\beta\gamma}\tilde{F}_{pj}^{\gamma\alpha}+\sum_{k,p,r=1}^n \tilde{F}_{ir}^{\alpha\beta}\tilde{Y}_{rk}^\beta\tilde{F}_{kp}^{\beta\gamma}\tilde{F}_{pj}^{\gamma\alpha}+\sum_{k,p=1}^n tQ_{ik}^{\alpha\beta}\tilde{F}_{kp}^{\beta\gamma}\tilde{F}_{pj}^{\gamma\alpha}\\
&-\sum_{k,p,r=1}^n\tilde{F}_{ik}^{\alpha\beta}\tilde{Y}_{kr}^\beta \tilde{F}_{rp}^{\beta\gamma}\tilde{F}_{pj}^{\gamma\alpha}+\sum_{k,p,r=1}^n \tilde{F}_{ik}^{\alpha\beta}\tilde{F}_{kr}^{\beta\gamma}\tilde{Y}_{rp}^\gamma\tilde{F}_{pj}^{\gamma\alpha}+\sum_{k,p=1}^n\tilde{F}_{ik}^{\alpha\beta}tQ_{kp}^{\beta\gamma}\tilde{F}^{\gamma\alpha}_{pj}\\
&-\sum_{r,k,p=1}^n \tilde{F}_{ik}^{\alpha\beta} \tilde{F}_{kp}^{\beta\gamma}\tilde{Y}_{pr}^\gamma \tilde{F}_{rj}^{\gamma\alpha}+\sum_{r,k,p=1}^n \tilde{F}_{ik}^{\alpha\beta} \tilde{F}_{kp}^{\beta\gamma}\tilde{F}_{pr}^{\gamma\alpha}\tilde{Y}_{rj}^{\alpha}+\sum_{p,k=1}^n\tilde{F}^{\alpha\beta}_{ik}\tilde{F}_{kp}^{\beta\gamma}tQ_{pj}^{\gamma\alpha}\\
&+\sum_{k,p,r=1}^n \tilde{Y}_{ik}^\alpha\tilde{F}_{kp}^{\alpha\beta} \tilde{F}_{pr}^{\beta\gamma}\tilde{F}_{rj}^{\gamma\alpha} -\sum_{k,p,r=1}^n \tilde{F}_{ik}^{\alpha\beta}\tilde{F}_{kp}^{\beta\gamma}\tilde{F}_{pr}^{\gamma\alpha} \tilde{Y}_{rj}^\alpha=\sum_{k,p=1}^n tQ_{ik}^{\alpha\beta}\tilde{F}_{kp}^{\beta\gamma}\tilde{F}_{pj}^{\gamma\alpha}+\sum_{k,p=1}^n\tilde{F}_{ik}^{\alpha\beta}tQ_{kp}^{\beta\gamma}\tilde{F}^{\gamma\alpha}_{pj}+\sum_{p,k=1}^n\tilde{F}^{\alpha\beta}_{ik}\tilde{F}_{kp}^{\beta\gamma}tQ_{pj}^{\gamma\alpha}\\
&=\sum_{k=1}^n tQ_{ik}^{\alpha\beta}F_{kj}^{\beta\alpha}+\sum_{p,k=1}^nF_{ik}^{\alpha\beta}tQ_{kp}^{\beta\gamma}F_{pj}^{\gamma\alpha}+\sum_{p=1}^nF_{ip}^{\alpha\gamma}tQ_{pj}^{\gamma\alpha}
\end{align*}
 
On the other hand, $\delta(\{tQ^{\alpha\beta}\})=(\{ tQ^{\alpha\beta} F^{\beta\alpha}+F^{\alpha\beta}tQ^{\beta\gamma}F^{\gamma\alpha}+F^{\alpha\gamma}tQ^{\gamma\alpha}\})$.
Hence we have $-\delta(\{Q^{\alpha\beta}\})+v_0(G^{\alpha\beta\gamma})=0$.

Lastly, $\delta(G^{\alpha\beta\gamma})= F^{\alpha\beta}G^{\beta\gamma\delta}F^{\beta\alpha}-G^{\alpha\gamma\delta}+G^{\alpha\beta\delta}-G^{\alpha\beta\gamma}$. We note that
\begin{align*}
&I_n-tG^{\alpha\gamma\delta}-tG^{\alpha\beta\gamma}+F^{\alpha\beta}tG^{\beta\gamma\delta}tF^{\beta\alpha}+tG^{\alpha\beta\delta}\\
&=(I_n-tG^{\alpha\gamma\delta})(I_n-t G^{\alpha\beta\gamma})(I_n+\tilde{F}^{\alpha\beta}t G^{\beta\gamma\delta}\tilde{F}^{\beta\alpha})(I_n+t G^{\alpha\beta\delta})\\
&=(I_n-tG^{\alpha\gamma\delta})(I_n-t G^{\alpha\beta\gamma})F^{\alpha\beta}(I_n+t G^{\beta\gamma\delta})F^{\beta\alpha}(I_n+t G^{\alpha\beta\delta})\\
&=(\tilde{F}^{\alpha\delta}\tilde{F}^{\delta\gamma}\tilde{F}^{\gamma\alpha})(\tilde{F}^{\alpha\gamma}\tilde{F}^{\gamma\beta}\tilde{F}^{\beta\alpha})\tilde{F}^{\alpha\beta}(\tilde{F}^{\beta\gamma}\tilde{F}^{\gamma\delta}\tilde{F}^{\delta\beta})\tilde{F}^{\beta\alpha}(\tilde{F}^{\alpha\beta}\tilde{F}^{\beta\delta}\tilde{F}^{\delta\alpha})\\
&=\tilde{F}^{\alpha\delta}\tilde{F}^{\delta\gamma}\tilde{F}^{\gamma\beta}\tilde{F}^{\beta\gamma}\tilde{F}^{\gamma\delta}\tilde{F}^{\delta\beta}\tilde{F}^{\beta\delta}\tilde{F}^{\delta\alpha}=\tilde{F}^{\alpha\delta}\tilde{F}^{\delta\gamma}\tilde{F}^{\gamma\delta}\tilde{F}^{\delta\alpha}=\tilde{F}^{\alpha\delta}\tilde{F}^{\delta\alpha}=I_n
\end{align*}
So we have $-\delta(G^{\alpha\beta\gamma})=0$. Hence $a:=(\{-S^\alpha\},\{Q^{\alpha\beta}\},\{G^{\alpha\beta\gamma}\})$ defines a $2$-cocycle in the above \v{C}ech resolution.

Now we claim that given another arbitrary collection $\{\tilde{F}_{ij}^{\alpha\beta}\}$ and $\{\tilde{Y}'^{\alpha}_{ij}\}$, the associated $2$-cocycle $b:=(\{-S'^{\alpha}\},\{Q'^{\alpha\beta}\},\{G'^{\alpha\beta\gamma}\})$ is cohomologous to the $2$-cocycle $a=(\{-S^\alpha\},\{Q^{\alpha\beta}\},\{G^{\alpha\beta\gamma}\})$ associated with $\{\tilde{F}^{\alpha\beta}\}$ and $\{\tilde{Y}^\alpha\}$. Then $\tilde{F}'^{\alpha\beta}=\tilde{F}^{\alpha\beta}+t F'^{\alpha\beta}$ for some $\Gamma(U_{\alpha\beta}, \mathcal{O}_X)$-valued matrices $F'^{\alpha\beta}$, and $\tilde{Y}'^{\alpha}=\tilde{Y}^\alpha+tY'^\alpha$ for some $\Gamma(U_\alpha,T_X)$-valued matrices $Y'^\alpha$. $\{F'^{\alpha\beta}\}$ defines an element in $C^1(\mathcal{U},\mathscr{H}om(F,F))$ and $\{Y'^\alpha\}$ defines an element in $C^0(\mathcal{U}, T_X\otimes \mathscr{H}om(F,F))$. Then
\begin{enumerate}
\item $tS'^{\alpha}_{ij}=[\Lambda_0,\tilde{Y}'^{\alpha}_{ij}]+\sum_{k=1}^n \tilde{Y}'^{\alpha}_{ik}\wedge \tilde{Y}'^{\alpha}_{kj}=tS_{ij}^\alpha+t[\Lambda_0,Y'^\alpha_{ij}]+t(\sum_{k=1}^n Y'^{\alpha}_{ik}\wedge T^{\alpha}_{kj}+\sum_{k=1}^n Y'^{\alpha}_{ik}\wedge T_{kj}^\alpha) $ so that $S_{ij}'^\alpha-S_{ij}^\alpha=[\Lambda_0, Y_{ij}'^{\alpha}]+Y'^{\alpha}_{ik}\wedge T_{kj}^\alpha+\sum_{k=1}^n Y_{ik}'^\alpha\wedge T_{kj}^\alpha$. Hence $v_0(\{-Y'^\alpha\})=\{-S'^\alpha-(-S^\alpha)\}$.
\item  $tQ_{ij}'^{\alpha\beta}=-[\Lambda_0, \tilde{F}_{ij}'^{\alpha\beta}]+\sum_{k=1}^n \tilde{Y}_{ik}'^\alpha \tilde{F}_{kj}'^{\alpha\beta}-\sum_{k=1}^n \tilde{F}_{ik}'^{\alpha\beta} \tilde{Y}_{kj}'^\beta=tQ_{ij}^{\alpha\beta}+t(-[\Lambda_0, F_{ij}'^{\alpha\beta}]+\sum_{k=1}^n Y_{ik}'^\alpha F_{kj}^{\alpha\beta}+\sum_{k=1}^n T_{ik}^\alpha F_{kj}'^{\alpha\beta}-\sum_{k=1}^n F_{ik}'^{\alpha\beta}T_{kj}^\beta-\sum_{k=1}^n F_{ik}^{\alpha\beta} Y_{kj}'^\beta)$ so that $Q_{ij}'^{\alpha\beta}-Q_{ij}^{\alpha\beta}=\sum_{k=1}^n Y_{ik}'^\alpha F_{kj}^{\alpha\beta}-\sum_{k=1}^n F_{ik}^{\alpha\beta} Y_{kj}'^\beta-[\Lambda_0, F_{ij}'^{\alpha\beta}]+\sum_{k=1}^n T_{ik}^\alpha F_{kj}'^{\alpha\beta}-\sum_{k=1}^n F_{ik}'^{\alpha\beta}T_{kj}^\beta$. Hence $\delta(\{-Y'^\alpha\})+v_0(\{F'^{\alpha\beta}\})=\{Q'^{\alpha\beta}-Q^{\alpha\beta}\}$
\item $\delta_{ij}+t G_{ij}'^{\alpha\beta\gamma}=\sum_{k,p=1}^n \tilde{F'}_{ik}^{\alpha\beta}\tilde{F'}_{kp}^{\beta\gamma}\tilde{F'}_{pj}^{\gamma\alpha}=\delta_{ij}+tG_{ij}^{\alpha\beta\gamma}+t(\sum_{k,p=1}^n{F'}_{ik}^{\alpha\beta}F_{kp}^{\beta\gamma}F_{pj}^{\gamma\alpha}+\sum_{k,p=1}^n F_{ik}^{\alpha\beta}{F'}_{kp}^{\beta\gamma}F_{pj}^{\gamma\alpha}+\sum_{k,p=1}^nF_{ik}^{\alpha\beta}F_{kp}^{\beta\gamma}{F'}_{pj}^{\gamma\alpha})=\delta_{ij}+tG_{ij}^{\alpha\beta\gamma}+t(\sum_{k=1}^n {F'}_{ik}^{\alpha\beta} F_{kj}^{\beta\alpha}+\sum_{k,p=1}^nF_{ik}^{\alpha\beta}{F'}_{kp}^{\beta\gamma}F_{pj}^{\gamma\alpha}+\sum_{p=1}^n F_{ip}^{\alpha\gamma}F_{pj}'^{\gamma\alpha})$ so that $G'^{\alpha\beta\gamma}_{ij}-G^{\alpha\beta\gamma}_{ij}=\sum_{k=1}^n F_{ik}'^{\alpha\beta}F_{kj}^{\beta\alpha} +\sum_{k,p=1}^nF_{ik}^{\alpha\beta}{F'}_{kp}^{\beta\gamma}F_{pj}^{\gamma\alpha}+\sum_{p=1}^n F_{ip}^{\alpha\gamma}F_{pj}'^{\gamma\alpha}$. Hence $\delta(\{F'^{\alpha\beta}\})=\{G'^{\alpha\beta\gamma}_{ij}-G^{\alpha\beta\gamma}_{ij}\}$.
\end{enumerate}
Hence $(\{-Y'^\alpha\},\{F'^{\alpha\beta}\})\in \mathcal{C}^0(\mathcal{U},T_X\otimes \mathscr{H}om(F,F))\oplus \mathcal{C}^1(\mathcal{U},\mathscr{H}om(F,F))$ in the above \v{C}ech resolution is mapped to $b-a$ so that $a$ is cohomologous to $b$. So given a small extension $e:0\to (t)\to \tilde{A}\to A\to 0$ and an infinitesimal deformation $\eta$ of $(F,\nabla)$ over $A$, we can associate an element $o_\eta(e):=$ the cohomology class of $a\in \mathbb{H}^2(X,\Lambda_0,\mathscr{H}om(F,F)^\bullet,\nabla_{\mathscr{H}om(F,F)})$. We note that $o_\eta(e)=0$ if and only if there exists a collection of $\{\tilde{F}_{ij}^{\alpha\beta}\},\{\tilde{Y}_{ij}^{\alpha}\}$ defining an infinitesimal deformation over $\tilde{A}$ which induces $\eta$. This proves $(2)$ in Proposition \ref{ve2}.
\end{proof}

\bibliographystyle{amsalpha}
\bibliography{References-Rev9}

\end{document}